\documentclass{article}
\usepackage[english]{babel}
\usepackage[utf8]{inputenc}
\usepackage{johd}
\usepackage{mathptmx}
\usepackage{amsmath}
\usepackage{mathtools}
\usepackage{verbatim}
\usepackage{appendix}

\numberwithin{equation}{section}

\usepackage{amsmath,amsthm}
\usepackage{mathrsfs}
\usepackage{amssymb}
\usepackage{amsfonts}

\usepackage{cancel}
\usepackage{indentfirst}


\usepackage{graphicx}
\usepackage{subfigure}
\usepackage{wrapfig}
\usepackage{multicol}
\usepackage{tikz}

\usepackage[framemethod=TikZ]{mdframed}
\usepackage{color}
\usepackage{authblk}
\usepackage{textcomp}

\newtheorem{thm}{Theorem}[section]
\newtheorem{lemma}[thm]{Lemma}
\newtheorem{definition}[thm]{Definition}

\newtheorem{remark}[thm]{Remark}
\newtheorem{prop}[thm]{Proposition}

\newtheorem{corollary}[thm]{Corollary}

\author[2]{Zihao Gu\footnote{Email address: \url{zihao.gu@sjtu.edu.cn} (Z. Gu).}}
\author[1, 2]{Yiqing Lin\footnote{Email address: \url{yiqing.lin@sjtu.edu.cn} (Y. Lin).}}
\author[2]{Kun Xu\footnote{Corresponding author. Email address: \url{1949101x_k@sjtu.edu.cn} (K. Xu).}}
\affil[1]{\small{MOE-LSC, 
Shanghai Jiao Tong University, 200240 Shanghai, China.}}
\affil[2]{\small{School of Mathematical Sciences,
Shanghai Jiao Tong University, 200240 Shanghai, China.}}

\title{Exponential growth BSDE driven by a marked point process  }

\date{April 17, 2024}

\begin{document}

\maketitle

\begin{abstract}
In this study, we investigate the well-posedness of exponential growth backward stochastic differential equations (BSDEs) driven by a marked point process (MPP) under unbounded terminal conditions. Our analysis utilizes a fixed-point argument, the $\theta$-method, and an approximation procedure. Additionally, we establish the solvability of mean-reflected exponential growth BSDEs driven by the MPP using the $\theta$-method.
\end{abstract}

{\bf Keywords:} Exponential growth BSDEs, Marked point processes, Mean-reflected BSDEs

\textbf{MSC2020-classification:} 60G55; 60H10.

\tableofcontents

\section{Introduction}
In 1990, Pardoux and Peng \cite{pardoux1990adapted} first built the well-posedness of the nonlinear backward stochastic differential equations (BSDEs) driven by a Brownian motion with a Lipschitz continuous generator. Subsequently, numerous generalizations have been explored regarding the well-posedness of different types of BSDEs. Notably, quadratic BSDEs have garnered significant interest. Kobylanski \cite{kobylanski2000backward} constructed the existence and comparison theorem for quadratic BSDEs with a bounded terminal condition. Using a localization technique and some a priori estimates, Briand and Hu \cite{briand2006bsde} generalized the existence result to quadratic BSDEs with exponentially integrable terminals. Additionally, employing the $\theta$-method, Briand and Hu \cite{briand2008quadratic} proved a comparison theorem and stability results for quadratic BSDEs, assuming the generator $f$ is convex or concave in $z$, even when the terminal condition possesses an arbitrary order of exponential moment. Tevzadze \cite{tevzadze2008solvability} introduced a fixed-point approach to ascertain the solvability of quadratic BSDEs.

Apart from BSDEs within a Brownian framework, extending BSDEs to settings involving jumps broadens their applicability, such as in insurance modeling, as discussed by Liu and Ma \cite{Liu_2009}. Employing a fixed-point approach akin to that utilized in \cite{pardoux1990adapted}, Li and Tang \cite{Tang_1994}, and Barles, Buckdahn, and Pardoux \cite{barles1997backward} established the well-posedness for Lipschitz BSDEs with jumps (BSDEJs). Since then, numerous researchers have explored various types of BSDEJs. Notably, Becherer \cite{Becherer_2006} and Morlais \cite{morlais2010new} investigated BSDEJs driven by quadratic coefficients within an exponential utility maximization problem.
Moreover, Antonelli and Mancini \cite{antonelli2016solutions} constructed the well-posedness of BSDEJs with local Lipschitz drivers. \cite{Becherer_2006, morlais2010new, antonelli2016solutions} all adopted Kobylanski’s approach \cite{kobylanski2000backward} to the jump setting. In contrast, Cohen and Elliott \cite{cohen2015stochastic} and Kazi-Tani, Possama\"i, and Zhou \cite{kazi2015quadratic} employed the fixed-point approach of Tevzadze \cite{tevzadze2008solvability}. The well-posedness of BSDEJs with bounded terminals was established in these works. Furthermore, based on the stability of quadratic semimartingales, Barrieu and El Karoui \cite{Barrieu_2013} demonstrated the existence of a solution with an unbounded terminal under a quadratic structure condition in a continuous setup. To address BSDEJs, the quadratic structure condition was extended to a quadratic–exponential structure condition by Ngoupeyou \cite{ngoupeyou2010optimisation}, Jeanblanc, Matoussi \& Ngoupeyou \cite{jeanblanc2012robust}, and El Karoui, Matoussi \& Ngoupeyou \cite{karoui2016quadratic}. However, these results for unbounded terminals only provided existence without uniqueness. Recently, relying on the $\theta$-method, Kaaka\"i, Matoussi and Tamtalini \cite{kaakai2022utility} obtained the well-posedness of a special class of quadratic–exponential BSDEJs with unbounded terminal conditions. These were utilized in a robust utility maximization problem under several special structural conditions.

In this paper, going beyond the jump setting, we focus on a specific class of exponential growth BSDEs driven by a random measure associated with a marked point process (MPP), elucidating their characteristics.
\begin{equation}
\label{eq01}
Y_t= \xi + \int_t^T f\left(t, Y_s, U_s\right)dA_s-\int_t^T \int_E U_s(e) q(d sd e).
\end{equation}
Here, $q$ represents a compensated integer random measure corresponding to an MPP $(T_n,\zeta_n)_{n\ge 0}$, while $A$ denotes a continuous and increasing process. The well-posedness of BSDEs driven by general MPPs has been explored in various contexts: Confortola \& Fuhrman \cite{Confortola2013} for the weighted-$L^2$ solution, Becherer \cite{Becherer_2006} and Confortola \& Fuhrman \cite{Confortola_2014} for the $L^2$ case, Confortola, Fuhrman \& Jacod \cite{Confortola2016} for the $L^1$ case, and Confortola \cite{Confortola_2018} for the $L^p$ case. Additionally, Foresta \cite{foresta2021optimal} studied a more general BSDE featuring a Brownian motion diffusion term and a highly general MPP characterized by nonexplosive behavior and inaccessible jumps.

This paper extends the findings of \cite{briand2008quadratic} to establish the well-posedness of (\ref{eq01}) with an unbounded terminal condition when the generator $f$ exhibits convex or concave behavior in $u$. We adopt the exponential growth condition on $f$ as outlined in \cite{karoui2016quadratic}. To enhance clarity, we mainly focus on the well-posedness of (\ref{eq01}), which encompasses a pure jump diffusion term. However, the generalization of our result to BSDEs with a generator $f(t,y,z,u)$ under the quadratic–exponential assumption from literature such as \cite{karoui2016quadratic}, including a Brownian diffusion term, is straightforward. We establish uniqueness in the space $(Y,U)\in \mathcal E\times H_{\nu}^{2,p}$, for each $p\ge 1$, aligning to some extent with the results for quadratic BSDEs, as discussed in \cite{briand2008quadratic}. Compared to \cite{kaakai2022utility}, our result applies to a broader range of BSDEs. To address existence, drawing inspiration from \cite{foresta2021optimal}, we initially derive a uniform estimate for Lipschitz BSDEs under a linear bound condition, eliminating the $A_\gamma$ condition utilized in \cite{karoui2016quadratic}. Subsequently, we construct solutions for exponential growth BSDEs by approximating the generator with a family of Lipschitz generators. Unlike \cite{kazi2015quadratic}, we do not assume a bounded terminal condition or require the Fr\'echet differentiability condition on the generator. Additionally, our linear bound condition is less restrictive than the $A_{\gamma}$ condition used therein, enabling our results to apply under broader circumstances. For completeness, we provide a list of assumptions suitable for BSDEs with the generator $f(t,y,z,u)$.

We further extend our findings to include exponential growth BSDEs with mean reflection.
\begin{equation}
\label{eq001}
\left\{\begin{array}{l}
Y_t=\xi+\int_t^T f\left(s, Y_s, U_s\right) d A_s-\int_t^T \int_E U_s(e)q(dsde)+K_T-K_t, \quad 0 \leq t \leq T, \\
\mathbb{E}\left[\ell\left(t, Y_t\right)\right] \geq 0, \quad \forall t \in[0, T] \text { and } \int_0^T \mathbb{E}\left[\ell\left(t, Y_{t^-}\right)\right] d K_t=0.
\end{array}\right.
\end{equation}
Mean-reflected BSDEs were introduced by Briand, Elie \& Hu \cite{briand2018bsdes} in addressing superhedging problems under running risk management constraints. The basic structure of this problem is as follows:
\begin{equation}
\label{original}
\left\{\begin{array}{l}
 Y_t=\xi+\int_t^T f\left(s, Y_s, Z_s\right) d s-\int_t^T Z_s d W_s+K_T-K_t, \quad 0 \leq t \leq T, \\
\mathbb{E}\left[\ell\left(t, Y_t\right)\right] \geq 0, \quad 0 \leq t \leq T,
\end{array}\right.
\end{equation}
where $K$ is a deterministic process, and the running loss function $\left(\ell(t, \cdot)_{0 \leq t \leq T}\right.$ represents a set of nondecreasing bi-Lipschitz real-valued mappings. The Skorokhod condition we consider is of the following type.
$$
\int_0^T \mathbb{E}\left[\ell\left(t, Y_t\right)\right] d K_t=0,
$$
This condition ensures the existence and uniqueness of the deterministic flat solution. A generalization to a quadratic generator with a bounded terminal condition can be found in Hibon et al. \cite{Hibon2017quadratic}. Hu, Moreau and Wang \cite{hu2022general} established the well-posedness of general mean-reflected BSDEs with unbounded terminal conditions, where the driver $f$ takes the form $f\left(t, Y_t, P_{Y_{t}}, Z_t\right)$, with $P_{Y_{t}}$ representing the marginal probability distribution of the process $Y$ at time $t$.

Building upon the well-posedness of (\ref{eq01}), we employ the successive approximation procedure outlined in \cite{hu2022general} to establish the well-posedness of (\ref{eq001}) under exponential growth conditions. We assume that the terminal value possesses exponential moments of arbitrary order.

The remainder of the paper is structured as follows. In section \ref{section pre}, we introduce notation and present basic results for BSDEs driven by the MPP. Section \ref{section non reflected} investigates the well-posedness of (\ref{eq01}) with an exponential growth generator and an unbounded terminal. Finally, in section \ref{section mean reflected}, we explore the solvability of (\ref{eq001}) with mean reflection.

\section{Preliminaries}
\label{section pre}
In this section, we review some concepts related to MPPs and introduce certain assumptions. Throughout the paper, inequalities between random variables are assumed to hold $\mathbb P-a.s$. Further details regarding MPPs can be found in \cite{foresta2021optimal, Bremaud1981, last1995marked, cohen2012existence}.

Let $E$ denote the marked space, which is a Borel space, and $\mathcal{B}(E)$ represents its Borel $\sigma$-algebra. Consider a complete probability space $(\Omega, \mathscr{F}, \mathbb{P})$. Given a sequence of random variables $(T_n,\zeta_n)$ taking values in $[0,\infty]\times E$, we set $T_0=0$ and $\mathbb P\text{-a.s.}$.
\begin{itemize}
\item $T_n\le T_{n+1},\ \forall n\ge 0;$
\item $T_n<\infty$ implies $T_n<T_{n+1} \ \forall n\ge 0.$
\end{itemize}
The sequence $(T_n,\zeta_n)_{n\ge 0}$ is referred to as an MPP. Additionally, we assume that the MPP is nonexplosive, meaning $T_n\to\infty,\ \mathbb P\text{-a.s.}$.

We define a random discrete measure $p$ on $((0,+\infty) \times E, \mathscr{B}((0,+\infty) \times E))$ associated with the MPP.
\begin{equation}
\label{eq p}
  p(\omega, D)=\sum_{n \geq 1} \mathbf{1}_{\left(T_n(\omega), \zeta_n(\omega)\right) \in D}.
\end{equation}
For each $\tilde C \in \mathcal B(E)$, we define the counting process $N_t(\tilde C)=p((0, t] \times \tilde C)$ and denote $N_t=N_t(E)$. Both of these processes are right-continuous and increasing, starting from zero. For $t \geq 0$, we define
$$
\mathscr{F}_t^0=\sigma\left(N_s(\tilde C): s \in[0, t], \tilde C \in \mathcal{B}(E)\right)
$$
and $\mathscr{F}_t=\sigma\left(\mathscr{F}_t^0, \mathscr{N}\right)$, where $\mathscr{N}$ is the family of $\mathbb{P}$-null sets of $\mathscr{F}$. We denote by $\mathbb{F}=\left(\mathscr{F}_t\right)_{t \geq 0}$ the completed filtration generated by the MPP, which is right-continuous and satisfies the usual hypotheses\footnote{Given a standard Brownian motion $W\in \mathbb R^d$, independent of the MPP, to handle BSDEs with a Brownian diffusion term as discussed in \cite{foresta2021optimal}, it is natural to extend the filtration to $\mathbb G=(\mathcal G_t)$, the completed filtration generated by the MPP and $W$, which also satisfies the usual conditions.}.

Each MPP satisfying above conditions possesses a unique compensator $v$, which is a predictable random measure satisfying
$$
\mathbb{E}\left[\int_0^{+\infty} \int_E C_t(e) p(d t d e)\right]=\mathbb{E}\left[\int_0^{+\infty} \int_E C_t(e) v(d t d e)\right]
$$
for all $C$ which is nonnegative and $\mathscr{P}^{\mathbb{F}} \otimes \mathcal{B}(E)$-measurable, where $\mathscr{P}^{\mathbb{F}}$ denotes the $\sigma$-algebra generated by $\mathscr{F}$-predictable processes.

We are working within the stochastic basis defined above $(\Omega, \mathcal{F}, \mathbb{F}, \mathbb{P})$ with a finite time horizon $T<+\infty$. On this basis, let $p$ be the random measure defined earlier such that $\nu(\omega, d t, d e)=\phi_t(de)dA_t$, where $\phi_t(de)$ is a probability measure on $(E, \mathcal{B}(E))$, $\phi_t(\cdot)$ is predictable, and $A$ is the dual predictable projection of $N$.
In other words, the process $A$ is the unique right-continuous increasing process with $A_0=0$ such that for any non-negative predictable process $D$, it holds that
$$
\mathbb E\left[\int_{0}^\infty D_t dN_t\right]=\mathbb E\left[\int_{0}^\infty D_t dA_t\right].
$$

Fix a terminal time $T>0$, and we can define the integral
$$
\int_0^T \int_E C_t(e) q(d t d e)=\int_0^T \int_E C_t(e) p(d t d e)-\int_0^T \int_E C_t(e) \phi_t(d e) d A_t,
$$
under the condition
$$
\mathbb{E}\left[\int_0^T \int_E\left|C_t(e)\right| \phi_t(d e) d A_t\right]<\infty.
$$
Indeed, the process $\int_0^{\cdot} \int_E C_t(e) q(d t d e)$ is a martingale. Note that $\int_a^b$ denotes an integral on $(a, b]$ if $b<\infty$ or on $(a, b)$ if $b=\infty$.

Next, we introduce the following spaces.

\begin{itemize}
\item $\mathbb{L}^0$ denotes the space of all real-valued, $\mathcal{F}_T$-measurable random variables.
\item $\mathbb{L}^p \triangleq\left\{\xi \in \mathbb{L}^0 :\|\xi\|_p \triangleq\left\{E\left[|\xi|^p\right]\right\}^{\frac{1}{p}}<\infty\right\}$ for all $p \in[1, \infty)$.
\item $\mathbb{L}^{\infty}\triangleq\left\{\xi \in \mathbb{L}^0:\|\xi\|_{\infty} \triangleq {esssup}_{\omega \in \Omega}|\xi(\omega)|<\infty\right\}$.

\item $\mathscr{A}_D$ represents the space of all nondecreasing deterministic c\`adl\`ag processes $K$ starting from the origin, i.e., $K_0=0$.
\item ${S}^0$ denotes the set of real-valued, adapted, and c\`adl\`ag processes $\left\{Y_t\right\}_{t \in[0, T]}$.

\item For any $\left\{\ell_t\right\}_{t \in[0, T]} \in S^0$, define $\ell_*^{ \pm} \triangleq \sup _{t \in[0, T]}\left(\ell_t\right)^{ \pm}$. Then
$$
\ell_* \triangleq \sup _{t \in[0, T]}\left|\ell_t\right|=\sup _{t \in[0, T]}\left(\left(\ell_t\right)^{-} \vee\left(\ell_t\right)^{+}\right)=\sup _{t \in[0, T]}\left(\ell_t\right)^{-} \vee \sup _{t \in[0, T]}\left(\ell_t\right)^{+}=\ell_*^{-} \vee \ell_*^{+}.
$$

\item For any real $p \geq 1, {S}^p$ denotes the set of real-valued, adapted, and c\`adl\`ag processes $\left\{Y_t\right\}_{t \in[0, T]}$ such that
$$
\|Y\|_{{S}^p}:=\mathbb{E}\left[\sup _{0 \leq t \leq T}\left|Y_t\right|^p\right]^{1 / p}<+\infty.
$$
Then, $\left({S}^p,\|\cdot\|_{ {S}^p}\right)$ is a Banach space.
\item ${S}^{\infty}$ is the space of $\mathbb{R}$-valued c\`adl\`ag and $\mathbb F$-progressively measurable processes $Y$ such that
$$
\|Y\|_{{S}^\infty}:=\left\|\sup _{0 \leq t \leq T}|Y_t|\right\|_{\infty}<+\infty.
$$
\item For any $p \geq 1$, we denote by $\mathcal{E}^p$ the collection of all stochastic processes $Y$ such that $e^{|Y|} \in$ $S^p\left(0, T \right)$. We write $Y \in \mathcal{E}$ if $Y \in \mathcal{E}^p$ for any $p \geq 1$.
\item $L^{2}(A)$ is the space of all $\mathbb{F}$-progressively measurable processes $Y$ such that
$$
\|Y\|_{L^{2}(A)}^2=\mathbb{E}\left[\int_0^T \left|Y_s\right|^2 d A_s\right]<\infty.
$$
\item $\mathbb{H}^p$ is the space of $\mathbb{R}^d$-valued and $\mathbb{F}$-progressively measurable processes $Z$ such that
$$
\|Z\|_{\mathbb{H}^p}^p:=\mathbb{E}\left[\left(\int_0^T\left|Z_t\right|^2 d t\right)^{\frac{p}{2}}\right]<+\infty.
$$
\item $L^0\left(\mathscr{B}(E)\right)$ denotes the space of $\mathcal B(E)$-measurable functions. For $u \in L^0\left(\mathscr{B}(E)\right)$, define
$$
L^2(E,\mathcal{B}(E),\phi_t(\omega,dy)):=\left\{u \in L^0\left(\mathscr{B}(E)\right):\ \left\|u\right\|_t:=\left(\int_E\left|u(e)\right|^2  \phi_t(d e)\right)^{1 / 2}<\infty\right\}.
$$
\item $H^{2,p}_{\nu}$ is the space of all predictable processes $U$ such that
$$
\|U\|_{H_{\nu}^{2,p}}:=\left(\mathbb{E}\left[\int_{[0, T]} \int_E\left|U_s(e)\right|^2 \phi_s(de) d A_s\right ]^{\frac{p}{2}}\right)^{\frac{1}{p}}<\infty.
$$
\item $H^{2,loc}_{\nu}$ is the space of all predictable processes $U$ such that
$$
\int_{[0, T]} \int_E\left|U_s(e)\right|^2 \phi_s(de) d A_s<\infty,\ \mathbb P\text{-a.s.}
$$
Following \cite{Confortola2013}, we define $U, U^{\prime} \in H^{2,p}_{\nu}$ (respectively, $U, U^{\prime} \in H^{2,loc}_{\nu}$) as equivalent if they coincide almost everywhere with respect to the measure $\phi_t(\omega, d y) d A_t(\omega) \mathbb{P}(d \omega)$, and this holds if and only if $\left\|U-U^{\prime}\right\|_{H_{\nu}^{2,p}}=0$ (equivalently, 
$\int_E\left|U_s(e)-U^{\prime}_s(e)\right|^2 \phi_s(de) d A_s=0,\,\ \mathbb P\text{-a.s.}$). With a slight abuse of notation, we continue to denote $H^{2,p}_{\nu}$ (respectively, $H^{2,loc}_{\nu}$) as the corresponding set of equivalence classes. Additionally, $\left(H_{\nu}^{2,p},\|\cdot\|_{H_\nu^{2,p}}\right)$ forms a Banach space.
\item $\mathbb{J}^{\infty}$ is the space of functions such that
$$
\|\psi\|_{\mathbb{J}^\infty}:=\left\| \left\|\psi\right\|_{{L}^{\infty}(\nu(\omega))}\right\|_{\infty}<\infty.
$$
\item $\mathcal S_{0,T}$ denotes the collection of $\mathbb F$-stopping times $\tau$ such that $0\le\tau\le T,\ \mathbb P\text{-a.s.}$. For any $\tau\in\mathcal S_{0,T}$, $\mathcal S_{\tau,T}$ denotes the collection of $\mathbb F$-stopping times $\tilde\tau$ such that $\tau\le\tilde\tau\le T,\ \mathbb P\text{-a.s.}$
\end{itemize}

For any $u \in L^0\left(\mathcal{B}(E)\right)$ and $\lambda >0$, we define a positive predictable process as
$$
j_\lambda (\cdot, u)=\int_E (\mathrm{e}^{\lambda u(e)}-1-\lambda u(e))\phi_{\cdot}(de).
$$

Without loss of generality, we focus on BSDEs of the form:
\begin{equation}
\label{BSDE}
Y_t= \xi + \int_t^T f\left(t, Y_s, U_s\right)dA_s-\int_t^T \int_E U_s(e) q(d sd e).
\end{equation}

The BSDE (\ref{BSDE}) with coefficient $(\xi,f)$ is referred to as BSDE $(\xi,f)$.
Subsequently, we outline the general assumptions consistently employed in this paper.
\hspace*{\fill}\\
\hspace*{\fill}\\
\noindent(\textbf{H1}) The process A is continuous, with $\|A_T\|_\infty<\infty$.
\hspace*{\fill}\\

The aforementioned assumption relies on the dual predictable projection $\nu$ of the counting process $N$ associated with $p$. It is worth noting that for $A_t$, absolute continuity with respect to the Lebesgue measure is not required. In simpler terms, the compensator $v$ does not permit a decomposition $v_t(\omega,dt,dx)=\xi(\omega,t,x)\lambda(dx)dt$ as illustrated in \cite{karoui2016quadratic}. Moreover, thanks to \cite[Corollary 5.28]{he2019semimartingale}, the process $p$ is totally inaccessible, i.e., $\{T_n\}_{n\ge 1}$ are totally inaccessible stopping times.
\hspace*{\fill}\\
\hspace*{\fill}\\
\hspace*{\fill}\\
\noindent(\textbf{H2}) For every $\omega \in \Omega,\ t \in[0, T],\ r \in \mathbb{R}$, the mapping
$
f(\omega, t, r, \cdot):L^2(E,\mathcal{B}(E),\phi_t(\omega,dy))\rightarrow \mathbb{R}
$
satisfies:
For every $U \in {H_\nu^{2,2}}$,
$$
(\omega, t, r) \mapsto f\left(\omega, t, r, U_t(\omega, \cdot)\right)
$$
is Prog $\otimes \mathscr{B}(\mathbb{R})$-measurable.

\hspace*{\fill}\\
\hspace*{\fill}\\
\noindent(\textbf{H3})

\textbf{(a) (Continuity condition)}
For every $\omega \in \Omega, t \in[0, T], y \in \mathbb{R}$, $u\in L^2(E,\mathcal{B}(E),\phi_t(\omega,dy))$, $(y, u) \longrightarrow f(t, y, u)$ is continuous.
\hspace*{\fill}\\
\hspace*{\fill}\\

\textbf{(b) (Lipschitz condition in $y$)}
There exists $\beta\geq 0$ such that for every $\omega \in \Omega,\ t \in[0, T],\ y, y^{\prime} \in \mathbb{R}$,\ $u\in L^2(E,\mathcal{B}(E),\phi_t(\omega,dy))$, the following holds
$$
\begin{aligned}
& \left|f(\omega, t, y, u(\cdot))-f\left(\omega, t, y^{\prime}, u(\cdot)\right)\right| \leq \beta\left|y-y^{\prime}\right|.
\end{aligned}
$$
\hspace*{\fill}\\

\textbf{(c) (Exponential growth condition)}
For all $t \in[0, T], \ (y,u) \in \mathbb{R} \times L^2(E,\mathcal{B}(E),\phi_t(\omega,dy)):\ \mathbb{P}$-a.s, there exists $\lambda>0$ such that
$$
\underline{q}(t, y, u)=-\frac{1}{\lambda} j_{\lambda}(t,- u)-\alpha_t-\beta|y| \leq f(t, y, u) \leq \frac{1}{\lambda} j_{\lambda}(t, u)+\alpha_t+\beta|y|=\bar{q}(t, y, u),
$$
where $\{\alpha_t\}_{0 \leq t \leq T}$ is a progressively measurable non-negative stochastic process.

\hspace*{\fill}\\
\hspace*{\fill}\\

\textbf{(d) (Integrability condition)}
We assume that
$$
\forall\ p>0, \quad \mathbb{E}\left[\exp \left(p\left(\left|\xi\right|+\int_0^T \alpha_s d A_s\right)\right)\right]<+\infty.
$$
\hspace*{\fill}\\

\textbf{(e) (Convexity/Concavity condition)}
For each $(t, y) \in [0, T] \times \mathbb{R}$, $u\in L^2(E,\mathcal{B}(E),\phi_t(\omega,dy)),\ u \rightarrow f(t, y, u)$ is either convex or concave.

\section{Exponential growth BSDE}
\label{section non reflected}
We aim to obtain a unique solution $(Y, U)$ in the space $\mathcal E\times H_{\nu}^{2,p}$, for all $p\ge 1$, to the BSDE (\ref{BSDE}).
To establish the well-posedness of the exponential growth BSDE (\ref{BSDE}), following the approach in \cite{karoui2016quadratic}, we begin by restating the well-posedness condition with a Lipschitz generator as introduced in \cite{foresta2021optimal}. Subsequently, we apply an approximation procedure.

\subsection{Lipschitz BSDE}

 The following well-posed results for Lipschitz BSDEs driven by an MPP are adapted from \cite{foresta2021optimal}. We present the result under our assumptions for subsequent approximation.

Specifically, the additional assumptions are as follows:
\hspace*{\fill}\\
\hspace*{\fill}\\
\noindent(\textbf{H3'})

(\textbf{a})
 The final condition $\xi: \Omega \rightarrow \mathbb{R}$ is $\mathscr{F}_T$-measurable, and
$$\mathbb E[\xi^2]<\infty.$$
\hspace*{\fill}\\
\hspace*{\fill}\\

\textbf{(b)}
There exist $L_f \geq 0,\ L_U \geq 0$ such that for every $\omega \in \Omega,\ t \in[0, T],\ y, y^{\prime} \in \mathbb{R}$,\ $u, u^{\prime} \in L^2(E,\mathcal{B}(E),\phi_t(\omega,dy))$, the following holds
$$
\left|f(\omega, t, y, u(\cdot))-f\left(\omega, t, y^{\prime}, u^{\prime}(\cdot)\right)\right| \leq L_f\left|y-y^{\prime}\right|
+L_U\left(\int_E\left|u(e)-u^{\prime}(e)\right|^2 \phi_t(\omega, d e)\right)^{1 / 2}.
$$
\hspace*{\fill}\\
\hspace*{\fill}\\

\textbf{(c)}
We assume
$$
\mathbb{E}\left[\int_0^T |f(s, 0,0)|^2 d A_s\right]<\infty.
$$

The well-posedness result for the Lipschitz BSDE (\ref{BSDE}) is as follows, based on \cite[Theorem 4.1]{foresta2021optimal} and the a priori estimate \cite[Lemma 3.2]{foresta2021optimal}.

\begin{thm}
\label{thm_lip}
If assumptions (H1), (H2), and (H3') hold, then there exists a unique solution $(Y,U)\in L^2(A)\times H_{\nu}^{2,2}$ to (\ref{BSDE}). In addition, $Y\in S^2$.
\end{thm}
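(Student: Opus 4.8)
\subsection*{Proof strategy}

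The plan is to prove existence and uniqueness by a Picard fixed-point argument in the Banach space $L^2(A)\times H_\nu^{2,2}$ equipped with a $\mu$-weighted norm, relying on the martingale representation property of the MPP filtration and on the a priori estimate of \cite[Lemma 3.2]{foresta2021optimal}. First I would record the a priori/stability estimate: if $(Y,U)\in L^2(A)\times H_\nu^{2,2}$ solves BSDE $(\xi,f)$ under (H1), (H2), (H3'), then applying It\^o's formula to $e^{\mu A_t}|Y_t|^2$ for a constant $\mu$ depending on $L_f,L_U$, taking expectations (the $q$-integral is a genuine martingale since $U\in H_\nu^{2,2}$ and $\|A_T\|_\infty<\infty$), and using Young's inequality to absorb the Lipschitz contributions $L_f|Y_s|+L_U\|U_s\|_s$, one obtains
\[
\|Y\|_{L^2(A)}^2+\|U\|_{H_\nu^{2,2}}^2\le C\left(\mathbb E[\xi^2]+\mathbb E\Big[\int_0^T|f(s,0,0)|^2\,dA_s\Big]\right),
\]
with $C$ depending only on $L_f,L_U,\|A_T\|_\infty$. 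Applied to the difference of two solutions with identical data this already yields uniqueness; applied to a frozen problem it produces the stability bound driving the contraction.

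For existence I would set up the fixed-point map as follows. Given $(y,u)\in L^2(A)\times H_\nu^{2,2}$, the process $g_s:=f(s,y_s,u_s)$ is progressively measurable by (H2), and by (H3')(b),(c) together with $\|A_T\|_\infty<\infty$ we have $|g_s|\le|f(s,0,0)|+L_f|y_s|+L_U\|u_s\|_s$, so $g\in L^2(A)$. Hence $M_t:=\mathbb E\big[\xi+\int_0^T g_s\,dA_s\mid\mathcal F_t\big]$ is a square-integrable martingale which, by the martingale representation theorem for the MPP (see \cite{Confortola2013,foresta2021optimal}), can be written $M_t=M_0+\int_0^t\int_E U_s(e)\,q(ds\,de)$ with $U\in H_\nu^{2,2}$; setting $Y_t:=M_t-\int_0^t g_s\,dA_s$ gives a solution of the frozen BSDE and thus a well-defined map $\Phi(y,u)=(Y,U)$ on $L^2(A)\times H_\nu^{2,2}$. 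The difference $\Phi(y,u)-\Phi(y',u')$ solves a BSDE with zero terminal value and driver $g_s-g'_s$, bounded by $L_f|y_s-y'_s|+L_U\|u_s-u'_s\|_s$; feeding this into the stability estimate above and choosing the weight $e^{\mu A_t}$ with $\mu$ large enough makes $\Phi$ a strict contraction on the whole of $[0,T]$ (alternatively one iterates over short subintervals, which is always possible because $A$ is continuous). The unique fixed point $(Y,U)$ is the desired solution, and $Y\in L^2(A)$, $U\in H_\nu^{2,2}$ by construction.

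Finally, to upgrade to $Y\in S^2$, I would use $Y_t=\mathbb E\big[\xi+\int_t^T f(s,Y_s,U_s)\,dA_s\mid\mathcal F_t\big]$ and Doob's maximal inequality, or equivalently apply It\^o's formula to $|Y_t|^2$ and the Burkholder--Davis--Gundy inequality to $\int_0^\cdot\int_E U_s(e)\,q(ds\,de)$, to bound $\mathbb E[\sup_{0\le t\le T}|Y_t|^2]$ by the right-hand side of the a priori estimate, which is finite by (H3')(a),(c); hence $Y\in S^2$.

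The main obstacle I anticipate is invoking the martingale representation theorem in the precise form valid for this filtration and verifying that the representing integrand automatically belongs to $H_\nu^{2,2}$ (and not merely $H_\nu^{2,loc}$), together with the bookkeeping forced by the fact that $A$ is only continuous and increasing --- not absolutely continuous with respect to Lebesgue measure --- so all Gronwall-type estimates must be carried out against $dA$, with $\|A_T\|_\infty<\infty$ used crucially to keep the constants uniform.
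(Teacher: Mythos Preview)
Your proposal is correct and follows the standard Pardoux--Peng contraction approach adapted to the MPP setting. Note that the paper does not actually prove this theorem: it simply cites \cite[Theorem 4.1]{foresta2021optimal} together with the a priori estimate \cite[Lemma 3.2]{foresta2021optimal}, and your sketch is precisely the argument carried out in that reference.
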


Subsequently, we employ the approximation method outlined in \cite{karoui2016quadratic} to construct the well-posed exponential growth BSDE (\ref{BSDE}).

\subsection{A priori estimates for exponential growth BSDEs }
We begin to investigate the solutions to BSDE(\ref{BSDE}), which are defined as follows.
\begin{definition}[Solution to the BSDE]
\label{def solution}
Under assumptions (H1)-(H3), a solution to BSDE (\ref{BSDE}) is a couple process $(Y,U)$ on $[0,T]$, in which $Y$ is a c\`adl\`ag process,  and $U$ is an $\mathbb F$-predictable random field. Moreover, for each $p\ge 1$, processes $\int_0^{\cdot}\int_E(e^{p\lambda U_t(e)}-1)q(dsde)$ and 
$\int_0^{\cdot}\int_E(e^{p\lambda U_t(e)}-1)q(dsde)$ are local martingales on $[0,T]$.

\end{definition}

\subsubsection{A priori estimate on $Y$}
The following a priori estimate for exponential growth BSDEs driven by an MPP is pivotal. The proof of Lemma \ref{submartingle property} is presented in Appendix \ref{appexdix A}. The concept was inspired by \cite[Proposition 3.3]{karoui2016quadratic}.

\begin{lemma}
\label{submartingle property}
 Under assumptions (H1) and (H2), let $(y, u)$ be a solution to BSDE (\ref{BSDE}). Assume that for some $p\ge 1$,
\begin{equation}
\label{integrability_condition}
\mathbb{E}\left[\exp \left\{pe^{\beta A_T}\lambda|\xi|+ p\lambda \int_0^Te^{\beta A_t} \alpha_t d t\right\}\right]<\infty.
\end{equation}
Then, the following a priori estimates hold.

(i) If $-\alpha_t-\beta|y_t|-\frac{1}{\lambda}j_\lambda(t,-u_t)\le f(t,y, u) \leq \alpha_t+\beta|y_t|+\frac{1}{\lambda}j_\lambda(t,u_t)$, then for each $t \in[0, T]$,
\begin{equation}
\label{eY bound}
\exp \left\{p \lambda\left|y_t\right|\right\} \leq \mathbb{E}_t\left[\exp \left\{p \lambda e^{\beta A_T}|\xi|+p\lambda \int_t^T e^{\beta A_s}\alpha_s d A_s\right\}\right].
\end{equation}
Therefore, considering Doob’s inequality, for every $p>0$, $E[e^{py_*}]$ remains uniformly bounded. This bound is denoted by $\Xi(p,\alpha,\beta)$.

(ii) If $f(t,y,u) \leq \alpha_t+\beta |y_t|+\frac{1}{\lambda}j_{\lambda}(t,u_t)$, then for each $t \in[0, T]$,
$$
\exp \left\{p \lambda\left(y_t\right)^{+}\right\} \leq \mathbb{E}_t\left[\exp \left\{p\lambda e^{\beta A_T} \xi^{+}+p\lambda \int_t^T e^{\beta A_s}\alpha_s d A_s\right\}\right].
$$
\end{lemma}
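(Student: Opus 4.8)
The plan is to exhibit an explicit exponential functional of the solution that is a \emph{local submartingale} and then read off the estimate by conditioning. Throughout write $\rho_t=e^{\beta A_t}$, which by (H1) and $\beta\ge0$ is continuous, bounded, $\ge1$, and satisfies $d\rho_t=\beta\rho_t\,dA_t$; set $c_t=p\lambda\rho_t$ and $k_t=c_t/\lambda=p\rho_t\ge1$. For (i) I would work with
$$\Phi_t=\exp\Big\{c_t\,|y_t|+p\lambda\int_0^t\rho_s\alpha_s\,dA_s\Big\},$$
and for (ii) with the same process after replacing $|y_t|$ by $(y_t)^+$ and $\xi$ by $\xi^+$. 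Because $\rho_t\ge1$, establishing the submartingale inequality $\Phi_t\le\mathbb E_t[\Phi_T]$ immediately gives (\ref{eY bound}) and its one-sided analogue, once the $\mathscr F_t$-measurable factor $\exp\{p\lambda\int_0^t\rho_s\alpha_s\,dA_s\}$ is divided out.

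First I would expand $\Phi$ by the Meyer--Itô formula. As $A$ is continuous and there is no Brownian part, $y$ carries no continuous martingale component, so the only second-order effects are the jumps of $y$ (of size $\Delta y_s=u_s$) and the local time of $y$ at $0$; the latter enters with a nonnegative coefficient and only helps. The delicate bookkeeping point is that the jump compensator of $\Phi$ is $\int_E[F(y_{s-}+u_s(e))-F(y_{s-})-F'(y_{s-})u_s(e)]\phi_s(de)$ with $F(x)=e^{c_s|x|}$ (resp. $e^{c_sx^+}$) --- the linear correction is $F'(y_{s-})u_s$, not $F(y_{s-})$ times the jump of the exponent. After dividing the drift coefficient by $\Phi_{s-}$ one obtains, on $\{y_s>0\}$, the continuous part $c_s\alpha_s+c_s\beta y_s-c_sf_s$ plus $\int_E(e^{c_s(|y_s+u_s|-y_s)}-1-c_su_s)\phi_s(de)$. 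Using the upper bound $f_s\le\alpha_s+\beta y_s+\tfrac1\lambda j_\lambda(s,u_s)$ (here $\rho_s$ is exactly what cancels $\beta y_s$) the continuous part is $\ge-k_s j_\lambda(s,u_s)$; since $|y_s+u_s|\ge y_s+u_s$ the jump integrand dominates $e^{c_su_s}-1-c_su_s$, so the jump integral is $\ge j_{c_s}(s,u_s)$. The elementary inequality $g(kv)\ge k\,g(v)$ for $k\ge1$, $g(v)=e^v-1-v$ (immediate from convexity of $g$ and $g(0)=0$), gives $j_{c_s}(s,\cdot)=j_{k_s\lambda}(s,\cdot)\ge k_s j_\lambda(s,\cdot)$, whence the total drift is $\ge0$. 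On $\{y_s<0\}$ I would run the identical computation using instead the lower bound $f_s\ge-\alpha_s-\beta|y_s|-\tfrac1\lambda j_\lambda(s,-u_s)$ and the inequality $|y_s+u_s|\ge-(y_s+u_s)$; for (ii) the set $\{y_s\le0\}$ is trivial, since there $(y_s)^+=0$ leaves only the nonnegative $c_s\alpha_s+\int_E(e^{c_s(y_s+u_s)^+}-1)\phi_s(de)$. Thus $\Phi$ is a local submartingale, and only the upper bound on $f$ is used for (ii).

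The main obstacle is upgrading the stopped inequality $\Phi_{t\wedge\tau_n}\le\mathbb E_t[\Phi_{T\wedge\tau_n}]$, valid along a reducing sequence $\tau_n\uparrow\infty$ furnished by Definition \ref{def solution}, to $\Phi_t\le\mathbb E_t[\Phi_T]$. By (\ref{integrability_condition}) the terminal value $\Phi_T$ is integrable, so $M_t:=\mathbb E_t[\Phi_T]$ is a closed martingale and $\Phi_{T\wedge\tau_n}\to\Phi_T$ a.s.; what is missing is uniform integrability of $\{\Phi_{T\wedge\tau_n}\}_n$, without which the limit cannot be passed. This is precisely where the all-order integrability in (H3)(d) enters: repeating the construction with a strictly larger exponent $p'>p$ produces, through the associated closed martingale and Doob's inequality, an integrable majorant that dominates the $p$-version uniformly in $n$, justifying the passage to the limit. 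I expect this integrability/uniform-integrability step to be the technical heart of the argument; everything else is the deterministic drift estimate above.

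Finally, from $\Phi_t\le M_t$ I divide by the $\mathscr F_t$-measurable factor to get $\exp\{c_t|y_t|\}\le\mathbb E_t[\exp\{p\lambda\rho_T|\xi|+p\lambda\int_t^T\rho_s\alpha_s\,dA_s\}]$, and $\rho_t\ge1$ yields $\exp\{p\lambda|y_t|\}\le\exp\{c_t|y_t|\}\le$ the right-hand side, i.e. (\ref{eY bound}); the identical argument with $(y_t)^+$ and $\xi^+$ (using only the upper bound) gives (ii). The uniform bound $\Xi(p,\alpha,\beta)$ on $\mathbb E[e^{py_*}]$ then follows by applying Doob's maximal inequality to $M$ at each order, again invoking (H3)(d).
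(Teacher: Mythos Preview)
Your approach is essentially the same as the paper's: both build the exponential functional $\exp\{e^{\beta A_t}\lambda|y_t|+\lambda\int_0^t e^{\beta A_s}\alpha_s\,dA_s\}$, show via the Meyer--It\^o formula that its drift is nonnegative using the growth bound on $f$ together with the convexity inequality $j_\lambda(k u)\ge k\,j_\lambda(u)$ for $k\ge1$ and the elementary fact $|y+u|-|y|\ge\mathrm{sign}(y)\,u$, and then read off the estimate by conditioning; part (ii) is handled identically with $y^+$ in place of $|y|$. The only cosmetic difference is that the paper carries out the computation with $p=1$ and then upgrades to general $p\ge1$ by Jensen's inequality, whereas you insert $p$ into the exponent from the outset (your $c_t=p\lambda\rho_t$). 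One small caution: the lemma is stated under (H1)--(H2) and the single integrability hypothesis (\ref{integrability_condition}) for the given $p$, so your appeal to the all-order condition (H3)(d) in the uniform-integrability step goes slightly beyond the stated assumptions; the paper is equally informal at this point, simply asserting the submartingale property ``considering the integrability condition on $y$ along with (\ref{integrability_condition})''.
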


The following corollary represents a degenerate case of Lemma \ref{submartingle property}.
\begin{corollary}
\label{a priori esti 2}
 Under assumptions (H1) and (H2), let $(y, u)$ be a solution to a simplified version of BSDE (\ref{BSDE}) with $f(t,y,u)\equiv f(t,u)$. Suppose there exists a constant $p \geq 1$ such that
\begin{equation}
\mathbb{E}\left[\exp \left\{p\lambda|\xi|+ p\lambda \int_0^T \alpha_t d t\right\}\right]<\infty.
\end{equation}
Then, we have

(i) If $-\alpha_t-\frac{1}{\lambda}j_\lambda(t,-u_t)\le f(t, u) \leq \alpha_t+\frac{1}{\lambda}j_\lambda(t,u_t)$, then for each $t \in[0, T]$,
$$
\exp \left\{p \lambda\left|y_t\right|\right\} \leq \mathbb{E}_t\left[\exp \left\{p \lambda |\xi|+p\lambda \int_t^T \alpha_s d A_s\right\}\right].
$$

(ii) If $f(t,u) \leq \alpha_t+\frac{1}{\lambda}j_{\lambda}(t,u_t)$, then for each $t \in[0, T]$,
$$
\exp \left\{p \lambda\left(y_t\right)^{+}\right\} \leq \mathbb{E}_t\left[\exp \left\{p\lambda \xi^{+}+p\lambda \int_t^T \alpha_s d A_s\right\}\right].
$$
\end{corollary}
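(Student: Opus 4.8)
The plan is to obtain Corollary \ref{a priori esti 2} as a direct specialization of Lemma \ref{submartingle property}, which we are permitted to invoke. The key observation is that when the generator does not depend on $y$, i.e., $f(t,y,u) \equiv f(t,u)$, we may take $\beta = 0$ in the setting of Lemma \ref{submartingle property}. With $\beta = 0$, the exponential weight $e^{\beta A_t}$ collapses to $1$, so the integrability hypothesis \eqref{integrability_condition} reduces exactly to $\mathbb{E}\big[\exp\{p\lambda|\xi| + p\lambda\int_0^T \alpha_t\,dt\}\big] < \infty$, matching the hypothesis stated in the corollary. Likewise the two-sided growth bound in part (i) of the lemma, namely $-\alpha_t - \beta|y_t| - \tfrac{1}{\lambda}j_\lambda(t,-u_t) \le f(t,y,u) \le \alpha_t + \beta|y_t| + \tfrac{1}{\lambda}j_\lambda(t,u_t)$, becomes precisely $-\alpha_t - \tfrac{1}{\lambda}j_\lambda(t,-u_t) \le f(t,u) \le \alpha_t + \tfrac{1}{\lambda}j_\lambda(t,u_t)$.

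First I would record that Lemma \ref{submartingle property} applies to $(y,u)$ with the choice $\beta = 0$: assumptions (H1) and (H2) are inherited unchanged, and the $y$-independent generator trivially satisfies the required growth control with $\beta=0$. Then I would apply part (i) of the lemma: the conclusion \eqref{eY bound}, with $e^{\beta A_T} = 1$ and $e^{\beta A_s} = 1$, reads
$$
\exp\{p\lambda|y_t|\} \le \mathbb{E}_t\Big[\exp\Big\{p\lambda|\xi| + p\lambda \int_t^T \alpha_s\,dA_s\Big\}\Big],
$$
which is exactly statement (i) of the corollary. For statement (ii) I would apply part (ii) of Lemma \ref{submartingle property} in the same way: the one-sided bound $f(t,u) \le \alpha_t + \tfrac{1}{\lambda}j_\lambda(t,u_t)$ is the $\beta=0$ case of $f(t,y,u) \le \alpha_t + \beta|y_t| + \tfrac{1}{\lambda}j_\lambda(t,u_t)$, and the resulting estimate
$$
\exp\{p\lambda(y_t)^+\} \le \mathbb{E}_t\Big[\exp\Big\{p\lambda\,\xi^+ + p\lambda \int_t^T \alpha_s\,dA_s\Big\}\Big]
$$
is statement (ii).

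Since this is a pure specialization, there is essentially no obstacle: the only point requiring a line of justification is that taking $\beta=0$ is legitimate, i.e., that the hypotheses of Lemma \ref{submartingle property} with $\beta = 0$ are genuinely weaker than (or at least compatible with) what is assumed here — which is immediate because the $y$-independent structure makes the Lipschitz-in-$y$ constant vanish and the growth bounds are stated with $\beta$ as a free nonnegative parameter. I would therefore write the proof in two or three sentences, explicitly noting the substitution $\beta = 0$ and citing Lemma \ref{submartingle property}(i) and (ii) for the two claims respectively, and remarking that the integrability condition \eqref{integrability_condition} degenerates to the displayed hypothesis of the corollary. One small bookkeeping remark worth including: the hypothesis writes $\int_0^T \alpha_t\,dt$ while the conclusions and the lemma use $\int_t^T \alpha_s\,dA_s$; under (H1) the process $A$ is the relevant integrator throughout and I would keep the $dA_s$ notation in the conclusion consistent with the lemma, treating the $dt$ in the hypothesis as the same convention used in the statement of Lemma \ref{submartingle property}.
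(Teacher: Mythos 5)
Your proposal is correct and coincides with the paper's own treatment: the paper states this corollary as a degenerate case of Lemma \ref{submartingle property} with no separate argument, which is exactly your specialization $\beta=0$ (the $y$-independent generator makes the Lipschitz constant and the $\beta|y|$ terms vanish, so the integrability hypothesis and both estimates reduce to the stated forms). Your bookkeeping remark about $dt$ versus $dA_s$ mirrors the same notational convention already present in the lemma, so nothing further is needed.
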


\subsubsection{A priori estimate on $U$}
We then establish a priori estimate of $U$. The proof is also included in Appendix \ref{appexdix A} for completeness.

\begin{prop}
\label{priori estimate on U and K}
Consider $(\xi,f)$ as parameters satisfying assumptions (H1)–(H3). If $(Y,U)$ is a solution of the BSDE (\ref{BSDE}) with $Y\in\mathcal E$, then
\begin{equation}
\label{U bound}
\mathbb E\left[\left(\int_0^T\int_E|U_t(e)|^2\phi_t(de)dA_t\right)^{p/2}\right]\le C_p\mathbb E\left[e^{8p\lambda(1+\beta\|A_T\|_\infty) Y_*}\right]\leq C_p\Xi(8p\lambda(1+\beta\|A_T\|_\infty),\alpha,\beta)<\infty.
\end{equation}
In fact, the following stronger estimate for $U$ also holds: for each $p,q\ge 1$,
\begin{equation}
\label{e|u|q01}
\mathbb E\left[\left(\int_0^T\int_E\left(e^{q \lambda |U_t(e)|}-1\right)^2 \phi_t\left(de\right) d A_t\right)^p\right]\le  C_p\mathbb E\left[e^{8pq\lambda (1+\beta\|A_T\|_\infty)Y_*}\right]\le C_p\Xi(8pq\lambda(1+\beta\|A_T\|_\infty),\alpha,\beta)<\infty.
\end{equation}
\end{prop}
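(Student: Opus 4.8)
The plan is to apply It\^o's formula for jump processes to a suitable convex function of $U$, exploiting the exponential growth bounds $\underline q \le f \le \bar q$ from (H3)(c) to control the $dA$-integral, and then to absorb everything into the known exponential moment bound $\Xi$ on $Y_*$ via the a priori estimate of Lemma \ref{submartingle property}. First I would fix $p \ge 1$ and, working on a localizing sequence of stopping times $\tau_n \uparrow T$ that reduces the local martingales $\int_0^\cdot \int_E (e^{p\lambda U_s(e)}-1)q(dsde)$ (which exist by Definition \ref{def solution}), write down the dynamics of $e^{\beta A_t}|Y_t|$ or, more directly, of the process $Y$ itself, and apply the It\^o/Dol\'eans formula to $x \mapsto e^{c|x|}$ (or to $(Y)^\pm$) on $[t,\tau_n]$. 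The jump part produces, after taking conditional expectations, a term of the form $\int_t^{\tau_n} \int_E (e^{c\lambda U_s(e)} - 1 - c\lambda U_s(e))\,\phi_s(de)\,dA_s = \int_t^{\tau_n} j_{c\lambda}(s,U_s)\,dA_s$, i.e. precisely the quantity $j_\lambda$ appearing in (H3)(c); this is the structural identity that makes the estimate work.

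The key steps in order: (1) Use It\^o on $e^{\beta A_t}|Y_t|$ together with the two-sided bound on $f$ to show that the "bad" generator contribution $\int_t^T e^{\beta A_s}\frac1\lambda j_\lambda(s,\pm U_s)\,dA_s$ is dominated, after rearrangement, by boundary terms involving $e^{c\lambda Y_*}$ plus a genuine martingale; here one uses $\|A_T\|_\infty < \infty$ from (H1) to convert $e^{\beta A_s}$ factors into constants, which is where the factor $(1+\beta\|A_T\|_\infty)$ enters. (2) Deduce a bound of the form $\mathbb E\big[\int_0^T\int_E j_{\lambda}(s,U_s)\,dA_s\big] \le C\,\mathbb E[e^{c\lambda(1+\beta\|A_T\|_\infty)Y_*}]$ for an explicit constant $c$ (tracking constants carefully gives $c=8$). (3) Compare $j_\lambda(s,U_s)$ with $\int_E |U_s(e)|^2\phi_s(de)$ and with $\int_E(e^{q\lambda|U_s(e)|}-1)^2\phi_s(de)$: on the set $\{|U|\le 1\}$ one has $e^x - 1 - x \gtrsim x^2$ and $(e^{qx}-1)^2 \lesssim x^2 \lesssim e^x-1-x$, while on $\{|U|>1\}$ one has $(e^{q\lambda|u|}-1)^2 \lesssim e^{2q\lambda|u|} \lesssim e^{Cq\lambda|u|}-1-Cq\lambda|u| \le j_{Cq\lambda}(s,U_s)$ for suitable $C$; combining the regimes yields the pointwise-in-$(s,\omega)$ domination needed for both \eqref{U bound} and \eqref{e|u|q01}. (4) Raise everything to the power $p$: for \eqref{U bound} apply the Burkholder--Davis--Gundy inequality to the martingale part so that the $L^p$ norm of $\big(\int_0^T\int_E|U|^2\phi\,dA\big)^{1/2}$ is controlled by the $L^p$ norm of the boundary terms, i.e. by $\mathbb E[e^{8p\lambda(1+\beta\|A_T\|_\infty)Y_*}]$; for \eqref{e|u|q01} one repeats the argument with $\lambda$ replaced by $q\lambda$, which only rescales the exponent. (5) Finally invoke Lemma \ref{submartingle property}(i): since $Y\in\mathcal E$, $\mathbb E[e^{rY_*}]<\infty$ for every $r>0$ and is bounded by $\Xi(r,\alpha,\beta)$, so the right-hand sides are finite.

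The main obstacle I anticipate is step (1)--(2): making the It\^o-formula computation rigorous despite the local martingale (not martingale) nature of the stochastic integrals, and organizing the absorption so that no term involving $U$ is left on the right-hand side except through the boundary data. Concretely, one must choose the localization $\tau_n$ to simultaneously reduce the relevant local martingales, pass to conditional expectation on $[t,\tau_n]$ to kill the martingale term, and then send $n\to\infty$ using monotone/dominated convergence — the domination being supplied by $Y\in\mathcal E$ and (H3)(d). A secondary technical point is the bookkeeping of absolute constants to land on the stated exponent $8p\lambda(1+\beta\|A_T\|_\infty)$ (and $8pq\lambda(1+\beta\|A_T\|_\infty)$); the precise constant $8$ is immaterial for the conclusion, so I would not belabor it, but it arises from combining the factor from applying It\^o to $|Y|$ versus $Y^\pm$, the comparison constants in step (3), and the BDG constant absorbed via Young's inequality. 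Once these are in place, the chain of inequalities closes and the finiteness follows from $\Xi$ being finite for all arguments, which is exactly the content of (H3)(d) and Lemma \ref{submartingle property}.
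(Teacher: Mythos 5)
Your steps (1)--(3) and (5) are broadly in the spirit of the paper's argument: the paper also works with an exponential transform of a shifted $Y$ (it sets $\bar G_t=Y_t+\int_0^t\alpha_s\,dA_s+\int_0^t\beta|Y_s|\,dA_s$ and the symmetric $\underline G$ with $-Y$), uses the growth condition (H3)(c) so that the $j_\lambda$ term in the generator is exactly cancelled by the jump compensator of $e^{\lambda\bar G}$, handles $\pm U$ separately, obtains general $q$ by replacing $\lambda$ with $q\lambda$, deduces \eqref{U bound} from \eqref{e|u|q01} by the elementary inequality $|x|\le e^{|x|}-1$ plus Jensen, and closes with Lemma \ref{submartingle property}. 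The martingale-property/localization issue you flag is handled exactly as you suggest, via Definition \ref{def solution}.

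The genuine gap is your step (4), i.e. the passage from a first-moment bound to the $p$-th moment of the increasing process. Your step (2) only yields $\mathbb E\bigl[\int_0^T\int_E j_\lambda(s,\pm U_s)\,dA_s\bigr]\le C\,\mathbb E[e^{c\lambda(1+\beta\|A_T\|_\infty)Y_*}]$, and the proposed upgrade "BDG on the martingale part, absorbed via Young's inequality" does not close here. The reason is structural: whichever martingale you apply BDG to (the BSDE martingale $\int\int U\,q$ or the exponential one $\int e^{\lambda\bar G_{s^-}}\int_E(e^{\lambda U_s(e)}-1)\,q(ds\,de)$), its supremum is bounded by boundary terms \emph{plus} the finite-variation part, which contains $\int_0^T\bigl(j_\lambda(s,U_s)+j_\lambda(s,-U_s)\bigr)dA_s$; by your own step (3) this quantity is comparable to the full bracket $\int_0^T\int_E(e^{\lambda|U_t(e)|}-1)^2\phi_t(de)\,dA_t$ itself, not to its square root. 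So after BDG and H\"older the right-hand side carries the target quantity to a power at least as large as the one on the left (in fact the power doubles at each Cauchy--Schwarz), and Young's inequality cannot absorb it --- unlike the Brownian quadratic case, where the bracket enters only to the power $p/2$. The paper avoids this entirely: from It\^o applied to $e^{2\lambda\bar G}$ it gets a \emph{conditional} estimate valid at every stopping time $\sigma$, namely $\mathbb E[\langle e^{\lambda\bar G}\rangle_T-\langle e^{\lambda\bar G}\rangle_\sigma\mid\mathcal F_\sigma]\le\mathbb E[e^{2\lambda\bar G_T}\mathbf 1_{\sigma<T}\mid\mathcal F_\sigma]$ (the nonnegative drift is simply discarded on the favourable side), and then invokes the Garcia--Neveu energy lemma to obtain $\mathbb E[\langle e^{\lambda\bar G}\rangle_T^p]\le p^p\,\mathbb E[e^{2p\lambda\bar G_T}]$, after which $\langle\bar M\rangle_T\le\sup_t e^{-2\lambda\bar G_t}\,\langle e^{\lambda\bar G}\rangle_T$ and Cauchy--Schwarz give \eqref{e|u|q01}. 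Some device of this energy-inequality type (Garcia--Neveu, or an equivalent stopping-time-uniform argument) is the missing idea in your proposal; without it the chain of inequalities in step (4) does not terminate.
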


\subsection{Comparison theorem for exponential growth BSDEs with a convex/concave generator }

We now prove a comparison theorem for (\ref{BSDE}) using the $\theta$-method, following the approach in \cite{briand2008quadratic}. The additional convexity/concavity assumption (H3)(e) plays a crucial role in our proof. Furthermore, the exponential term in the growth condition (H3)(c) parallels the role of the quadratic term in the conditions of quadratic BSDEs in \cite{briand2008quadratic}. Consequently, our results align formally with those of quadratic BSDEs. The uniqueness of BSDE(\ref{BSDE}) follows directly as a corollary of the comparison theorem.
\begin{thm}
\label{Comparison Theorem for BSDE}
Let $\left(Y, U\right)$ (resp. $(Y',U')$) be a solution of the BSDE (\ref{BSDE}) in $\mathcal{E}\times {H}_{\nu}^{2,p}$ associated with $\left(f, \xi\right)$ (resp. $\left(f', \xi'\right)$), for some $p\ge 1$. If $f\leq f', \ \xi\le \xi'$, and assumptions (H1)–(H3) hold, then
$$
\forall t \in[0, T], \quad Y_t\leq Y_t'\quad \mathbb P\text{-a.s.}
$$
\end{thm}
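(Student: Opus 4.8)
The plan is to use the $\theta$-method exactly in the spirit of \cite{briand2008quadratic}, exploiting the convexity (WLOG) of $f$ in $u$ together with the Lipschitz assumption (H3)(b) in $y$. Assume first that $f$ is convex in $u$. For $\theta\in(0,1)$, introduce the interpolated process
\[
\delta^\theta Y_t := \frac{Y_t-\theta Y_t'}{1-\theta},\qquad
\delta^\theta U_t := \frac{U_t-\theta U_t'}{1-\theta},
\]
and compute the BSDE it solves. Writing out $Y_t-\theta Y_t'$ and dividing by $1-\theta$, one finds that $(\delta^\theta Y,\delta^\theta U)$ solves a BSDE with terminal value $\delta^\theta\xi=(\xi-\theta\xi')/(1-\theta)\le\xi'$ (using $\xi\le\xi'$, $\xi'$ appears with $\le$ after rearrangement — in fact $\delta^\theta\xi\le\xi'$ when $\xi\le\xi'$) and driver
\[
g^\theta(t,\cdot) = \frac{f(t,Y_t,U_t)-\theta f'(t,Y_t',U_t')}{1-\theta}.
\]
The key algebraic step is the bound
\[
\frac{f(t,Y_t,U_t)-\theta f'(t,Y_t',U_t')}{1-\theta}
\le \frac{f(t,Y_t,U_t)-\theta f(t,Y_t',U_t')}{1-\theta}
\le f\bigl(t,\delta^\theta Y_t,\delta^\theta U_t\bigr) + \text{(error terms)},
\]
where the first inequality uses $f\le f'$ and the second uses convexity of $f$ in $u$ in the form $f(t,y,u)\le (1-\theta)f(t,y,(u-\theta u')/(1-\theta))+\theta f(t,y,u')$ after also paying a Lipschitz price in $y$ to replace $f(t,Y_t',\cdot)$ by $f(t,\delta^\theta Y_t,\cdot)$. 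The resulting error terms are controlled by $\beta(|\delta^\theta Y_t| + |Y_t'|)$ plus, crucially, $\frac{\theta}{1-\theta}$ times something integrable involving $\alpha$ and the exponential-growth bounds on $f(t,Y_t',U_t')$.

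Next I would apply the a priori estimate of Lemma \ref{submartingle property}(ii) to $\delta^\theta Y$: the driver $g^\theta$ satisfies a one-sided bound of the form $g^\theta(t,y,u)\le \alpha_t^\theta+\beta|y|+\frac1\lambda j_\lambda(t,u)$ where $\alpha^\theta$ absorbs the $\frac{\theta}{1-\theta}$-weighted error terms and the original $\alpha_t$; the point is that $\alpha^\theta$ is still integrable in the exponential sense required by \eqref{integrability_condition}, uniformly enough in $\theta$, because $Y,Y'\in\mathcal E$ and Proposition \ref{priori estimate on U and K} gives the needed exponential integrability of the terms $e^{q\lambda|U|}-1$ and $e^{q\lambda|U'|}-1$ entering $j_\lambda(t,\pm U_t')$. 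Lemma \ref{submartingle property}(ii) then yields, for each $t$,
\[
\exp\{p\lambda(\delta^\theta Y_t)^+\} \le \mathbb E_t\Bigl[\exp\Bigl\{p\lambda e^{\beta A_T}(\delta^\theta\xi)^+ + p\lambda\int_t^T e^{\beta A_s}\alpha_s^\theta\,dA_s\Bigr\}\Bigr],
\]
and since $\delta^\theta\xi\le\xi'$ this gives a bound on $(\delta^\theta Y_t)^+$ by a quantity whose $\theta$-dependence is isolated in the $\frac{\theta}{1-\theta}$-weighted integral inside the conditional expectation.

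Finally I would let $\theta\uparrow1$. The identity $Y_t - Y_t' = (1-\theta)(\delta^\theta Y_t - Y_t')$ shows $Y_t-Y_t' = (1-\theta)\delta^\theta Y_t - (1-\theta)Y_t'$, so to conclude $Y_t\le Y_t'$ it suffices to show $\limsup_{\theta\uparrow1}(1-\theta)(\delta^\theta Y_t)^+ \le 0$, equivalently that $(1-\theta)(\delta^\theta Y_t)^+\to0$. From the exponential bound above, $(\delta^\theta Y_t)^+ \le \frac{1}{p\lambda}\log \mathbb E_t[\cdots]$; the delicate part is that the $\frac{\theta}{1-\theta}$ factor inside makes this blow up like $\frac{1}{1-\theta}$, but multiplied by the prefactor $(1-\theta)$ and using $\frac1p\log$ with $p$ chosen appropriately (or, more carefully, using that $\log\mathbb E_t[e^{\frac{C\theta}{1-\theta}\eta}]$ grows at most linearly in $\frac{\theta}{1-\theta}$ when $\eta$ has exponential moments of all orders, via $\mathcal E$-membership and Proposition \ref{priori estimate on U and K}), one gets $(1-\theta)(\delta^\theta Y_t)^+ = O\bigl((1-\theta)\cdot\frac{1}{1-\theta}\cdot\frac{1}{p}\bigr)$ which can be sent to $0$ by first sending $\theta\uparrow1$ and then $p\uparrow\infty$. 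This last limiting argument — extracting the correct rate so that the $(1-\theta)$ prefactor beats the $\frac{\theta}{1-\theta}$ inside the exponential, while keeping everything integrable uniformly in $\theta$ — is the main obstacle; it is precisely where the full strength of $Y,Y'\in\mathcal E$ (exponential moments of every order) and the estimate \eqref{e|u|q01} on $U,U'$ is needed. The concave case is handled symmetrically by running the same argument on $-f$, or equivalently by interpolating from the other side with $(Y_t'-\theta Y_t)/(1-\theta)$.
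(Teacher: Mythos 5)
Your overall strategy (the $\theta$-method, convexity of $f$ in $u$, a one-sided a priori bound on the interpolated process, then $\theta\uparrow 1$) is the paper's strategy, but your execution contains a genuine gap at its central point. When you bound the driver $g^\theta(t)=\frac{f(t,Y_t,U_t)-\theta f'(t,Y_t',U_t')}{1-\theta}$, the cancellation structure of the $\theta$-method leaves \emph{no} $\frac{\theta}{1-\theta}$-weighted remainder: the term $\frac{\theta}{1-\theta}\big(f-f'\big)(t,Y_t',U_t')$ is nonpositive because $f\le f'$ and is simply discarded; the convexity inequality $f(t,Y_t',U_t)\le\theta f(t,Y_t',U_t')+(1-\theta)f\big(t,Y_t',\tfrac{U_t-\theta U_t'}{1-\theta}\big)$ cancels the remaining $\theta f(t,Y_t',U_t')$ exactly; and the Lipschitz price in $y$ is $a(t)(Y_t-\theta Y_t')+(1-\theta)\beta|Y_t'|$ with $|a|\le\beta$. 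After dividing by $1-\theta$ one gets the $\theta$-uniform bound $g^\theta(t)\le \alpha_t+2\beta|Y_t'|+\beta|\delta^\theta Y_t|+\frac1\lambda j_\lambda\big(t,\delta^\theta U_t\big)$, i.e. the effective $\alpha^\theta=\alpha+2\beta|Y'|$ does not depend on $\theta$. Your proposal instead posits a leftover ``$\frac{\theta}{1-\theta}$ times something integrable'' and then tries to beat it with the prefactor $(1-\theta)$ and a subsequent limit $p\uparrow\infty$. That rescue does not work: by Jensen's inequality, for every $p$,
\begin{equation*}
\frac{1}{p\lambda}\log \mathbb{E}_t\Big[\exp\Big\{p\lambda\,\tfrac{\theta}{1-\theta}\,H\Big\}\Big]\;\ge\;\frac{\theta}{1-\theta}\,\mathbb{E}_t[H],
\end{equation*}
so after multiplying by $(1-\theta)$ the contribution is bounded below by $\theta\,\mathbb{E}_t[H]$, which does not vanish as $\theta\uparrow1$; moreover the left-hand side is nondecreasing in $p$, so sending $p\uparrow\infty$ afterwards only makes things worse (the $p$ inside the exponential and the $\frac1p$ outside do not produce the $O(1/p)$ gain you claim). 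The comparison theorem holds precisely because, with the algebra done correctly, the quantity inside the conditional expectation is $\theta$-free and the final estimate is $(1-\theta)$ times a fixed finite random variable; no double limit is needed.

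Once this is corrected, your route is a legitimate variant of the paper's: the paper keeps the undivided difference $\tilde Y=Y-\theta Y'$, performs the explicit exponential change of variables $P_t=e^{\zeta_\theta e^{\tilde A_t}\tilde Y_t}$ with $\zeta_\theta=\lambda e^{\beta\|A_T\|_\infty}/(1-\theta)$, checks pointwise that the jump-compensator term is dominated, and localizes; you would instead apply Lemma \ref{submartingle property}(ii) directly to $(\delta^\theta Y,\delta^\theta U)$ with terminal value $\delta^\theta\xi\le\xi'$ and $\alpha$ replaced by $\alpha+2\beta|Y'|$ (admissible by $Y'\in\mathcal E$ and (H3)(d)). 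If you take that shortcut you must also verify that $(\delta^\theta Y,\delta^\theta U)$ qualifies as a solution in the sense of Definition \ref{def solution}, i.e. that the processes $\int_0^\cdot\int_E(e^{\pm p\lambda\,\delta^\theta U_s(e)}-1)\,q(dsde)$ are local martingales for each $p$; this follows from the exponential estimates (\ref{e|u|q01}) of Proposition \ref{priori estimate on U and K} applied to $U$ and $U'$, exactly the integrability you invoke, but it needs to be said. As written, however, the key driver estimate and the concluding limiting argument are incorrect.
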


\begin{proof}
The approach is inspired by \cite{briand2008quadratic}. Without loss of generality, let us assume $f$ is convex with respect to $u$; otherwise, refer to Remark \ref{concave01}.
Let $\theta \in(0,1)$ and set $\tilde Y_t=Y_t-\theta {Y_t}^{\prime}$ and $\tilde U_t=U_t-\theta {U_t}^{\prime}$. Consider a progressively measurable process $\{a(t)\}_{0 \leq t \leq T}$, with appropriate integrability, to be chosen later. We define, for all $t \in[0, T], \ \tilde A_t=\int_0^t a(s) d A_s$. Since $\tilde Y$ is c\`adl\`ag and $A$ is continuous, by It\^o’s formula,
$$
e^{\tilde A_t} \tilde Y_t=e^{\tilde A_T} \tilde Y_T+\int_t^T e^{\tilde A_s} F_sd A_s-\int_t^T\int_E e^{\tilde A_s} \tilde U_s(e) q(dsde), \quad 0 \leq t \leq T,
$$
where,
\begin{equation}
\label{def-F}
\begin{aligned}
F_t=&\left(f\left(t, Y_t, U_t\right)-\theta f'\left(t, {Y_t}^{\prime}, {U_t}^{\prime}\right)\right)-a(t) \tilde Y_{t}\\
= & \left(f\left(t, Y_t, U_t\right)-f\left(t, {Y_t}^{\prime}, U_t\right)\right)+\left(f\left(t, {Y_t}^{\prime}, U_t\right)-\theta f\left(t, {Y_t}^{\prime}, {U_t}^{\prime}\right)\right)-a(t)\tilde Y_{t}+\theta\delta f(t)
\end{aligned}
\end{equation}
and $
\delta f(t)=\left(f-f^{\prime}\right)\left(t, {Y_t}^{\prime}, {U_t}^{\prime}\right).
$

Because $f$ is convex with respect to $u$, the second term on the right-hand side of (\ref{def-F}) can be bounded by considering the growth condition (H3)(c). Specifically,
$$
\begin{aligned}
f\left(t, {Y_t}^{\prime}, U_t\right) & =f\left(t, {Y_t}^{\prime}, \theta {U_t}^{\prime}+(1-\theta) \frac{{U_t}-\theta {U_t}^{\prime}}{1-\theta}\right) \\
& \leq \theta f\left(t, {Y_t}^{\prime}, {U_t}^{\prime}\right)+(1-\theta) f\left(t, {Y_t}^{\prime}, \frac{{U_t}-\theta {U_t}^{\prime}}{1-\theta}\right)
\end{aligned}
$$
and from the growth condition (H3)(c) of the generator $f$,
\begin{equation}
\label{f-bound}
f\left(t, {Y_t}^{\prime}, {U_t}\right) \leq \theta f\left(t, {Y_t}^{\prime}, {U_t}^{\prime}\right)+(1-\theta)\left(\alpha_t+\beta\left|{Y_t}^{\prime}\right|\right)+\frac{1-\theta}{\lambda}j_{\lambda}(\frac{\tilde U_t}{1-\theta}).
\end{equation}

Note that for the first term in (\ref{def-F}),
$$
\begin{aligned}
f\left(t, Y_t, {U_t}\right)-f\left(t, {Y_t}^{\prime}, {U_t}\right)= & f\left(t, Y_t, {U_t}\right)-f\left(t, \theta {Y_t}^{\prime}, {U_t}\right) \\
& +f\left(t, \theta {Y_t}^{\prime}, {U_t}\right)-f\left(t, {Y_t}^{\prime}, {U_t}\right) \\
= & a(t) \tilde Y_t+f\left(t, \theta {Y_t}^{\prime}, {U_t}\right)-f\left(t, {Y_t}^{\prime}, {U_t}\right),
\end{aligned}
$$
where $a(t)=\left[f\left(t, Y_t, {U_t}\right)-f\left(t, \theta {Y_t}^{\prime}, {U_t}\right)\right] / \tilde Y_t$ when $\tilde Y_t \neq 0$ and $a(t)=\beta$ otherwise. Since $f$ is $\beta$-Lipschitz in $y$, then $a$ is bounded by $\beta$ and
$$
f\left(t, Y_t, {U_t}\right)-f\left(t, {Y_t}^{\prime}, {U_t}\right) \leq a(t) \tilde Y_t+(1-\theta) \beta\left|{Y_t}^{\prime}\right|.
$$
Recalling the definition (\ref{def-F}) of $F$, we obtain
\begin{equation}
\label{F}
F_t \leq(1-\theta)\left(\alpha(t)+2 \beta\left|{Y_t}^{\prime}\right|\right)+\frac{1-\theta}{\lambda}j_{\lambda}(\frac{\tilde U_t}{1-\theta})+\theta\delta f(t).
\end{equation}

Now, we will eliminate the exponential term with an exponential change of variables. Let $c > 0$ and set $P_t=e^{c e^{\tilde A_t}\tilde Y_t}, Q_t=P_{t^-}(e^{ce^{\tilde A_t}\tilde U_t}-1)$. Applying It\^o’s formula, we deduce that
$$
\begin{aligned}
P_t&=P_T+\int_t^T P_{s^-}\left[ce^{\tilde A_s}F_s-\int_E(e^{ce^{\tilde A_s}\tilde U_s}-ce^{\tilde A_s}\tilde U_s-1)\phi_s(de)\right]dA_s-\int_t^T\int_EP_{s^-}(e^{ce^{\tilde A_s}\tilde U_s}-1)q(dsde)\\
&:=P_T+\int_t^T G_s d A_s-\int_t^T \int_E Q_s q(dsde).
\end{aligned}
$$
Equation (\ref{F}) yields because $c$ is nonnegative,
$$
\begin{aligned}
G_t= & P_{t^-}\left[ce^{\tilde A_t}F_t-\int_E(e^{ce^{\tilde A_t}\tilde U_t}-ce^{\tilde A_t}\tilde U_t-1)\phi_t(de)\right]\\
\le & P_{t^-}ce^{\tilde A_t}(1-\theta)\left(\alpha(t)+2 \beta\left|{Y_t}^{\prime}\right|\right)+P_{t^-}ce^{\tilde A_t}\theta\delta f(t)\\
&\quad +P_{t^-}\left[\int_E\left[\frac{ce^{\tilde A_t}(1-\theta)}{\lambda}\left(e^{\frac{\lambda\tilde U_t}{1-\theta}}-\frac{\lambda\tilde U_t}{1-\theta}-1\right)-\left(e^{ce^{\tilde A_t}\tilde U_t}-ce^{\tilde A_t}\tilde U_t-1\right)\right]\phi_t(de)\right].
\end{aligned}
$$
Because $\tilde A_t \geq-\beta A_T$, we can choose $c=\zeta_\theta:=\lambda e^{\beta \|A_T\|_\infty} /(1-\theta)$ to find the following inequality.
\begin{equation}
\label{G-estimate}
G_t \leq P_{t^{-}} e^{\tilde A_t}\lambda e^{\beta \|A_T\|_\infty}\left(\alpha(t)+2 \beta\left|{Y_t}^{\prime}\right|\right).
\end{equation}
We present a proof for (\ref{G-estimate}) below. Note that $f\le f'$, $\delta f(t)\le 0$. Therefore, it is enough to demonstrate that
\begin{equation}
\label{negative integral}
\int_E\left[\frac{ce^{\tilde A_t}(1-\theta)}{\lambda}\left(e^{\frac{\lambda\tilde U_t}{1-\theta}}-\frac{\lambda\tilde U_t}{1-\theta}-1\right)-\left(e^{ce^{\tilde A_t}\tilde U_t}-ce^{\tilde A_t}\tilde U_t-1\right)\right]\phi_t(de):=\int_E g_t(c,\tilde U_t)\phi_t(de)\le 0,
\end{equation}
where for each $v\in\mathbb R$ and $c>0$,
$$
g_t(v,c):=\frac{ce^{\tilde A_t}(1-\theta)}{\lambda}\left(e^{\frac{\lambda v}{1-\theta}}-\frac{\lambda v}{1-\theta}-1\right)-\left(e^{ce^{\tilde A_t}v}-ce^{\tilde A_t}v-1\right).
$$
Because $\phi_t(\cdot)$ is a probability measure, it is enough to show that $g_t(v,c)\le 0$ for every $v\in\mathbb R$ and $c>0$, which will be chosen later.
Choosing $c=\zeta_\theta:=\lambda e^{\beta \|A_T\|_\infty} /(1-\theta)$, we claim
$$
g_t(v, \zeta_\theta)=e^{\tilde A_t+\beta\|A_T\|_\infty}(e^{\frac{\lambda v}{1-\theta}}-1)-e^{\frac{\lambda v}{1-\theta}e^{(\tilde A_t+\beta \|A_T\|_\infty)}}+1\le 0.
$$
Notice that $g_t(0,\zeta_\theta)=0$, and for $v\in\mathbb R$, taking the derivative with respect to $v$,
$$
g_t'(v,\zeta_\theta)=\frac{\lambda}{1-\theta}e^{\tilde A_t+\beta\|A_T\|_\infty}\left(e^{\frac{\lambda v}{1-\theta}}-e^{\frac{\lambda v}{1-\theta}e^{(\tilde A_t+\beta \|A_T\|_\infty)}}\right).
$$
Therefore, $g_t'(v,\zeta_\theta)>0$ if $v<0$, and $g_t'(v,\zeta_\theta)\le0$ if $v\ge 0$, which implies $g_t(v,\zeta_\theta)\le g_t(0,\zeta_\theta)=0$. Thus (\ref{negative integral}) holds, which implies (\ref{G-estimate}).

Finally, we introduce the processes
$$
\begin{gathered}
D_t=\exp \left(\int_0^t e^{\tilde A_s}\left(\lambda e^{\beta \|A_T\|_\infty}\left(\alpha(s)+2 \beta\left|Y_s^{\prime}\right|\right)\right) d A_s\right), \\
\widetilde{P}_t=D_t P_t, \quad \widetilde{Q}_t=D_t Q_t.
\end{gathered}
$$
Again, from It\^o’s formula, it follows that for any stopping time $\tau$ such that $0 \leq t \leq \tau \leq T$,
\begin{equation}
\label{martingale}
\widetilde{P}_t \leq \widetilde{P}_\tau-\int_t^\tau\int_E \widetilde{Q}_sq(dsde).
\end{equation}
Let us consider that for $n \geq 1, \tau_n>t$ is the localization sequence such that $\tau_n\to\infty$ as $n\to\infty$ and,
$$
\int_t^{\tau_n\wedge\cdot}\int_E\widetilde{Q}_sq(dsde)
$$
is a martingale.

We deduce from (\ref{martingale}) that
$$
P_t \leq \mathbb{E}\left[\exp\left(\int_t^{\tau_n}e^{\tilde A_s}\left(\lambda e^{\beta \|A_T\|_\infty}\left(\alpha(s)+2 \beta\left|Y_s^{\prime}\right|\right)\right) d A_s\right) P_{\tau_n} \mid \mathcal{F}_t\right],
$$
and, in view of the integrability assumption on $\alpha, Y$ and $Y^{\prime}$, since $\left|\tilde A_s\right| \leq \beta A_T$, we can send $n$ to infinity to obtain
$$
P_t \leq \mathbb{E}\left[\exp\left(\int_t^{T}e^{\tilde A_s}\left(\lambda e^{\beta \|A_T\|_\infty}\left(\alpha(s)+2 \beta\left|Y_s^{\prime}\right|\right)\right) d A_s\right) P_{T} \mid \mathcal{F}_t\right].
$$
Equivalently,
$$
\begin{aligned}
& \exp \left(\frac{\lambda e^{\beta\|A_T\|_\infty+\tilde A_t}}{1-\theta}\left(Y_t-\theta {Y_t}^{\prime}\right)\right) \\
& \quad \leq \mathbb{E}\left(\exp \left\{\lambda e^{2 \beta ||A_T\|_\infty}\left(|\xi|+\int_t^T\left(\alpha(s)+2 \beta\left|Y_s^{\prime}\right|\right) d A_s\right)\right\} \mid \mathcal{F}_t\right).
\end{aligned}
$$
In particular, because $\beta \|A_T\|_\infty+\tilde A_t \geq 0$,
$$
Y_t-\theta {Y_t}^{\prime} \leq \frac{1-\theta}{\lambda} \log \mathbb{E}\left(\exp \left\{\lambda e^{2 \beta \|A_T\|_\infty}\left(|\xi|+\int_t^T\left(\alpha(s)+2 \beta\left|Y_s^{\prime}\right|\right) d A_s\right)\right\} \mid \mathcal{F}_t\right)
$$
and sending $\theta$ to 1, we get $Y_t-{Y_t}^{\prime} \leq 0$ and complete the proof of the comparison theorem.
\end{proof}

\begin{remark}
\label{concave01}
When $f$ is concave in $u$, we use $\theta Y-Y'$ in the aforementioned proof. The proof remains valid. Therefore, in the following discussion, unless otherwise stated, we assume that $f$ is convex in $u$ in Section \ref{section non reflected}.
\end{remark}

\begin{corollary}[Uniqueness]
\label{uniqueness}
If assumptions (H1)–(H3) hold, then the BSDE (\ref{BSDE}) admits at most one solution $(Y, U)$ in the space $ \mathcal E\times H_{\nu}^{2,p}$, for all $p\ge 1$.
\end{corollary}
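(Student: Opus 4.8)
\section*{Proof proposal for the Uniqueness corollary}

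The corollary is an immediate consequence of the comparison theorem, Theorem \ref{Comparison Theorem for BSDE}. Suppose $(Y,U)$ and $(Y',U')$ are two solutions of BSDE $(\xi,f)$, both lying in $\mathcal E\times H_{\nu}^{2,p}$ for every $p\ge 1$. Applying Theorem \ref{Comparison Theorem for BSDE} to the pair $(f,\xi)$ and $(f',\xi')=(f,\xi)$ --- for which $f\le f'$ and $\xi\le\xi'$ hold trivially, and for which (H3)(e) supplies the required convexity or concavity (Remark \ref{concave01} covering the concave case) --- one obtains $Y_t\le Y'_t$ for all $t\in[0,T]$, $\mathbb P$-a.s. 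Exchanging the roles of the two solutions yields the reverse inequality, so $Y_t=Y'_t$ for all $t$, and, both processes being c\`adl\`ag, $Y$ and $Y'$ are indistinguishable.

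It remains to check $U=U'$. Subtracting the two instances of (\ref{BSDE}) and using $Y\equiv Y'$ (together with the continuity of $A$, so that $f(s,Y_s,\cdot)=f(s,Y_{s^-},\cdot)$ holds $dA_s$-a.e.), we find, for every $t\in[0,T]$,
$$
\int_t^T\int_E\bigl(U_s(e)-U'_s(e)\bigr)q(ds\,de)=\int_t^T\bigl(f(s,Y_s,U_s)-f(s,Y_s,U'_s)\bigr)dA_s .
$$
First I would verify that the left-hand side is a genuine martingale: since $U,U'\in H_{\nu}^{2,p}\subseteq H_{\nu}^{2,1}$ for $p\ge 1$ (here $\|A_T\|_\infty<\infty$ from (H1) is used) and $\phi_s$ is a probability measure, two applications of the Cauchy--Schwarz inequality give
$$
\mathbb E\!\left[\int_0^T\!\!\int_E\bigl|U_s(e)-U'_s(e)\bigr|\,\phi_s(de)\,dA_s\right]\le\|A_T\|_\infty^{1/2}\,\mathbb E\!\left[\left(\int_0^T\!\!\int_E\bigl|U_s(e)-U'_s(e)\bigr|^2\phi_s(de)\,dA_s\right)^{1/2}\right]<\infty .
$$
Hence $M_t:=\int_0^t\int_E(U_s-U'_s)q(ds\,de)$ is a martingale with $M_0=0$, and evaluating the displayed identity at a generic $t$ and at $t=0$ shows that $M$ coincides with the continuous, adapted, finite-variation process $S_t:=\int_0^t(f(s,Y_s,U_s)-f(s,Y_s,U'_s))dA_s$. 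A continuous local martingale of finite variation is constant, so $M\equiv 0$; consequently its predictable quadratic variation vanishes, i.e.
$$
\int_0^T\int_E\bigl|U_s(e)-U'_s(e)\bigr|^2\phi_s(de)\,dA_s=0,\qquad \mathbb P\text{-a.s.},
$$
which is exactly $U=U'$ in $H_{\nu}^{2,p}$ for every $p\ge 1$.

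I do not anticipate any real obstacle here: the entire content sits in the comparison theorem, already established, while the identification of $U$ with $U'$ is routine once the $q$-integral of $U-U'$ has been shown to be a true martingale. The only points that need a few explicit lines are the integrability estimate above and the appeal to the fact that a continuous finite-variation local martingale vanishes (equivalently, uniqueness of the canonical decomposition of the special semimartingale $M=S$).
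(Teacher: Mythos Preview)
Your proof is correct and follows the same approach as the paper: the corollary is stated there without a separate proof, as an immediate consequence of Theorem \ref{Comparison Theorem for BSDE}. Your argument for $U=U'$ via the identification of the $q$-integral with a continuous finite-variation process (hence a constant martingale) is the standard way to fill in the detail the paper leaves implicit.
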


\subsection{Existence of exponential growth BSDE with bounded terminal}
To prove the existence of the result, we require the following additional assumption.
\hspace*{\fill}\\

\noindent\textbf{(H4) (Uniform linear bound condition)}
There exists a positive constant $C_0$ such that for each $t \in[0, T]$, $u\in L^2(E,\mathcal{B}(E),\phi_t(\omega,dy))$, if
 $f$ is convex (resp. concave) in $u$, then $f(t,0,u)-f(t,0,0)\ge -C_0\|u\|_t$ (resp. $f(t,0,u)-f(t,0,0)\le C_0\|u\|_t$).

\hspace*{\fill}\\

Before proceeding with the proof, we require the following lemma, which establishes the essential properties of the auxiliary drivers. In the following discussion, unless stated otherwise, we assume that $f$ is convex with respect to $u$. For $t\in[0,T]$,
on $(y, u) \in \mathbb R \times L^2(E,\mathcal{B}(E),\phi_t(\omega,dy))$, we define a set of auxiliary generators $\left(f^{n}\right)_n$ as follows.
$$
\begin{aligned}
{f}^{n}(t, y, u)=\inf _{ r\in L^{2}(E,\mathcal{B}(E),\phi_t(\omega,dy))} \left\{f(t, y, r)+n \|u-r\|_t\right\}.
\end{aligned}
$$
The properties of the auxiliary drivers are outlined below.

\begin{lemma}
\label{lemma monotonicity}
Under the assumptions (H1)–(H4),

(i) The sequence $\{{f}^{n}\}_n$ is globally Lipschitz with respect to (y, u) in $\mathbb R \times L^2(E,\mathcal{B}(E),\phi_t(\omega,dy))$.

(ii)
The sequence $\{f^{n}\}_n$ is convex with respect to $u$ if $f$ is convex with respect to $u$ for $u\in L^2(E,\mathcal{B}(E),\phi_t(\omega,dy))$.

(iii)
For $t\in[0,T]$, the sequence $\{{f}^{n}\}_n$ converges to ${f}$ on $(y, u) \in \mathbb R \times L^2(E,\mathcal{B}(E),\phi_t(\omega,dy))$.

(iv) For $n>C_0$,
\begin{equation}
\label{fn bound}
-3\alpha_t-3\beta|y|-\frac{1}{\lambda}j_\lambda(t,-u)\leq f^{n}(t,y,u)\leq f(t,y,u) \leq \alpha_t+\beta|y|+\frac{1}{\lambda}j_\lambda(t,u)\leq 3\alpha_t+3\beta|y|+\frac{1}{\lambda}j_\lambda(t,u).
\end{equation}

(v) For each $n>C_0$ and $t\in[0,T]$, $f^{n}(t,0,0)=f(t,0,0)$.
\end{lemma}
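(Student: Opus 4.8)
The plan is to treat all five assertions as properties of the inf‑convolution of $f$ with $n\|\cdot\|_t$ in the $u$‑variable, proving them in the order (i), (ii), (iii), (v), (iv), with (iv) built on (ii). The preliminary estimate I would record first, and use throughout, is obtained by chaining the $y$‑Lipschitz bound (H3)(b), assumption (H4), and (H3)(c) evaluated at $(t,0,0)$ (where $j_\lambda(t,0)=0$): for every admissible $r$, $f(t,y,r)\ge f(t,0,r)-\beta|y|\ge f(t,0,0)-C_0\|r\|_t-\beta|y|\ge -\alpha_t-\beta|y|-C_0\|r\|_t$. Hence for $n>C_0$ and any $r$ one has $f(t,y,r)+n\|u-r\|_t\ge -\alpha_t-\beta|y|-C_0\|u\|_t+(n-C_0)\|u-r\|_t$, which shows at once that $f^{n}(t,y,u)$ is a finite real number (not $-\infty$) and that any minimizing sequence stays bounded. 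I would also note the two trivial facts $f^{n}(t,y,u)\le f(t,y,u)$ (take $r=u$) and that $f^{n}(t,y,u)$ is nondecreasing in $n$.

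Given this, (i) and (ii) are routine: Lipschitz in $u$ with constant $n$ and in $y$ with constant $\beta$ both follow by inserting $f(t,y,r)+n\|u'-r\|_t\le f(t,y,r)+n\|u-r\|_t+n\|u-u'\|_t$ (respectively the $y$‑analogue coming from (H3)(b)) into the infimum and swapping the two points, and jointly these give the global Lipschitz property; convexity of $f^{n}(t,y,\cdot)$ is the standard fact that the inf‑convolution of the convex maps $f(t,y,\cdot)$ and $n\|\cdot\|_t$ is convex, obtained by choosing $\varepsilon$‑minimizers $r_1,r_2$ for $u_1,u_2$ and testing $\theta u_1+(1-\theta)u_2$ against $\theta r_1+(1-\theta)r_2$, then sending $\varepsilon\to 0$. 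For (iii), monotonicity and $f^{n}\le f$ give that $\ell:=\lim_n f^{n}(t,y,u)$ exists with $\ell\le f(t,y,u)$; then picking near‑minimizers $r_n$ with $f(t,y,r_n)+n\|u-r_n\|_t\le f^{n}(t,y,u)+1/n\le f(t,y,u)+1/n$, the preliminary bound forces $(n-C_0)\|u-r_n\|_t$ to remain bounded uniformly in $n$, hence $r_n\to u$ in $\|\cdot\|_t$, and since also $f^{n}(t,y,u)\ge f(t,y,r_n)-1/n$, the continuity assumption (H3)(a) yields $\ell\ge f(t,y,u)$. I expect this convergence step to be the main — and essentially the only — non‑formal point: the work is in getting $\|\cdot\|_t$‑convergence of the minimizing sequence (which is precisely what (H4) provides, by ruling out minimizing along far‑away $r$) and only then invoking continuity of $f$; note that the growth bound $j_\lambda(t,-\cdot)$ is not itself $\|\cdot\|_t$‑continuous, so it matters that continuity is hypothesized on $f$.

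For (v), take $r=0$ for "$\le$", and for "$\ge$" use (H4): $f(t,0,r)+n\|r\|_t\ge f(t,0,0)+(n-C_0)\|r\|_t\ge f(t,0,0)$ when $n\ge C_0$, so the infimum equals $f(t,0,0)$. Finally, for (iv) the upper chain $f^{n}(t,y,u)\le f(t,y,u)\le \alpha_t+\beta|y|+\frac1\lambda j_\lambda(t,u)\le 3\alpha_t+3\beta|y|+\frac1\lambda j_\lambda(t,u)$ is immediate from $f^{n}\le f$, the upper bound in (H3)(c), and non‑negativity of $\alpha_t,\beta,|y|$; for the lower bound I would exploit convexity from (ii) at the midpoint $0=\tfrac12u+\tfrac12(-u)$, which gives $f^{n}(t,y,u)\ge 2f^{n}(t,y,0)-f^{n}(t,y,-u)$, and then substitute $f^{n}(t,y,0)\ge -\alpha_t-\beta|y|$ (the computation in (v) with a general $y$, using the preliminary bound) together with $f^{n}(t,y,-u)\le f(t,y,-u)\le \alpha_t+\beta|y|+\frac1\lambda j_\lambda(t,-u)$ (from $f^{n}\le f$ and (H3)(c)); this produces exactly $f^{n}(t,y,u)\ge -3\alpha_t-3\beta|y|-\frac1\lambda j_\lambda(t,-u)$, and the remaining link $f(t,y,u)\le 3\alpha_t+3\beta|y|+\frac1\lambda j_\lambda(t,u)$ is trivial. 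So the whole lemma reduces to inf‑convolution bookkeeping, the careful use of (H4) for finiteness and for the convergence in (iii), and the midpoint‑convexity trick for the lower bound in (iv).
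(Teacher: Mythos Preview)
Your proposal is correct and follows essentially the same approach as the paper: for (iv) and (v) the paper uses exactly your midpoint‑convexity trick $f^{n}(t,y,u)\ge 2f^{n}(t,y,0)-f^{n}(t,y,-u)$ together with (H4) to bound $f^{n}(t,y,0)$ from below and (H3)(c) to bound $f^{n}(t,y,-u)$ from above, arriving at the same constants $3\alpha_t,\,3\beta$. For (i)--(iii) the paper simply cites \cite{Lepeltier_1997}, Lemma~1, whereas you spell out the standard inf‑convolution arguments; your treatment of (iii) is in fact more careful, since that reference is stated in finite dimensions under linear growth, and you correctly identify that (H4) is what forces $\|\cdot\|_t$‑convergence of the near‑minimizers so that the continuity hypothesis (H3)(a) on $f$ can be invoked.
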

\begin{proof}[Proof of Lemma \ref{lemma monotonicity}]
Assertions (i)–(iii) follow directly from, for example, \cite[Lemma 1]{Lepeltier_1997}. We only prove assertion (iv) and (v). For assertion (iv), the right-hand side is evident from the sequence $(f^n)_n$ definition. For the left-hand side, first note that owing to the convexity of $f^n$ with respect to $u$, it holds for every $t\in [0,T]$ and $(y, u) \in \mathbb R \times L^2(E,\mathcal{B}(E),\phi_t(\omega,dy))$,
$$
f^n(t,y,0)\le \frac{1}{2}f^n(t,y,u)+\frac{1}{2}f^n(t,y,-u).
$$
Thus, owing to the right-hand side of (\ref{fn bound}), it follows that for $n > C_0$,
$$
\begin{aligned}
f^n(t,y,u)&\ge 2f^n(t,y,0)-f^n(t,y,-u)\\
&\ge 2\inf _{ r\in L^{2}(E,\mathcal{B}(E),\phi_t(\omega,dy))} \left\{f(t, y, r)+n \|r\|_t\right\}-\alpha_t-\beta|y|-\frac{1}{\lambda}j_\lambda(t,-u)\\
&\ge 2\inf _{ r\in L^{2}(E,\mathcal{B}(E),\phi_t(\omega,dy))} \left\{f(t, 0, r)+n \|r\|_t\right\}-\alpha_t-3\beta|y|-\frac{1}{\lambda}j_\lambda(t,-u)\\
&\ge 2\inf _{ r\in L^{2}(E,\mathcal{B}(E),\phi_t(\omega,dy))} \left\{-C_0\|r\|_t+n \|r\|_t\right\}+2f(t,0,0)-\alpha_t-3\beta|y|-\frac{1}{\lambda}j_\lambda(t,-u)\\
&\ge -3\alpha_t-3\beta|y|-\frac{1}{\lambda}j_\lambda(t,-u),
\end{aligned}
$$
where we employ condition (H4) in the second-to-last inequality and condition (H3)(c) in the final inequality.

For assertion (v), according to the definition of the sequence $\{f^n\}$, for each $n\ge 1$ and $t\in[0,T]$, $f^n(t,0,0)\le f(t,0,0)$. On the other hand, for $n>C_0$ and $t\in[0,T]$, from assumption (H4):
$$
\begin{aligned}
f^n(t,0,0)&=\inf _{ r\in L^{2}(E,\mathcal{B}(E),\phi_t(\omega,dy))} \left\{f(t, 0, r)+n \|r\|_t\right\}\\
&=\inf _{ r\in L^{2}(E,\mathcal{B}(E),\phi_t(\omega,dy))} \left\{f(t, 0, r)-f(t,0,0)+n \|r\|_t\right\}+f(t,0,0)\\
&\ge \inf _{ r\in L^{2}(E,\mathcal{B}(E),\phi_t(\omega,dy))} \left\{-C_0\|r\|_t+n \|r\|_t\right\}+f(t,0,0)\\
&\ge f(t,0,0).
\end{aligned}
$$
Thus, for each $n>C_0$ and $t\in[0,T]$, $f^n(t,0,0)=f(t,0,0)$.

\end{proof}

\begin{remark}
If $f$ is concave in $u$, the auxiliary generators should be defined as follows
$$
\begin{aligned}
\tilde {f}^{n}(t, y, u)=\sup _{ r\in L^{2}(E,\mathcal{B}(E),\phi_t(\omega,dy))} \left\{f(t, y, r)-n \|u-r\|_t\right\}.
\end{aligned}
$$
Similar properties hold by a similar argument.
\end{remark}

Given the properties of the auxiliary generators, we are prepared to construct a solution of BSDE (\ref{BSDE}) with a bounded terminal. It is noteworthy that in light of (\ref{fn bound}), the parameters $(\alpha,\ \beta)$ in the a priori estimates (\ref{eY bound}) and (\ref{U bound}) are replaced by $(3\alpha,\ 3\beta)$ when estimating solutions of BSDE($\xi,\ f^n$).

\begin{thm}
\label{thm Quadratic bounded}
Assuming that (H1)–(H4) and  and (H3')(c) hold, with the additional condition $|\xi|<M_0$, where $M_0$ is a positive constant, the BSDE (\ref{BSDE}) has a unique solution $(Y, U)$ in the space $ \mathcal E\times H_{\nu}^{2,p}$, for all $p\ge 1$.
\end{thm}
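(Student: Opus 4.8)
The strategy I would follow is the classical monotone (Kobylanski-type) approximation scheme, transplanted to the MPP setting and driven by the exponential-moment a priori estimates already available; uniqueness is then free, being exactly Corollary \ref{uniqueness} once a solution in $\mathcal E\times H_\nu^{2,p}$ is produced, so all the content is existence. Assume $f$ is convex in $u$ (Remark \ref{concave01}; the concave case is symmetric via $\tilde f^n$). For $n>C_0$ take the generator $f^n$ of Lemma \ref{lemma monotonicity}: by parts (i) and (v) of that lemma, together with (H3')(c) and $|\xi|<M_0$, the data $(\xi,f^n)$ satisfies (H1), (H2) and (H3'), so Theorem \ref{thm_lip} gives a unique solution $(Y^n,U^n)\in L^2(A)\times H_\nu^{2,2}$ with $Y^n\in S^2$. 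By Lemma \ref{lemma monotonicity}(iv), $f^n$ obeys the two-sided growth bound with $(\alpha,\beta)$ replaced by $(3\alpha,3\beta)$, and (H3)(d) (with $|\xi|<M_0$) makes the integrability hypothesis (\ref{integrability_condition}) hold with these parameters for every $p\ge1$; hence Lemma \ref{submartingle property}(i) yields $Y^n\in\mathcal E$ with $\mathbb E[e^{pY^n_*}]\le\Xi(p,3\alpha,3\beta)$, uniformly in $n$. Feeding $Y^n\in\mathcal E$ into Proposition \ref{priori estimate on U and K} (again with $(3\alpha,3\beta)$) gives, uniformly in $n$ and for all $p,q\ge1$, bounds on $\mathbb E[(\int_{[0,T]}\int_E|U^n_s(e)|^2\phi_s(de)dA_s)^{p/2}]$ and on $\mathbb E[(\int_{[0,T]}\int_E(e^{q\lambda|U^n_s(e)|}-1)^2\phi_s(de)dA_s)^p]$, so in particular $(Y^n,U^n)\in\mathcal E\times H_\nu^{2,p}$ for every $p\ge1$.

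Next I would pass to the monotone limit. Since the infimal convolution $f^n$ is nondecreasing in $n$ and the hypotheses of Theorem \ref{Comparison Theorem for BSDE} hold for $(\xi,f^n)$ and $(\xi,f^{n+1})$ (convexity in $u$ by Lemma \ref{lemma monotonicity}(ii), growth and Lipschitz-in-$y$ by Lemma \ref{lemma monotonicity}(iv) and (i), and $(Y^{n},U^{n}),(Y^{n+1},U^{n+1})\in\mathcal E\times H_\nu^{2,p}$), the comparison theorem gives $Y^n_t\le Y^{n+1}_t$ for all $t$, a.s. With the uniform $\mathcal E$-bound this forces $Y^n\uparrow Y$ pointwise, $Y\in\mathcal E$ by Fatou, $Y^n_t\to Y_t$ in every $L^p$ for fixed $t$, and $Y^n\to Y$ $dA\otimes d\mathbb P$-a.e. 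For the strong convergence of $U^n$, fix $n>m>C_0$, set $\bar Y=Y^n-Y^m\ge0$, $\bar U=U^n-U^m$, $\delta_s=f^n(s,Y^n_s,U^n_s)-f^m(s,Y^m_s,U^m_s)$; It\^o's formula for $\bar Y^2$, using that $A$ is continuous (so the jump times of $\bar Y$ are $dA$-null) and that the arising stochastic integral is a true martingale (integrable bracket by the uniform $H_\nu^{2,p}$-bounds), gives
$$
\mathbb E[\bar Y_t^2]+\mathbb E\Big[\int_t^T\int_E\bar U_s(e)^2\phi_s(de)\,dA_s\Big]=2\,\mathbb E\Big[\int_t^T\bar Y_s\,\delta_s\,dA_s\Big].
$$
From Lemma \ref{lemma monotonicity}(iv) and the elementary bound $e^x+e^{-x}-2\le(e^{|x|}-1)^2$, $|\delta_s|$ is dominated by $6\alpha_s+3\beta(|Y^n_s|+|Y^m_s|)+\tfrac1\lambda\sum_{k\in\{n,m\}}\int_E(e^{\lambda|U^k_s(e)|}-1)^2\phi_s(de)$, a family bounded in $L^p(dA\otimes d\mathbb P)$ uniformly in $n,m$; splitting $\int_0^T\bar Y_s|\delta_s|dA_s$ over $\{\bar Y_s\le\varepsilon\}$ and $\{\bar Y_s>\varepsilon\}$, bounding $\bar Y_s\le Y_s-Y^{m_0}_s$ on the latter, and invoking de la Vall\'ee--Poussin uniform integrability from these bounds shows the right-hand side tends to $0$. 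Hence $(U^n)$ is Cauchy in $H_\nu^{2,2}$, with limit $U\in H_\nu^{2,p}$ for all $p$ by Fatou on a further a.e.-convergent subsequence.

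It then remains to pass to the limit in the equation, which is where the exponential structure is indispensable and which I expect to be the main obstacle. Along a subsequence $U^n_s\to U_s$ in $L^2(\phi_s)$ $dA\otimes d\mathbb P$-a.e.\ and $Y^n_s\to Y_s$ everywhere a.s.; for each $m$, $\liminf_n f^n(s,Y^n_s,U^n_s)\ge\lim_n f^m(s,Y^n_s,U^n_s)=f^m(s,Y_s,U_s)$ by continuity of $f^m$, and letting $m\to\infty$ with Lemma \ref{lemma monotonicity}(iii) gives $\liminf_n f^n(\cdots)\ge f(s,Y_s,U_s)$, while $\limsup_n f^n(s,Y^n_s,U^n_s)\le\limsup_n f(s,Y^n_s,U^n_s)=f(s,Y_s,U_s)$ by continuity of $f$, so $f^n(s,Y^n_s,U^n_s)\to f(s,Y_s,U_s)$ $dA\otimes d\mathbb P$-a.e. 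Since $|f^n(s,Y^n_s,U^n_s)|\le 3\alpha_s+3\beta|Y^n_s|+\tfrac1\lambda\int_E(e^{\lambda|U^n_s(e)|}-1)^2\phi_s(de)$, and $(\int_E(e^{\lambda|u|}-1)^2\phi_s(de))^p\le\int_E(e^{p\lambda|u|}-1)^2\phi_s(de)$ combined with (\ref{e|u|q01}) bounds this sequence in $L^p(dA\otimes d\mathbb P)$ uniformly in $n$ for every $p\ge1$, it is uniformly integrable, and Vitali's theorem gives $f^n(\cdot,Y^n,U^n)\to f(\cdot,Y,U)$ in $L^1(dA\otimes d\mathbb P)$. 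Combining this with $U^n\to U$ in $H_\nu^{2,2}$ and $Y^n_t\to Y_t$ in $L^1$ and letting $n\to\infty$ in the BSDE for $(Y^n,U^n)$ shows that $(Y,U)$ solves (\ref{BSDE}); taking the c\`adl\`ag version of the right-hand side makes $Y$ c\`adl\`ag, and the local-martingale requirements of Definition \ref{def solution} follow from $U\in H_\nu^{2,p}$ for all $p$ and (\ref{e|u|q01}). Uniqueness in $\mathcal E\times H_\nu^{2,p}$ is Corollary \ref{uniqueness}. The crux of the whole argument is thus converting the uniform exponential-moment estimates (\ref{eY bound}) and (\ref{e|u|q01}) into genuine strong $H_\nu^{2,2}$-convergence of $U^n$ and into enough uniform integrability to identify the limit of the nonlinear drift $\int f^n(s,Y^n_s,U^n_s)dA_s$; it is precisely the exponential (rather than merely polynomial) growth that makes (\ref{e|u|q01}) the right tool, mirroring the role of the a priori $L^p$-estimates in the quadratic case of \cite{briand2008quadratic}.
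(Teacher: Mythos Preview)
Your overall strategy---inf-convolution $f^n$, comparison-based monotone limit, It\^o-formula Cauchy argument for $U^n$, and passage to the limit in the drift---matches the paper's, but there is one genuine gap. You apply Lemma \ref{submartingle property} and Proposition \ref{priori estimate on U and K} directly to the Lipschitz solution $(Y^n,U^n)$, yet both results are stated for solutions in the sense of Definition \ref{def solution}, which requires that $\int_0^\cdot\int_E(e^{\pm p\lambda U^n_t(e)}-1)\,q(dsde)$ be local martingales for every $p\ge1$. From Theorem \ref{thm_lip} you only know $U^n\in H_\nu^{2,2}$, and this does \emph{not} guarantee that $e^{\lambda|U^n|}-1$ (let alone higher powers) lies in $H_\nu^{2,loc}$; the It\^o-formula manipulations underlying those a priori estimates therefore produce stochastic integrals that are not a priori well-defined. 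The paper closes this gap by a second truncation: for each $k$ it solves BSDE$(\xi,f^{n,k})$ with $f^{n,k}=(f^n\vee(-k))\wedge k$, obtains $Y^{n,k}\in S^\infty$ (the generator being bounded) and hence $|U^{n,k}|\le 2\|Y^{n,k}\|_{S^\infty}$ via \cite[Corollary 1]{Morlais_2009}, so that Definition \ref{def solution} holds trivially and the estimates (\ref{Ykn})--(\ref{Ukn}) apply with bounds independent of $k$; an $L^2$-stability argument then sends $k\to\infty$ and transfers these bounds to $(Y^n,U^n)$ (Step 1 of the paper's proof).

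Once that gap is filled, your subsequent simplifications are legitimate alternatives to the paper's route. The paper obtains a c\`adl\`ag limit by first proving u.c.p.\ convergence of $Y^n$ via a $\theta$-method estimate on $|Y^n-Y^m|$ (its Steps 3--4); your device of taking the monotone pointwise limit, verifying the BSDE at each fixed $t$, and then replacing $Y$ by the c\`adl\`ag right-hand side avoids that machinery. Likewise, for the drift you invoke Vitali's theorem via the uniform higher-moment bound (\ref{e|u|q01}) (your inequality needs a constant, but $(e^x-1)^{2p}\le C_p(e^{px}-1)^2$ does hold for $x\ge0$), whereas the paper localizes by the stopping times $T_K$ of (\ref{def-TKt}) to reduce to bounded $U^{n,K}$. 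Both shortcuts work and are somewhat cleaner; the $\theta$-method of Steps 3--4 is, however, reused verbatim in the unbounded-terminal proof (Theorem \ref{thm Quadratic unbounded}), which may explain the paper's choice.
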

The uniqueness stated in Theorem \ref{thm Quadratic bounded} follows from Corollary \ref{uniqueness}. Now, let us focus on the existence part of Theorem \ref{thm Quadratic bounded}. It is important to note that we consistently assume $f$ is convex with respect to $u$.

\begin{proof}[Proof of existence in Theorem \ref{thm Quadratic bounded}]
With the aid of Lemma \ref{lemma monotonicity} and Theorem \ref{thm_lip}, for $n>C_0$, there exists a unique solution $(Y^n,U^n)\in L^2(A)\times H_{\nu}^{2,2}$ for the BSDE($\xi,\ f^n$). For $n>C_0$, to find a uniform estimate for $(Y^n,\
 U^n)$, we first consider BSDE ($f^{n,k},\ \xi$), where $f^{n,k}=(f^n\wedge -k)\vee k$, for $k\in\mathbb N$. By employing the definition of truncated generators, it follows that for each $k\ge 1$, $t\in[0,T]$ and $(y,\ y',\ u,\ u') \in \mathbb R \times \mathbb R\times L^2(E,\mathcal{B}(E),\phi_t(\omega,dy))\times L^2(E,\mathcal{B}(E),\phi_t(\omega,dy))$,
$$
|f^{n,k}(t,y,u)-f^{n,k}(t,y',u')|\le|f^n(t,y,u)-f^n(t,y',u')|\le 3\beta|y-y'|+n\|u-u'\|_t.
$$
Thus, making use of Theorem \ref{thm_lip} again, for $n>C_0$, there exists a unique solution $(Y^{n,k},U^{n,k})\in L^2(A)\times H_{\nu}^{2,2}$ for BSDE($\xi,\ f^{n,k}$). Moreover, note that
$$
Y_t^{n,k}=\mathbb E_t\left[\xi+\int_t^Tf^{n,k}(t,Y_t^{n,k},U_t^{n,k})dA_t\right].
$$
It then holds,
\begin{equation}
\label{bdd Ynk}
\left\|Y_t^{n,k}\right\|_{S^\infty}\le M_0+k\|A_T\|_\infty<\infty.
\end{equation}
In addition, by Corollary 1 of \cite{Morlais_2009}, it holds on $[0,T]$, $\mathbb P\text{-a.s.}$ that
\begin{equation}
\label{unkbdd}
|U_s^{n,K}|\le2\left\|Y_t^{n,k}\right\|_{S^\infty}\le 2M_0+2k\|A_T\|_\infty<\infty.
\end{equation}
Notice also that
$$
f^{n,k}(t,y,u)\le(f^{n,k})^+(t,y,u)\le (f^{n})^+(t,y,u)\le 3\alpha_t+3\beta|y|+\frac{1}{\lambda}j_{\lambda}(t,u),
$$
and
$$
f^{n,k}(t,y,u)\ge-(f^{n,k})^-(t,y,u)\ge -(f^{n})^-(t,y,u)\ge -3\alpha_t-3\beta|y|-\frac{1}{\lambda}j_{\lambda}(t,-u),
$$
where $(f^{n,k})^+=f^{n,k}1_{\{f^{n,k}\ge 0\}}$, and $(f^{n,k})^-=-f^{n,k}1_{\{f^{n,k}< 0\}}$.
Then,
\begin{equation}
\label{fnk bound}
-3\alpha_t-3\beta|y|-\frac{1}{\lambda}j_\lambda(t,-u)\leq f^{n,k}(t,y,u)\leq 3\alpha_t+3\beta|y|+\frac{1}{\lambda}j_\lambda(t,u).
\end{equation}
Therefore, considering the a priori estimate provided by Lemma \ref{submartingle property}, along with (\ref{bdd Ynk}), (\ref{unkbdd}) and (\ref{fnk bound}), we can deduce that for every $n>C_0$, $p,q\ge 1$ and $k\ge 1$,
\begin{equation}
\label{Ykn}
\exp \left\{p \lambda\left|Y^{n,k}_t\right|\right\} \leq \mathbb{E}_t\left[\exp \left\{p \lambda e^{3\beta A_T}|\xi|+p\lambda \int_t^T e^{3\beta A_s}3\alpha_s d A_s\right\}\right],
\end{equation}
and
\begin{equation}
\label{Ukn}
\mathbb E\left[\left(\int_0^T\int_E\left(e^{q\lambda |U_t^{n,k}(e)|}-1\right)^2 \phi_t\left(de\right) d A_t\right)^p\right]\le  C_p\Xi(8pq\lambda(1+3\beta\|A_T\|_\infty),3\alpha,3\beta).
\end{equation}

The remaining proof is divided into five steps. First, we establish the convergence of $\{(Y^{n,k}, U^{n,k})\}_k$ to $(Y^n, U^n)$ and demonstrate that the solutions  $(Y^n,U^n)$ satisfy Definition \ref{def solution} with $Y^n\in\mathcal E$. Subsequently, leveraging Lemma \ref{submartingle property} and Proposition \ref{priori estimate on U and K}, we derive a uniform estimate for $\{Y^n, U^n\}$. In the second step, we construct a "presolution" $Y^0$ employing the comparison theorem, complemented by finding a corresponding "presolution" $U^0$. The third step involves deriving an a priori estimate of $|Y^n-Y^m|$. Utilizing this estimate and the uniform convergence in probability (u.c.p.) of the sequence $Y^n$, we obtain the c\`adl\`ag version of $Y^0$, denoted as $\tilde Y^0$, in the fourth step. Finally, in the last step, we confirm that $(\tilde Y^0, U^0)$ constitutes a solution within appropriate spaces.

\textbf{Step 1: The convergence of the sequence $\{(Y^{n,k}, U^{n,k})\}_k$ to $(Y^n, U^n)$.}

In this step, we demonstrate that $\lim_{k\to\infty}\mathbb E\left[|Y^n_t-Y^{n,k}_t|^2\right]=0$ and $\lim_{k\to\infty}\mathbb E\left[\int_0^T\int_E\left|U^{n,k}_t-U^{n}_t\right|^2\phi_t(de)dA_t\right]=0$, for a fixed $n>C_0$, for a fixed $n>C_0$. From It\^o’s formula,
$$
\begin{aligned}
d(Y^{n,k}_t-Y^n_t)^2 &=2(Y_t^{n,k}-Y_t^{n})d(Y_t^{n,k}-Y_t^n)+\int_E|U_t^{n,k}-U_t^n|^2p(dtde)\\
&=2(Y_t^{n,k}-Y_t^n)\left(-f^{n,k}(t,Y_t^{n,k},U_t^{n,k})+f^{n}(t,Y_t^n,U_t^n)\right)dA_t+2(Y_t^{n,k}-Y_t^n)\int_E(U_t^{n,k}-U_t^n)q(dtde)\\
&\quad+\int_E\left|U_t^{n,k}-U_t^n\right|^2q(dtde)+\int_E\left|U_t^{n,k}-U_t^n\right|^2\phi_t(de)dA_t.
\end{aligned}
$$
Then, by integrating from $t$ to $T$ and rearranging the terms, and considering the Lipschitz conditions of $f^{n,k}$, we obtain, owing to $Y_T^{n,k}=Y_T^n=\xi$,
$$
\begin{aligned}
&|Y^{n,k}_t-Y^n_t|^2+\int_t^T\int_E\left|U_s^{n,k}-U_s^n\right|^2\phi_s(de)dA_s\\
&=2\int_t^T(Y_{s}^{n,k}-Y_{s}^n)\left(f^{n,k}(s,Y_s^{n,k},U_s^{n,k})-f^{n}(s,Y_s^n,U_s^n)\right)dA_s-\int_t^{T}2(Y_{s^-}^{n,k}-Y_{s^-}^n)\int_E(U_s^{n,k}-U_s^n)q(dsde)\\
&\quad\quad-\int_{t}^{T}\int_E\left|U_s^{n,k}-U_s^n\right|^2q(dsde)\\
&\le 2\int_t^T|Y_s^{n,k}-Y_s^n|\left(\left|f^{n,k}(s,Y_s^{n,k},U_s^{n,k})-f^{n,k}(s,Y_s^n,U_s^n)\right|+\left|f^{n,k}(s,Y_s^{n},U_s^{n})-f^{n}(s,Y_s^n,U_s^n)\right|\right)dA_s\\
&\quad\quad-\int_t^{T}2(Y_{s^-}^{n,k}-Y_{s^-}^n)\int_E(U_s^{n,k}-U_s^n)q(dsde)-\int_{t}^{T}\int_E\left|U_s^{n,k}-U_s^n\right|^2q(dsde)\\
&\le 2\int_t^T\left(3\beta|Y_s^{n,k}-Y_s^n|^2+n|Y_s^{n,k}-Y_s^n|\|U_s^{n,k}-U_s^n\|_s+|Y_s^{n,k}-Y_s^n|\left|f^{n,k}(s,Y_s^{n},U_s^{n})-f^{n}(s,Y_s^n,U_s^n)\right|\right)dA_s\\
&\quad\quad-\int_t^{T}2(Y_{s^-}^{n,k}-Y_{s^-}^n)\int_E(U_s^{n,k}-U_s^n)q(dsde)-\int_{t}^{T}\int_E\left|U_s^{n,k}-U_s^n\right|^2q(dsde)\\
&\le 2\int_t^T\left(3\beta|Y_s^{n,k}-Y_s^n|^2+{n^2}|Y_s^{n,k}-Y_s^n|^2+\frac{1}{4}\|U_s^{n,k}-U_s^n\|_s^2+\frac{1}{2}|Y_s^{n,k}-Y_s^n|^2\right.\\
&\quad\quad\quad\quad\left.+\frac{1}{2}\left|f^{n,k}(s,Y_s^{n},U_s^{n})-f^{n}(s,Y_s^n,U_s^n)\right|^2\right)dA_s\\
&\quad\quad-\int_t^{T}2(Y_{s^-}^{n,k}-Y_{s^-}^n)\int_E(U_s^{n,k}-U_s^n)q(dsde)-\int_{t}^{T}\int_E\left|U_s^{n,k}-U_s^n\right|^2q(dsde),
\end{aligned}
$$
where we make use of the inequality of arithmetic and geometric means in the last line. Now, rearranging the terms again, we obtain
\begin{equation}
\label{grown1}
\begin{aligned}
&|Y^{n,k}_t-Y^n_t|^2+\frac{1}{2}\int_t^T\int_E\left|U_s^{n,k}-U_s^n\right|^2\phi_s(de)dA_s\\
&\quad\le (1+2n^2+6\beta)\int_t^T|Y_s^{n,k}-Y_s^n|^2dA_s+\int_t^T\left|f^{n,k}(s,Y_s^{n},U_s^{n})-f^{n}(s,Y_s^n,U_s^n)\right|^2dA_s\\
&\quad\quad-\int_t^{T}2(Y_{s^-}^{n,k}-Y_{s^-}^n)\int_E(U_s^{n,k}-U_s^n)q(dsde)-\int_{t}^{T}\int_E\left|U_s^{n,k}-U_s^n\right|^2q(dsde).
\end{aligned}
\end{equation}
Inspired by the proof of stochastic Gronwall inequality (as detailed in \cite[Theorem 4]{SCHEUTZOW_2013}), for any given $t\in[0,T]$, applying It\^o's formula to the process
$$
\exp\left\{\int_{t}^{\cdot}{(1+2n^2+6\beta)}dA_s\right\}|Y^{n,k}_{\cdot}-Y^n_{\cdot}|^2,
$$
by (\ref{grown1}), it turns out that on $(t,T]$,
\begin{equation}
\label{eqgr}
\begin{aligned}
&d\left(\exp\left\{\int_{t}^s{(1+2n^2+6\beta)}dA_u\right\}|Y^{n,k}_s-Y^n_s|^2\right)\\
&\ge\exp\left\{\int_{t}^s{(1+2n^2+6\beta)}dA_u\right\}\left(-\left|f^{n,k}(s,Y_s^{n},U_s^{n})-f^{n}(s,Y_s^n,U_s^n)\right|^2dA_s\right.\\
&\left.+2(Y_{s^-}^{n,k}-Y_{s^-}^n)\int_E(U_s^{n,k}-U_s^n)q(dsde)+\int_E\left|U_s^{n,k}-U_s^n\right|^2q(dsde)\right).
\end{aligned}
\end{equation}
Note the fact that 
$$
\int_t^{\cdot}\exp\left\{\int_{t}^s{(1+2n^2+6\beta)}dA_u\right\}2(Y_{s^-}^{n,k}-Y_{s^-}^n)\int_E(U_s^{n,k}-U_s^n)q(dsde)$$ 
and
$$
\int_{t}^{\cdot}\exp\left\{\int_{t}^s{(1+2n^2+6\beta)}dA_u\right\}\int_E\left|U_s^{n,k}-U_s^n\right|^2q(dsde)
$$
are martingales, owing to the integrability conditions $Y^{n,k},\ Y^{n}\in L^2(A)$ and $U^{n,k},\ U^n\in H_{\nu}^{2,2}$ thanks to Theorem \ref{thm_lip}. 
Then, taking integration on $(t,T]$ and followed by taking expectation on both sides of (\ref{eqgr}), we find that
$$
\mathbb E[|Y^{n,k}_t-Y^n_t|^2]\le \mathbb E\left[\int_t^T\left|f^{n,k}(s,Y_s^{n},U_s^{n})-f^{n}(s,Y_s^n,U_s^n)\right|^2dA_s\right]\exp\{(1+2n^2+6\beta)\|A_T\|_\infty\}.
$$
Recall that $f^{n,k}\to f^n$ as $k\to\infty$ and by the dominated convergence theorem, it holds that
$$
\lim_{k\to\infty}\mathbb E[|Y^{n,k}_t-Y^n_t|^2]=0.
$$
Then, up to a subsequence, we have $\mathbb P\text{-a.s.}$, $Y_t^{n,k}\to Y_t^n$ as $k\to\infty$. Consequently, leveraging (\ref{Ykn}), for $n>C_0$, we obtain,
\begin{equation}
\label{sub mart |yn|}
\exp \left\{p \lambda\left|Y^{n}_t\right|\right\} \leq \mathbb{E}_t\left[\exp \left\{p \lambda e^{3\beta A_T}|\xi|+p\lambda \int_t^T e^{3\beta A_s}3\alpha_s d A_s\right\}\right].
\end{equation}
Therefore, $Y^n\in\mathcal E$, for $n>C_0$. Moreover, in (\ref{grown1}), we first take $t=0$ , then take expectation on both sides and finally let $k\to\infty$ to conclude that thanks to dominated convergence theorem, it holds
\begin{equation}
\label{cvgz unk01}
\lim_{k\to\infty}\mathbb E\left[\int_0^T\int_E\left|U_s^{n,k}-U_s^n\right|^2\phi_s(de)dA_s\right]=0.
\end{equation}
Then, up to a subsequence, we have 
$$
\lim_{k\to\infty}U_t^{n,k}(e)=U_t^n(e),\ \phi_t(\omega, d e) d A_t(\omega) \mathbb{P}(d \omega)-a.e.
$$
Therefore, in view of (\ref{Ukn}) and Fatou's Lemma, for each  $p,\ q\ge 1$,
\begin{equation}
\label{Un bound01}
\sup_{n>C_0}\mathbb E\left[\left(\int_0^T\int_E\left(e^{q\lambda |U_t^{n}(e)|}-1\right)^2 \phi_t\left(de\right) d A_t\right)^p\right]\le  C_p\Xi(8pq\lambda(1+3\beta\|A_T\|_\infty),3\alpha,3\beta).
\end{equation}
Hence, the sequence $\{U^n\}_n$ satisfies Definition \ref{def solution}, and using Proposition \ref{priori estimate on U and K}, we derive
\begin{equation}
\label{Un bound}
\sup_{n>C_0}\mathbb E\left[\left(\int_0^T\int_E|U_t^n(e)|^2\phi_t(de)dA_t\right)^{p/2}\right]\le C_p\Xi(8p\lambda(1+3\beta\|A_T\|_\infty),3\alpha,3\beta)<\infty.
\end{equation}

\textbf{Step 2: Construction of "presolution" $(Y^0, U^0)$. }

Recall that for $n>C_0$, $(Y^n, U^n)\in \mathcal E\times H_{\nu}^{2,2}$ is the unique solution to BSDE($\xi,\ f^n$) and $f^n\uparrow f$. Then, $Y^n_t$ increases with respect to $n$ for each $t\in[0,T]$. Let $Y_t^n\uparrow Y_t^0$ as $n\to\infty$. Then, the process $Y^0$ is adapted and admits a progressive version. Hence, we assume that the process $Y^0$ is progressively measurable. Moreover, by Fatou’s lemma, we have,
\begin{equation}
\label{Y0}
\mathbb E\left[\int_0^T|Y^0_s|^2dA_s\right]\le\mathbb E[\sup_{n>C_0}|Y^n_*|^2]\|A_T\|_\infty<\infty.
\end{equation}

Next, we find a "presolution" $U^0$. For any $m,\ n >C_0$, applying It\^o's formula to the process $\left|Y^n-Y^m\right|^2$, we can deduce that
\begin{equation}
\label{U-cauchy}
\begin{aligned}
&\int_0^T\int_E\left|U_s^n(e)-U_s^m(e)\right|^2 \phi_s(de)d A_s\\
&= -\left|Y_0^n-Y_0^m\right|^2+2 \int_0^T\left(Y_{s^-}^n-Y_{s^-}^m\right)\left(f^n\left(s, Y_s^n, U_s^n\right)-f^m\left(s, Y_s^m, U_s^m\right)\right) d A_s \\
&\quad -2 \int_0^T\int_E\left(\left(Y_{s^-}^n-Y_{s^-}^{m}\right)\left(U_s^n-U_s^m\right)+|U_s^n-U_s^m|^2\right) q(dsde) \\
&\leq 2\left(\int_0^T\left|Y_t^n-Y_t^m\right|^2dA_t\right)^{1/2}\left(\int_0^T\left(f^n\left(s, Y_s^n, U_s^n\right)-f^m\left(s, Y_s^m, U_s^m\right)\right)^2dA_t \right)^{1/2}\\
& \quad -\left|Y_0^n-Y_0^m\right|^2-2 \int_0^T\int_E\left(\left(Y_{s^-}^n-Y_{s^-}^{m}\right)\left(U_s^n-U_s^m\right)+|U_s^n-U_s^m|^2\right) q(dsde)\\
&\leq 2(\int_0^T\left|Y_t^n-Y_t^m\right|^2dA_t)^{1/2}\left(\int_0^T2\left(3\alpha_s+3\beta|Y_s^m|+\frac{1}{\lambda} \left(j_\lambda(U_s^m)+j_{\lambda}(-U_s^m)\right)\right)^2d A_s\right.\\
&\left.\quad+\int_0^T2\left(3\alpha_s+3\beta|Y_s^n|+\frac{1}{\lambda} \left(j_\lambda(U_s^n)+j_{\lambda}(-U_s^n)\right)\right)^2dA_s\right)^{1/2}\\
& \quad -\left|Y_0^n-Y_0^m\right|^2-2 \int_0^T\int_E\left(\left(Y_{s^-}^n-Y_{s^-}^{m}\right)\left(U_s^n-U_s^m\right)+|U^n-U^m|^2\right) q(dsde), \quad \mathbb P \text {-a.s.}
\end{aligned}
\end{equation}
The last term is a martingale due to the integrability condition of $Y^n,\ Y^m$ and $U^n,\ U^m$. Then, by taking expectations and applying H\"older’s inequality, we infer from (\ref{sub mart |yn|}) and (\ref{e|u|}) in the proof of {Proposition \ref{priori estimate on U and K}} that there exists a constant $c>0$, which may vary, such that
$$
\begin{aligned}
\mathbb E\left[\left(\int_0^T\int_E\left|U_s^n-U_s^m\right|^2\phi_s(e) d A_s\right)\right]&\le c\left(\mathbb E[\int_0^T\left|Y_t^n-Y_t^m\right|^2dA_t]\right)^{1/2}-\mathbb E[\left|Y_0^n-Y_0^m\right|^2]\\
&\le c\left(\mathbb E[\int_0^T\left|Y_t^n-Y_t^0\right|^2dA_t]+\mathbb E[\int_0^T\left|Y_t^m-Y_t^0\right|^2dA_t]\right)^{1/2}+2\mathbb E[\left|Y_0^n-Y_0^0\right|^2]+2\mathbb E[\left|Y_0^m-Y_0^0\right|^2].
\end{aligned}
$$

Hence, it follows from (\ref{Y0}) and the monotone convergence theorem that
$$
\lim _{N\rightarrow \infty} \sup_{m,n\ge N}\mathbb E\left[\left(\int_0^T\int_E\left|U_s^n(e)-U_s^m(e)\right|^2\phi_s(de) d A_s\right)\right]=0.
$$
Thus, $\{U^n\}$ is a Cauchy sequence in $H_\nu^{2,2}$, implying the existence of a $U^0\in H_\nu^{2,2}$ such that
\begin{equation}
\label{estimate-U}
\lim _{ n \rightarrow \infty} \mathbb E\left[\left(\int_0^T\int_E\left|U_s^n(e)-U_s^0(e)\right|^2\phi_s(de) d A_s\right)\right]=0.
\end{equation}
Thus, up to a subsequence,
$$
\lim _{ n \rightarrow \infty} \left(\int_0^T\int_E\left|U_s^n(e)-U_s^0(e)\right|^2\phi_s(de) d A_s\right)=0, \ \mathbb P\text{-a.s.}
$$
Indeed, thanks to Proposition \ref{priori estimate on U and K} and Fatou’s lemma, $U^0\in H_{\nu}^{2,p}$ for each $p\ge 1$. Specifically, for some positive constant $C_p$ depending on $p$,
\begin{equation}
\label{u0}
\mathbb E\left[\left(\int_0^T\int_E|U_t^0(e)|^2\phi_t(de)dA_t\right)^p\right]\le C_p\sup_{n>C_0}\mathbb E\left[\left(\int_0^T\int_E|U_t^n(e)|^2\phi_t(de)dA_t\right)^p\right]<\infty.
\end{equation}
In the same manner, for each $p\ ,q\ge 1$,
\begin{equation}
\label{U0e}
\mathbb E\left[\left(\int_0^T\int_E\left(e^{q\lambda |U_t^{0}(e)|}-1\right)^2 \phi_t\left(de\right) d A_t\right)^p\right]\le C_p\sup_{n>C_0}\mathbb E\left[\left(\int_0^T\int_E\left(e^{q\lambda |U_t^{n}(e)|}-1\right)^2 \phi_t\left(de\right) d A_t\right)^p\right]<\infty.
\end{equation}

Moreover, in the set $\{(t,\omega)\in[0,T]\times \Omega:dA_t(\omega)\not=0\}$, it holds that
$$
\lim _{n \rightarrow \infty} \left(\int_E\left|U_t^n(e)-U_t^0(e)\right|^2\phi_t(de) \right)=0.
$$

\textbf{Step 3: A priori estimate of $|Y^n-Y^m|$.}
For $m,\ n>C_0,\ \theta \in(0,1)$, next we will show that $\mathbb P$-a.s.
$$
\left|Y_t^n-Y_t^m\right| \leq(1-\theta)\left(\left|Y_t^m\right|+\left|Y_t^n\right|\right)+\frac{1-\theta}{\lambda} \ln \left(\sum_{i=1}^2 J_t^{m,n, i}\right), \quad t \in[0, T],
$$
where $\zeta_\theta \triangleq \frac{\lambda e^{3\beta \|A_T\|_\infty}}{1-\theta}$, and $J_t^{n,m, i} \triangleq E\left[J_T^{n,m, i} \mid \mathcal{F}_t\right]$ for $i=1,2$ such that
$$
\begin{aligned}
& J_T^{n,m, 1} \triangleq (D^m_T+D^n_T) \eta \text { with }
\eta \triangleq \exp \left\{\zeta_\theta e^{3\beta\|A_T\|}\left(1-\theta\right)|\xi|\right\}\le\exp\{\lambda e^{6\beta\|A_T\|_\infty}|\xi|\},\\
&D^m_t \triangleq \exp \left\{\lambda e^{6 \beta \|A_T\|} \int_0^t\left(3\alpha_s+6\beta\left|Y_s^m\right|\right) d A_s\right\},\\
&D^n_t \triangleq \exp \left\{\lambda e^{6 \beta \|A_T\|} \int_0^t\left(3\alpha_s+6\beta\left|Y_s^n\right|\right) d A_s\right\},t \in[0, T],\\
& J_T^{n,m,2}\triangleq \zeta_\theta e^{3\beta \|A_T\|} (D^m_T+D^n_T) \Upsilon_{n,m} \int_0^T\left|\Delta_{n,m} f(s)\right| d A_s,\\
&\Upsilon_{n,m} \triangleq \exp \left\{\zeta_\theta e^{3\beta\|A_T\|}\left(Y_*^n+Y_*^m\right)\right\},\\
&\Delta_{n,m} f(t) \triangleq |f^n\left(t, Y_t^m, U_t^m\right)-f^m\left(t, Y_t^m, U_t^m\right)|+|f^m\left(t, Y_t^n, U_t^n\right)-f^n\left(t, Y_t^n, U_t^n\right)|, t \in[0, T].
\end{aligned}
$$

With the aforementioned notations, we show
\begin{equation}
\label{gamma01}
\exp \left(\zeta_\theta e^{\tilde{A}_t^{n, m}} \tilde{Y}_t^{(n, m)}\right):=\Gamma_t^{n, m} \leq \underbrace{\mathbb{E}\left(J_T^{m, n, 1}+J_T^{m, n, 2} \mid \mathcal{F}_t\right)}_{=J_t^{m, n, 1}+J_t^{m, n, 2}}.
\end{equation}

We set $\tilde Y^{(n,m)} \triangleq Y^n-\theta Y^m, \tilde U^{(n,m)} \triangleq U^n-\theta U^m$ and define two processes
$$
a_t^{(n,m)} \triangleq \mathbf{1}_{\left\{\tilde Y_t^{(n,m)} \neq 0\right\}} \frac{f^n\left(t, Y_t^n, U_t^n\right)-f^n\left(t, \theta Y_t^ m, U_t^n\right)}{\tilde Y_t^{(n,m)}}-\beta \mathbf{1}_{\left\{\tilde Y_t^{(n,m)}=0\right\}}, \quad \tilde A_t^{(n,m)} \triangleq \int_0^t a_s^{(n,m)} d A_s, \quad t \in[0, T].
$$

Applying It\^o’s formula to the process $\Gamma^{n,m}$ defined in (\ref{gamma01}) on $s \in(t, T]$ yields
\begin{equation}
\label{ess01}
\begin{aligned}
\Gamma_t^{n,m}& =\Gamma_{T}^{n,m}+\int_t^{T} \Gamma_{s}^{n,m}\left[\zeta_\theta e^{\tilde A^{(n,m)}_s}F^{n,m}_s-\int_E(e^{\zeta_\theta e^{\tilde A^{(n,m)}_s}\tilde U^{(n,m)}_s}-\zeta_\theta e^{\tilde A^{(n,m)}_s}\tilde U^{(n,m)}_s-1)\phi_s(de)\right]dA_s\\
\quad &\quad -\int_t^{T}\int_E\Gamma_{s^-}^{n,m}(e^{\zeta_\theta e^{\tilde A^{(n,m)}_s}\tilde U^{n,m}_s}-1)q(dsde)\\
&:=\Gamma_{T}^{n,m}+\int_t^{ T} G^{n,m}_s d A_s-\int_t^{T} \int_E Q^{n,m}_s q(dsde),
\end{aligned}
\end{equation}
where,
\begin{equation}
\label{Fnm}
F_s^{n,m}=f^n\left(s, Y^{n}_s, U^{n}_s\right)-\theta {f}^m\left(s, {Y}^{m}_s, {U}^{m}_s\right)-a^{(n,m)}_s \tilde Y^{(n,m)}_s,
\end{equation}
\begin{equation}
\label{G}
G_s^{n,m}=\zeta_\theta \Gamma_s^{n,m} e^{\tilde A^{(n,m)}_s}\left(f^n\left(s, Y^{n}_s, U^{n}_s\right)-\theta {f}^m\left(s, {Y}^{m}_s, {U}^{m}_s\right)-a^{(n,m)}_s \tilde Y^{(n,m)}_s\right)-\Gamma_s^{n,m} j_1(\zeta_\theta e^{\tilde A^{(n,m)}_s}\tilde U_s^{(n,m)}).
\end{equation}
Similar to (\ref{f-bound}), (\ref{fn bound}) and the convexity of $f^n$ in $u$ imply that $d s \otimes d P$-a.e.
$$
f^n\left(s, Y_s^{m}, U_s^n\right) \leq \theta f^n\left(s, Y_s^m, U_s^m\right)+ (1-\theta)\left(3\alpha_s+3\beta\left|Y_s^m\right|\right)+\frac{1-\theta}{\lambda} j_{\lambda}(\frac{\tilde U_s^{(n,m)}}{1-\theta}),
$$
which with (\ref{G}) implies that $d s \otimes d P$-a.e.
$$
\begin{aligned}
G_s^{ n,m} & =\zeta_\theta \Gamma_s^{n,m} e^{\tilde A_s^{(n,m)}} \left(f^n\left(s, \theta Y_s^m, U_s^n\right)-\theta f^m\left(s, Y_s^m, U_s^m\right)\right)-\Gamma_s^{n,m} j_1(\zeta_\theta e^{\tilde A_s^{(n,m)}}\tilde U_s^{(n,m)})\\
& \leq \zeta_\theta \Gamma_s^{n,m} e^{\tilde A_s^{(n,m)}} \left(\left|f^n\left(s, \theta Y_s^m, U_s^n\right)-f^n\left(s, Y_s^m, U_s^n\right)\right|+f^n\left(s, Y_s^m, U_s^n\right)-\theta f^m\left(s, Y_s^m, U_s^m\right)\right)- \Gamma_s^{n,m} j_1(\zeta_\theta e^{\tilde A_s^{(n,m)}}\tilde U_s^{(n,m)})\\
& \le\lambda e^{6 \beta\|A_T\|} \Gamma_t^{n,m}\left(3\alpha_s+6\beta\left|Y_s^m\right|\right)+\zeta_\theta e^{2\beta \|A_T\|} \Gamma_s^{n,m}\left|\Delta_{n,m} f(s)\right|.
\end{aligned}
$$ Integration by parts gives
\begin{equation}
\label{ess04}
\begin{aligned}
\Gamma_t^{n,m} & \leq D_t^m \Gamma_t^{n,m} \\
&\leq D_{ T}^m \Gamma_{ T}^{n,m}-\zeta_\theta \int_t^{ T}\int_E D^m_s \Gamma_{s^-}^{n,m}(e^{\zeta_\theta e^{\tilde A^{n,m}_s}\tilde U^{n,m}_s}-1)q(dsde)\\
&\quad +\zeta_\theta e^{3\beta \|A_T\|} \int_t^{T} D^{m}_s \Gamma_s^{ n,m}\left|\Delta_{n,m} f(s)\right| d A_s\\
& \leq D_T^m\eta-\zeta_\theta \int_t^{ T}\int_E D^m_s \Gamma_{s^-}^{n,m}(e^{\zeta_\theta e^{\tilde A^{n,m}_s}\tilde U^{n,m}_s}-1)q(dsde)+J_T^{n,m,2}.
\end{aligned}
\end{equation}

For each $p \in(1, \infty)$, (\ref{sub mart |yn|}) and (\ref{Un bound}) imply that

\begin{equation}
\label{uniform bounded Y U}
\sup _{n >C_0}\mathbb E\left[e^{p \lambda Y_*^{n}}+\left(\int_0^T\int_E\left|U_s^{n}\right|^2\phi_t(de) d A_s\right)^p\right] \leq C_p \Xi\left(16 p \lambda(1+3\beta\|A_T\|_\infty),\ 3\alpha,\ 3\beta\right),
\end{equation}
where we use the notation $\Xi$ defined in Lemma \ref{submartingle property} to simplify notation. Thus, it follows that for $m,\ n>C_0$,
\begin{equation}
\label{ess-eta-nm}
\mathbb E\left[\eta^p\right] \leq\mathbb E\left[e^{p \zeta_\theta e^{3\beta \|A_T\|}(1-\theta)\left|\xi\right|}\right]\le\mathbb E\left[e^{p \lambda e^{6\beta \|A_T\|}\left|\xi\right|}\right] \leq \Xi\left(p\lambda e^{6\beta \|A_T\|},\ 3\alpha,\ 3\beta\right).
\end{equation}
\begin{equation}
\label{ess02}
\begin{aligned}
\mathbb E\left[\Upsilon_{n,m}^p\right] & \leq \frac{1}{2}\mathbb E\left[e^{2 p \zeta_\theta e^{3\beta \|A_T\|} Y_*^n}+e^{2 p \zeta_\theta e^{3\beta \|A_T\|} Y_*^m}\right] \leq {\Xi}\left(\frac{2 p\lambda}{1-\theta} e^{6 \beta \|A_T\|},\ 3\alpha,\ 3\beta\right).\\
\end{aligned}
\end{equation}

\begin{equation}
\label{ess03}
\begin{aligned}
&\mathbb E\left[\left(\int_0^T\left|\Delta_{n,m} f(s)\right| d A_s\right)^p\right]\\
&\quad\leq \mathbb E\left[\left(2\int_0^T3\alpha_sdA_s+ 2 \|A_T\|_\infty\left(3\beta Y_*^m+3\beta Y_*^n\right)+2\int_0^T\left(j_{\lambda}(U^m)+j_{\lambda}(-U^m)+j_{\lambda}(U^n)+j_{\lambda}(-U^n) \right)d A_s\right)^p\right].
\end{aligned}
\end{equation}
From (\ref{uniform bounded Y U}) and $D_T^m \leq \exp \left\{\lambda e^{6 \beta \|A_T\|} \int_0^T\left(3\alpha_t+6\beta Y_*^n\right) d A_s\right\},\ \mathbb \mathbb P$-a.s., we obtain that $D_T^m \in \mathbb{L}^p\left(\mathcal{F}_T\right)$. Thus, the random variables $J_T^{n,m, i},\ i=1,2$ are integrable with Young’s inequality and (\ref{ess-eta-nm})-(\ref{ess03}). In addition, $\eta^{n,m}_{T_K^t\wedge T}\le \exp\{\frac{\lambda e^{6\beta\|A_T\|_\infty}}{1-\theta}(Y^n_*+Y^m_*)\}$, where the right-hand side of the inequality is also integrable.

Note that $\int_t^{\cdot}\int_E D^m_s \Gamma_{s^-}^{n,m}(e^{\zeta_\theta e^{\tilde A^{(n,m)}_s}\tilde U^{(n,m)}_s}-1)q(dsde)$ is a true martingale due to the estimates (\ref{sub mart |yn|}) and (\ref{Un bound01}).
Taking $\mathbb E\left[\cdot \mid \mathcal{F}_t\right]$ in (\ref{ess04}),
it then yields
$$
\Gamma_t^{n,m} \leq \mathbb E_t[D_T^m\eta] +J_t^{n, m, 2}\le J_t^{n, m, 1}+J_t^{n, m, 2},\ \mathbb P\text{-a.s.}
$$
It then follows that
$$
Y_t^n-\theta Y_t^m \leq \frac{1-\theta}{\lambda} e^{-3\beta\|A_T\|-\tilde A_t^{n,m}} \ln \left(\sum_{i=1}^2 J_t^{n,m, i}\right) \leq \frac{1-\theta}{\lambda} \ln \left(\sum_{i=1}^2J_t^{n,m, i}\right), \quad \mathbb P \text {-a.s. }
$$
which implies
$$
Y_t^n-Y_t^m \leq(1-\theta)\left|Y_t^m\right|+\frac{1-\theta}{\lambda} \ln \left(\sum_{i=1}^2 J_t^{n,m, i}\right)\leq(1-\theta)(\left|Y_t^m\right|+|Y_t^n|)+\frac{1-\theta}{\lambda} \ln \left(\sum_{i=1}^2 J_t^{n,m, i}\right), \quad \mathbb P \text {-a.s. }
$$
Exchanging the roles of $Y^m$ and $Y^n$, we deduce the other side of the inequality.
Hence, for $m,\ n>C_0$,
\begin{equation}
\label{ess05}
|Y_t^m-Y_t^n| \leq(1-\theta)(\left|Y_t^m\right|+\left|Y_t^n\right|)+\frac{1-\theta}{\lambda} \ln \left(\sum_{i=1}^2 J_t^{m,n, i}\right), \quad \mathbb P \text {-a.s. }.
\end{equation}

\textbf{Step 4: U.C.P. convergence of the sequence $Y^n$. } From (\ref{ess05}), we apply Doob’s martingale inequality and H\"older’s inequality to have, for any $\delta>0$,
\begin{equation}
\label{prob}
\begin{aligned}
& \mathbb P\left(\sup _{t \in[0, T]}\left|Y_t^n-Y_t^m\right| \geq \delta\right) \leq \mathbb P\left((1-\theta)\left(Y_*^m+Y_*^n\right) \geq \delta / 2\right)+\mathbb P\left(\frac{1-\theta}{\lambda} \ln \left(\sum_{i=1}^2 J_*^{n,m, i}\right) \geq \delta / 2\right) \\
& \leq 2 \frac{1-\theta}{\delta}\mathbb E\left[Y_*^m+Y_*^n\right]+\sum_{i=1}^2 \mathbb P\left(J_*^{n,m, i} \geq \frac{1}{2} e^{\frac{\delta \lambda}{2(1-\theta)}}\right) \leq \frac{1-\theta}{\delta \lambda} \mathbb E\left[e^{2 \lambda Y_*^m}+e^{2 \lambda Y_*^n}\right]+2 e^{\frac{-\delta \lambda}{2(1-\theta)}} \sum_{i=1}^2 \mathbb E\left[J_*^{n,m, i}\right] \\
& \leq 2 \frac{1-\theta}{\delta \lambda} {\Xi}(2\lambda,\ 3\alpha, \ 3\beta)+4 e^{\frac{-\delta \lambda}{2(1-\theta)}}\left(\mathbb E\left[\left(D^m_T+D^n_T\right)^2\eta^2\right]\right)^{1/2}\\
&\quad+4 e^{\frac{-\delta \lambda}{2(1-\theta)}}\left(\mathbb E\left[\left(D^m_T+D^n_T\right)^2\left(\zeta_\theta e^{3\beta \|A_T\|} \Upsilon_{n,m} \int_0^T\left|\Delta_{n,m} f(s)\right| d A_s\right)^2 \right]\right)^{1/2}\\
&\leq 2 \frac{1-\theta}{\delta \lambda} {\Xi}(2\lambda,\ 3\alpha, \ 3\beta)+4 e^{\frac{-\delta \lambda}{2(1-\theta)}}\sup_{m,n>C_0}\left(\mathbb E\left[\left(D^m_T+D^n_T\right)^2\exp\{2\lambda e^{6\beta\|A_T\|_\infty}|\xi|\}\right]\right)^{1/2}\\
&\quad\quad+4 e^{\frac{-\delta \lambda}{2(1-\theta)}}\left(\sup_{m,n>C_0}\mathbb E\left[\left(D^m_T+D^n_T\right)^4\left(\zeta_\theta e^{3\beta \|A_T\|} \Upsilon_{n,m}\right)^4\right]\right)^{1/4}\left(\mathbb E\left[\left(\int_0^T\left|\Delta_{n,m} f(s)\right| d A_s\right)^4\right]\right)^{1/4}.
\end{aligned}
\end{equation}

To prove the u.c.p. convergence of the sequence $Y^n$, we first deal with the integral term $ \int_0^T\left|\Delta_{n,m} f(s)\right| d A_s$.
Consider a sequence of stopping times $\left(T_K\right)_{K \geq 0}$ defined as
\begin{equation}
\label{def-TKt}
T_K=\inf \left\{s>0, \mathbb{E}\left[\exp \left(e^ {3\beta
A_T}\lambda\left|\xi\right|+\int_0^T \lambda e^{3\beta A_s}3\alpha_s d A_s\right) \mid \mathcal{F}_s\right]>e^{\lambda K}\right\}.
\end{equation}
Relying on (\ref{integrability_condition}) in Lemma \ref{submartingle property} and (\ref{sub mart |yn|}), we deduce that$\left(T_K\right)_{K \geq 0}$ tends to $+\infty$ and $\sup_{n>C_0}\|Y^{n}_{\cdot}1_{\cdot<T_K}\|_{S^{\infty}(t,T]}\le K$. Thus, in view of \cite[Corollary 1]{Morlais_2009}, $\mathbb P\text{-a.s.}$, one can replace the process $U^n$ by the process $U^{n,K}$ such that $|U_s^{n,K}|\le 2K$ on $(t,T_K)$. 
Hence, $\mathbb P$-a.s., for $t<T_K$, on $\{|y|\le K,\|u\|_t\le 2K\}$, $|f(t,y,u)|\le 3\alpha_t+3\beta|y|+\frac{2}{\lambda}e^{2\lambda K }.$ Then, inspired by \cite[Lemma 1]{Lepeltier_1997}, on $\{(\omega,t):\ dA_t(\omega)>0\}\cap\{t< T_K\}$, $\lim_{n\to\infty}f^n(t,Y_t^n,U^n_t)=f(t,Y^0_t,U^0_t)$. Similarly, it can also be proven that $\lim_{N\to\infty}\sup_{m,n>N}|f^n(t,Y_t^m,U^m_t)-f(t,Y^0_t,U^0_t)|=0$.
Note that
$$
\Delta_{n,m}f(t)=\left|f^n\left(t, \omega, Y_t^m, U_t^m\right)-f^m\left(t, \omega, Y_t^m, U_t^m\right)\right|+ \left|f^m\left(t, \omega, Y_t^n, U_t^n\right)-f^n\left(t, \omega, Y_t^n, U_t^n\right)\right|.
$$
It is then apparent that on the set $\{(\omega,t):\ dA_t(\omega)>0\}\cap\{(\omega,t):\ t< T_K\}$,
$$
\lim _{N \rightarrow \infty}\sup_{m,n>N}\Delta_{n,m}f(t,\omega)=0.
$$
Therefore, given the uniform integrability conditions of $Y^n$, $Y^m$ $U^n$, $U^m$ on the set $\{(\omega,t):\ dA_t(\omega)>0\}\cap\{(\omega,t):\ t< T_K\}$ and (\ref{ess03}), applying the dominated convergence theorem, we conclude that for every $p\ge 1$,
\begin{equation}
\label{cvg_deltafmn}
\lim_{N\to\infty}\sup_{m,n\ge N}\mathbb E\left[\left(\int_0^{T\wedge T_K}\left|\Delta_{n,m}f(t)\right|dA_t\right)^p\right]=0.
\end{equation}
At the same time, based on the a priori estimates of the sequence $\{(Y^n,\ U^n)\}$ and H\"older’s inequality, we establish that for every $p\ge 1$, there exists a constant $C_p>0$, independent of $N$, such that
$$
\sup_{m,n\ge N}\mathbb E\left[\left(\int_{T\wedge T_K}^T\left|\Delta_{n,m}f(t)\right|dA_t\right)^p\right]=\sup_{m,n\ge N}\mathbb E\left[\left(\int_{T_K}^T\left|\Delta_{n,m}f(t)\right|dA_t\right)^p1_{T_K<T}\right]\le C_p\sqrt{\mathbb P(T_K<T)}.
$$
Subsequently, for every $p\ge 1$, in conjunction with (\ref{cvg_deltafmn}), it is valid that for each $K>0$,
$$
\lim_{N\to\infty}\sup_{m,n\ge N}\mathbb E\left[\left(\int_0^{T}\left|\Delta_{n,m}f(t)\right|dA_t\right)^p\right]\le 2^{p-1}\left(C_p\sqrt{\mathbb P(T_K<T)}\right)^p,
$$
Thus let $K\to\infty$, we obtain
\begin{equation}
\label{cvg_deltafmnT}
\lim_{N\to\infty}\sup_{m,n\ge N}\mathbb E\left[\left(\int_0^{T}\left|\Delta_{n,m}f(t)\right|dA_t\right)^p\right]=0,\ \forall p\ge 1.
\end{equation}

Consequently, first letting $n,m \rightarrow \infty$ in (\ref{prob}) and then letting $\theta\rightarrow 1$ yield
 $$
 \lim _{N\rightarrow \infty}\sup_{m,n\ge N} \mathbb P\left(\sup _{t \in[0, T]}\left|Y_t^n-Y_t^m\right| \geq \delta\right)=0,
 $$
 which implies that the sequence $Y^n$ uniformly converges in probability (u.c.p.).
 Then, up to a subsequence, there exists a process $\tilde Y^0$ such that $\lim_{n\to\infty}\sup_{t\in[0,T]}|Y^n_t-\tilde Y^0_t|=0$. Then, $\tilde Y^0$ is ca\`dla\`g. With the help of Lemma \ref{submartingle property} and Fatou’s lemma, $\tilde Y^0\in\mathcal E.$

Fix $p \in[1, \infty)$. Since $\mathbb E\left[\exp \left\{2 p \lambda \cdot \sup _{t \in[0, T]}\left|Y_t^n-\tilde Y_t^0\right|\right\}\right] \leq \frac{1}{2} E\left[e^{4 p \lambda Y_*^n}+e^{4 p \lambda \tilde Y_*^0}\right] \leq {\Xi}(4 p\lambda, 3\alpha, 3 \beta)$ holds for any $n >C_0$, it turns out that $\left\{\exp \left\{p \lambda \cdot \sup _{t \in[0, T]}\left|Y_t^n-\tilde Y_t^0\right|\right\}\right\}_{n >C_0}$ is a uniformly integrable sequence in $\mathbb{L}^1\left(\mathcal{F}_T\right)$. Then, it follows that $\lim _{n \rightarrow \infty} \mathbb E\left[\exp \left\{p \lambda \cdot \sup _{t \in[0, T]}\left|Y_t^n-\tilde Y_t^0\right|\right\}\right]=1$, which in particular implies that
\begin{equation}
\label{cvg_Y}
\lim _{n \rightarrow \infty}\mathbb E\left[\sup _{t \in[0, T]}\left|Y_t^n-\tilde Y_t^0\right|^q\right]=0, \quad \forall q \in[1, \infty).
\end{equation}

\textbf{Step 5: Verification of the solution $(\tilde Y^0,U^0)$. }

We have constructed a "presolution" $(\tilde Y^0,\ U^0)\in \mathcal E\times H_{\nu}^{2,p}$. It is left to verify that it is a true solution.
Consider the sequence of stopping times $\left(T_K\right)_{K \geq 0}$ defined above as in (\ref{def-TKt}). For $t \leq T_K\wedge T$, we obtain exactly as in step 4 on $\{(t,\omega), dA_t(\omega)\not=0\}\cap\{t<T_K\}$,
$$
\lim _{n \rightarrow \infty} f^n\left(t, \omega, Y_t^n, U_t^n\right)=f\left(t, \omega, \tilde Y_t^0, U_t^0\right).
$$
Thus, by the dominated convergence theorem,
$$
\begin{aligned}
&\mathbb{E}\left[\int_0^{T_K\wedge T}\left|f^{n}\left(t, Y_t^{n},U_t^{n}\right)-f\left(t, \tilde Y^0_t, U^0_t\right)\right| d A_t\right]\\
&=\mathbb{E}\left[\int_0^{T_K\wedge T}\left|f^{n}\left(t, Y_t^{n},U_t^{n,K}\right)-f\left(t, \tilde Y^{0}_t, U^{0,K}_t\right)\right| d A_t\right],
\end{aligned}
$$
which becomes zero when $n$ goes to infinity.
Consequently, by first letting $n$ approach infinity and subsequently allowing $K$ to go to infinity, in the same manner as (\ref{cvg_deltafmnT}), we observe that
\begin{equation}
\label{cvg_f}
\lim_{n\to\infty }\mathbb{E}\left[\int_0^{T}\left|f^{n}\left(t, Y_t^{n}, U_t^{n}\right)-f\left(t, \tilde Y^0_t,U^0_t\right)\right| d A_t\right]=0.
\end{equation}

Lastly, we need to confirm that $(\tilde Y^0,U^0)$ is a solution. By combining (\ref{cvg_Y}), (\ref{cvg_f}), (\ref{estimate-U}), and the Burkholder–Davis–Gundy inequality, we obtain
\begin{equation}
\label{verify_solution}
\begin{aligned}
&\mathbb E\left[\left|\tilde Y^0_t-(\xi+\int_t^T f\left(s,\tilde Y^0_s, U^0_s\right) d A_s -\int_t^T \int_E U^0_s(e) q(d s, d e))\right|\right]\\
&\quad\le\lim_{n\to\infty} \mathbb E\left[\left|\tilde Y^0_t-Y^n_t\right|\right]+\lim_{n\to\infty}\mathbb E\left[\left|\left(\int_t^T f\left(s,\tilde Y^0_s, U^0_s\right) d A_s-\int_t^T \int_E U^0_s(e) q(d s, d e)\right)\right.\right.\\
&\quad\quad\left.\left.-\left(\int_t^T f^n\left(s,Y^n_s, U^n_s\right) d A_s -\int_t^T \int_E U^n_s(e) q(d s, d e)\right)\right|\right]\\
&\quad= 0.
\end{aligned}
\end{equation}

Then, we find a solution $(\tilde Y^0, U^0)\in \mathcal E\times H_{\nu}^{2,p}$ for the BSDE($\xi, f$) when $\xi$ is bounded. Moreover, considering assumption (H3), Lemma \ref{submartingle property}, and Proposition \ref{priori estimate on U and K}, the a priori estimates (\ref{eY bound}), (\ref{U bound}) and (\ref{e|u|q01}) hold for $(\tilde Y^0, U^0)$. Meanwhile, the estimate (\ref{e|u|q01}) guarantees that $U^0$ satisfies Definition \ref{def solution}.

The proof is complete.

\end{proof}

\subsection{Existence of exponential growth BSDEs with an unbounded terminal}

Now, we proceed to establish the existence of solutions for the BSDE(\ref{BSDE}) with an unbounded terminal. The main theorem is stated as follows.
\begin{thm}
\label{thm Quadratic unbounded}
Given that (H1)–(H4) are satisfied, the BSDE (\ref{BSDE}) possesses a unique solution $(Y, U)$ in the space $\mathcal E\times H_{\nu}^{2,p}$, for all $p\ge 1$.
\end{thm}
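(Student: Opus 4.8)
The plan is to deduce the unbounded case from Theorem~\ref{thm Quadratic bounded} by truncating the terminal condition and then replaying Steps~1--5 of the proof of Theorem~\ref{thm Quadratic bounded}, with the approximation now carried out on $\xi$ rather than on the generator; uniqueness is immediate from Corollary~\ref{uniqueness}, so only existence is at stake. For $n\ge1$ put $\xi_n:=(\xi\wedge n)\vee(-n)$, an $\mathcal F_T$-measurable bounded random variable with $|\xi_n|\le|\xi|$ and $\xi_n\to\xi$ $\mathbb P$-a.s. By Theorem~\ref{thm Quadratic bounded} each BSDE$(\xi_n,f)$ has a unique solution $(Y^n,U^n)\in\mathcal E\times H_\nu^{2,p}$ for every $p\ge1$. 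Since $|\xi_n|\le|\xi|$, assumption (H3)(d) makes the integrability condition~(\ref{integrability_condition}) hold for $\xi_n$ with $n$-independent constants, so Lemma~\ref{submartingle property} and Proposition~\ref{priori estimate on U and K} give the uniform bounds $\sup_n\mathbb E[e^{p\lambda Y^n_*}]\le\Xi(p\lambda(1+\beta\|A_T\|_\infty),\alpha,\beta)<\infty$, $\sup_n\|U^n\|_{H_\nu^{2,p}}<\infty$ and $\sup_n\mathbb E[(\int_0^T\int_E(e^{q\lambda|U^n_t(e)|}-1)^2\phi_t(de)\,dA_t)^p]<\infty$ for all $p,q\ge1$.

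Next I would establish u.c.p.\ convergence of $\{Y^n\}_n$. Fix $m,n\ge1$ and $\theta\in(0,1)$ and run the $\theta$-method following Step~3 of the proof of Theorem~\ref{thm Quadratic bounded} almost verbatim, applied to $Y^n-\theta Y^m$ and $U^n-\theta U^m$: since both equations now share the \emph{same} generator $f$, the term $\Delta_{n,m}f$ is absent and its role is taken over by the terminal discrepancy through $|\xi_n-\theta\xi_m|\le|\xi_n-\xi_m|+(1-\theta)|\xi|$, leading to a bound of the form
$$|Y^n_t-Y^m_t|\le(1-\theta)\big(|Y^n_t|+|Y^m_t|\big)+\frac{1-\theta}{\lambda}\ln\Big(\mathbb E_t\big[\Lambda^{n,m}\,e^{\frac{r}{1-\theta}|\xi_n-\xi_m|}\big]\Big),\qquad t\in[0,T],$$
with $r>0$ a fixed constant and $\Lambda^{n,m}$ a random variable bounded in every $\mathbb L^p$ uniformly in $n,m$ (of the same type as the products $D^m_T\eta$ and $D^m_T\Upsilon_{n,m}$ occurring in Step~3). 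Because $|\xi_n-\xi_m|\le2|\xi|$ and $|\xi_n-\xi_m|\to0$ $\mathbb P$-a.s., dominated convergence together with (H3)(d) gives $\mathbb E[e^{\frac{s}{1-\theta}|\xi_n-\xi_m|}]\to1$ as $n,m\to\infty$ for every fixed $s>0$ and $\theta\in(0,1)$. Inserting this into Doob's and H\"older's inequalities exactly as in~(\ref{prob}), sending $n,m\to\infty$ first and then $\theta\to1$, yields
$$\lim_{N\to\infty}\sup_{m,n\ge N}\mathbb P\Big(\sup_{t\in[0,T]}|Y^n_t-Y^m_t|\ge\delta\Big)=0,\qquad\forall\,\delta>0.$$
Along a subsequence this produces a c\`adl\`ag process $\tilde Y$ with $\sup_{t\in[0,T]}|Y^n_t-\tilde Y_t|\to0$ $\mathbb P$-a.s.; by Fatou and the first paragraph $\tilde Y\in\mathcal E$, and uniform integrability of $\{e^{p\lambda\sup_t|Y^n_t-\tilde Y_t|}\}_n$ upgrades this to $\mathbb E[\sup_{t\in[0,T]}|Y^n_t-\tilde Y_t|^q]\to0$ for every $q\ge1$.

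It remains to produce $U^0$ and to identify the limit. Applying It\^o's formula to $|Y^n-Y^m|^2$ as in Step~2 of the proof of Theorem~\ref{thm Quadratic bounded} and using the uniform estimates together with $\mathbb E[\int_0^T|Y^n_s-\tilde Y_s|^2\,dA_s]\to0$ shows that $\{U^n\}_n$ is Cauchy in $H_\nu^{2,2}$; its limit $U^0$ lies in $H_\nu^{2,p}$ for all $p\ge1$ and inherits~(\ref{e|u|q01}) through Proposition~\ref{priori estimate on U and K} and Fatou, so $(\tilde Y,U^0)$ meets Definition~\ref{def solution}. To pass to the limit in the equation I would reproduce Step~5: with $T_K:=\inf\{s>0:\mathbb E_s[\exp(e^{\beta A_T}\lambda|\xi|+\int_0^T\lambda e^{\beta A_s}\alpha_s\,dA_s)]>e^{\lambda K}\}$, \cite[Corollary~1]{Morlais_2009} allows replacing $U^n$ by a version bounded by $2K$ on $(0,T_K)$, so that $f(\cdot,Y^n,U^n)$ is dominated there; continuity (H3)(a) and the a.e.\ convergence of $(Y^n,U^n)$ give $L^1(dA\otimes d\mathbb P)$-convergence of $f(\cdot,Y^n,U^n)$ on $[0,T\wedge T_K]$, and sending $K\to\infty$ with $\mathbb P(T_K<T)\to0$ and the uniform $\mathbb L^p$-bounds on the generator yields $\mathbb E[\int_0^T|f(s,Y^n_s,U^n_s)-f(s,\tilde Y_s,U^0_s)|\,dA_s]\to0$. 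Together with $\xi_n\to\xi$ in $\mathbb L^1$, the $\mathbb L^q$-convergence of $Y^n$, the $H_\nu^{2,2}$-convergence of $U^n$ and the Burkholder--Davis--Gundy inequality, this lets one pass to the limit in BSDE$(\xi_n,f)$ and conclude that $(\tilde Y,U^0)\in\mathcal E\times H_\nu^{2,p}$ for all $p\ge1$ solves BSDE$(\xi,f)$; uniqueness is Corollary~\ref{uniqueness}.

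The main obstacle is the order of the two limits in the u.c.p.\ estimate. The $\Upsilon_{n,m}$-type factors inherited from the $\theta$-method blow up as $\theta\to1$, so $\theta\to1$ cannot be performed first; the argument closes only because, for each fixed $\theta$, the terminal-defect factor $\mathbb E[e^{\frac{s}{1-\theta}|\xi_n-\xi_m|}]$ tends to $1$ as $n,m\to\infty$ while the remaining $\theta$-dependent constants stay uniform in $n,m$. This is the exact analogue, transferred from a generator perturbation to a terminal perturbation, of the role that $\mathbb E[(\int_0^T|\Delta_{n,m}f|\,dA_s)^p]\to0$ plays in Step~4 of the proof of Theorem~\ref{thm Quadratic bounded}, and it is where the full strength of (H3)(d) --- arbitrary exponential moments of $|\xi|$ --- enters.
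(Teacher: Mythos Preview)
Your overall strategy---truncate, invoke the bounded-terminal result, then replay the $\theta$-method/u.c.p.\ argument---is exactly the paper's strategy, but there is a genuine gap at the very first step. Theorem~\ref{thm Quadratic bounded} does \emph{not} apply to BSDE$(\xi_n,f)$ under (H1)--(H4) alone: it additionally requires (H3')(c), namely $\mathbb E\big[\int_0^T|f(s,0,0)|^2\,dA_s\big]<\infty$, and this is \emph{not} implied by (H1)--(H4). Assumption (H3)(d) controls $\int_0^T\alpha_s\,dA_s$ (and hence $\int_0^T f(s,0,0)\,dA_s$) in every exponential moment, but it gives no information on $\int_0^T\alpha_s^2\,dA_s$; a simple deterministic counterexample is $A_s=s$ on $[0,1]$ with $\alpha_s=s^{-1/2}$. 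The paper states this explicitly just before the proof (``we drop assumption (H3')(c) in Theorem~\ref{thm Quadratic bounded}, which can not be induced by (H1)--(H4)''). Without (H3')(c) you cannot feed $f$ into the Lipschitz machinery underlying Theorem~\ref{thm Quadratic bounded}, so your sequence $(Y^n,U^n)$ is not known to exist.

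The paper's repair is to truncate the generator as well: set $\bar f^n(t,y,u)=f(t,y,u)-f(t,0,0)+\big(f(t,0,0)\wedge n\big)\vee(-n)$, so that $\bar f^n(t,0,0)$ is bounded by $n$ and (H3')(c) holds trivially for $\bar f^n$, while convexity in $u$, the Lipschitz property in $y$, and (H4) are preserved, and the growth condition becomes $-3\alpha_t-\beta|y|-\frac1\lambda j_\lambda(t,-u)\le \bar f^n\le 3\alpha_t+\beta|y|+\frac1\lambda j_\lambda(t,u)$. One then solves BSDE$(\xi^n,\bar f^n)$ by Theorem~\ref{thm Quadratic bounded}. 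From that point on your sketch is essentially the paper's proof: the $\theta$-method estimate now carries, besides the terminal defect $\widetilde\eta^{n,m}=\exp\{\zeta_\theta^0 e^{\beta\|A_T\|_\infty}(|\xi^m-\theta\xi^n|\vee|\xi^n-\theta\xi^m|)\}$, an additional generator defect $\widetilde\Delta_{n,m}\bar f(t)=|\bar f^n(t,0,0)-\bar f^m(t,0,0)|\le 2\alpha_t\wedge(|f(t,0,0)|\mathbf 1_{|f(t,0,0)|>n\wedge m})$, which tends to $0$ and is dominated, so both vanish in the limit exactly as you described. Your handling of the order of limits ($n,m\to\infty$ first, then $\theta\to1$) and of the verification step via the localizing times $T_K$ is correct and matches the paper; the construction of $U^0$ is actually slightly simpler here than in the bounded case because one already has the strong convergence $\mathbb E[\sup_t|Y^n_t-Y^0_t|^q]\to0$ before turning to $U$.
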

The uniqueness aspect is also derived from Corollary \ref{uniqueness}, leaving us with the task of proving its existence. We achieve this by approximating the solution using the solutions of BSDEs with bounded terminals.  It is worth motioning that we drop assumption (H3')(c) in Theorem \ref{thm Quadratic bounded}, which can not be induced by (H1)-(H4).

\begin{proof}[Proof of the existence of BSDEs with unbounded terminals: the existence part of Theorem \ref{thm Quadratic unbounded}]
With a slight abuse of notation, following Theorem \ref{thm Quadratic bounded}, let us denote the unique solution to BSDE($\xi^n,\ \bar f^n$) by ($Y^n,\, U^n$), where $\xi^n=(\xi\wedge n)\vee -n$, $\bar f^n(t,\cdot,\cdot)=f(t,\cdot,\cdot)-f(t,0,0)+f^n(t,0,0)$, $f^n(t,0,0)=(f(t,0,0)\wedge n)\vee -n$. Hence, $(Y^n,U^n)\in \mathcal E\times H_{\nu}^{2,p}$ for every $p\ge 1$.  Meanwhile, note that for each $t\in[0,T]$ and $(y,u) \in \mathbb{R} \times L^2(E,\mathcal{B}(E),\phi_t(\omega,dy)):\ \mathbb{P}$-a.s.
\begin{equation}
\label{growth3}
-3\alpha_t-\beta|y|-\frac{1}{\lambda}j_{\lambda}(t,-u)\le f(t,y,u)-2\alpha_t\le \bar f^n(t,y,u)\le f(t,y,u)+2\alpha_t\le 3\alpha_t+\beta|y|+\frac{1}{\lambda}j_{\lambda}(t,u).
\end{equation}
Thus, according to Lemma \ref{submartingle property}, for each $n\in \mathbb N$
\begin{equation}
\label{sub mart |yn|2}
\exp \left\{p \lambda\left|Y^{n}_t\right|\right\} \leq \mathbb{E}_t\left[\exp \left\{p \lambda e^{\beta A_T}|\xi|+p\lambda  \int_t^T e^{\beta A_s}3\alpha_s d A_s\right\}\right].
\end{equation}
We use the sequence $(Y^n,U^n)$ to approximate the solution for BSDE($ \xi,\ f$). As in the proof of the bounded terminal case, we divide the proof into three steps. In the initial two steps, we establish a candidate solution $(Y^0,\ U^0)$, reserving the verification of the solution for the final step.

\textbf{Step 1: Construction of "presolution" $Y^0$.}
Using reasoning similar to that in the case of BSDEs with bounded terminals, we first derive an a priori estimate for $|Y^n-Y^m|$.
For $m,\ n\in\mathbb N,\ \theta \in(0,1)$, we will show that $\mathbb P$-a.s.
$$
\left|Y_t^n-Y_t^m\right| \leq(1-\theta)\left(\left|Y_t^m\right|+\left|Y_t^n\right|\right)+\frac{1-\theta}{\lambda} \ln \left(\sum_{i=1}^2\widetilde J_t
^{n,m,i}\right), \quad t \in[0, T],
$$
where $\zeta_\theta^0 \triangleq \frac{\lambda e^{\beta \|A_T\|_\infty}}{1-\theta}$, and $\widetilde J_t^{n,m,i} := \mathbb E\left[\widetilde J_T^{n,m,i} \mid \mathcal{F}_t\right]$, $i=1,2$, such that
$$
\begin{aligned}
&{ \widetilde {J}_T^{n,m,1} := (\widetilde D^m_T+\widetilde D^n_T)\widetilde \eta^{n,m} \text { with }
\widetilde\eta^{n,m} := \exp \left\{\zeta_\theta^0 e^{\beta\|A_T\|}\left(|\xi^m-\theta\xi^n|\vee|\xi^n-\theta\xi^m|\right)\right\},}\\
&\widetilde D^m_t := \exp \left\{\lambda e^{2 \beta \|A_T\|} \int_0^t\left(\alpha_s+2\beta\left|Y_s^m\right|\right) d A_s\right\},\\
&\widetilde D^n_t := \exp \left\{\lambda e^{2 \beta \|A_T\|} \int_0^t\left(\alpha_s+2\beta\left|Y_s^n\right|\right) d A_s\right\},t \in[0, T].\\
& \widetilde J_T^{n,m,2}:= \zeta_\theta^0 e^{\beta \|A_T\|} (\tilde D^m_T+\tilde D^n_T) \tilde \Upsilon_{n,m} \int_0^T\left|\tilde\Delta_{n,m}\bar f(s)\right| d A_s,\\
&\tilde\Upsilon_{n,m} := \exp \left\{\zeta_\theta^0 e^{\beta\|A_T\|}\left(Y_*^n+Y_*^m\right)\right\},\\
&\tilde\Delta_{n,m} \bar f(t) := \left|\bar f^n\left(t, 0,0\right)-\bar f^m\left(t, 0, 0\right)\right|, \ t \in[0, T].
\end{aligned}
$$
With the aforementioned notations, following a similar justification as in (\ref{gamma01}), we derive
\begin{equation}
\label{Gamma02}
\tilde\Gamma_t^{n,m}:= \exp \left\{\zeta_\theta^0 e^{\tilde A^{(n,m)}_s} \tilde Y_s^{(n,m)}\right\} \leq \mathbb E_t[\widetilde D_T^m\widetilde \eta^{n,m}]+\widetilde J_t^{n,m,2}\le \mathbb E_t[\widetilde J^{n,m,1}_T]+\widetilde J_t^{n,m,2}=\widetilde J_t^{n, m,1}+\widetilde J_t^{n, m,2}.
\end{equation}
Indeed, by setting $\{f^n\}=\{\bar f^n\}$  in the proof of (\ref{gamma01}) and considering the fact that $f$ satisfies the growth condition (\ref{growth3}), the proof of (\ref{gamma01}) unfolds step by step when we substitute the data $(C_0,\ 3\alpha,\ 3\beta,\ \eta)$ with $(1,\ 3\alpha,\ \beta,\ \widetilde\eta^{n,m})$.

Then, similar to (\ref{ess05}),
\begin{equation}
\label{ess0205}
|Y_t^m-Y_t^n| \leq(1-\theta)(\left|Y_t^m\right|+\left|Y_t^n\right|)+\frac{1-\theta}{\lambda} \ln \left(\sum_{i=1}^2\widetilde J_t^{m,n,i}\right), \quad \mathbb P \text {-a.s. }
\end{equation}

From (\ref{ess0205}), we utilize Doob’s martingale inequality and H\"older’s inequality to derive, for any $\delta>0$,
\begin{equation}
\label{prob2}
\begin{aligned}
& \mathbb P\left(\sup _{t \in[0, T]}\left|Y_t^n-Y_t^m\right| \geq \delta\right) \leq \mathbb P\left((1-\theta)\left(Y_*^m+Y_*^n\right) \geq \delta / 2\right)+\mathbb P\left(\frac{1-\theta}{\lambda} \ln \left(\sum_{i=1}^2\widetilde J_*^{n,m,i}\right) \geq \delta / 2\right) \\
& \leq 2 \frac{1-\theta}{\delta}\mathbb E\left[Y_*^m+Y_*^n\right]+\sum_{i=1}^2\mathbb P\left(\widetilde J_*^{n,m,i} \geq \frac{1}{2} e^{\frac{\delta \lambda}{2(1-\theta)}}\right) \\
&\leq \frac{1-\theta}{\delta \lambda} \mathbb E\left[e^{2 \lambda Y_*^m}+e^{2 \lambda Y_*^n}\right]+2 e^{\frac{-\delta \lambda}{2(1-\theta)}}\sum_{i=1}^2 \mathbb E\left[\widetilde J_*^{n,m,i}\right] \\
& \leq 2 \frac{1-\theta}{\delta \lambda} {\Xi}(2\lambda,\ 3\alpha, \ \beta)+4 e^{\frac{-\delta \lambda}{2(1-\theta)}}\left(\mathbb E\left[\left(\widetilde D^m_T+\widetilde D^n_T\right)^2(\widetilde\eta^{n,m})^2 \right]\right)^{1/2}\\
\
&\quad\quad+4e^{\frac{-\delta \lambda}{2(1-\theta)}}\left(\mathbb E\left[\left(\zeta_\theta^0 e^{\beta \|A_T\|} (\tilde D^m_T+\tilde D^n_T) \tilde \Upsilon_{n,m} \int_0^T\left|\tilde\Delta_{n,m}\bar f(s)\right| d A_s\right)^2\right]\right)^{1/2}\\
&\le 2 \frac{1-\theta}{\delta \lambda} {\Xi}(2\lambda,\ 3\alpha, \ \beta)+4 e^{\frac{-\delta \lambda}{2(1-\theta)}}\left(\mathbb E\left[\left(\widetilde D^m_T+\widetilde D^n_T\right)^4\right]\right)^{1/4}\left(\mathbb E[(\widetilde\eta^{n,m})^4] \right)^{1/4}\\
&\quad\quad+4e^{\frac{-\delta \lambda}{2(1-\theta)}}\left(\mathbb E\left[\left(\zeta_\theta^0 e^{\beta \|A_T\|} (\tilde D^m_T+\tilde D^n_T) \tilde \Upsilon_{n,m}\right)^4\right]\right)^{1/4}\left(\mathbb E\left[\left( \int_0^T\left|\tilde\Delta_{n,m}\bar f(s)\right| d A_s\right)^4\right]\right)^{1/4}\\
&\le 2 \frac{1-\theta}{\delta \lambda} {\Xi}(2\lambda,\ 3\alpha, \ \beta)+4 e^{\frac{-\delta \lambda}{2(1-\theta)}}\left(\sup_{m,n}\left(\mathbb E\left[\left(\widetilde D^m_T+\widetilde D^n_T\right)^4\right]\right)^{1/4}\right)\left(\mathbb E[(\widetilde\eta^{n,m})^4] \right)^{1/4}\\
&\quad\quad+4e^{\frac{-\delta \lambda}{2(1-\theta)}}\left(\sup_{m,n}\left(\mathbb E\left[\left(\zeta_\theta^0 e^{\beta \|A_T\|} (\tilde D^m_T+\tilde D^n_T) \tilde \Upsilon_{n,m}\right)^4\right]\right)^{1/4}\right)\left(\mathbb E\left[\left( \int_0^T\left|\tilde\Delta_{n,m}\bar f(s)\right| d A_s\right)^4\right]\right)^{1/4}.
\end{aligned}
\end{equation}
Note that for any $t\in[0,T]$,
$$
\lim_{n,m\to\infty}\left|\tilde\Delta_{n,m}\bar f(t)\right|=0,\quad  \left|\tilde\Delta_{n,m}\bar f(t)\right|\le 6\alpha_t.
$$
Therefore, considering $\lim_{n,m \rightarrow \infty}\widetilde\eta^{n,m}=\exp\{\lambda e^{2\beta\|A_T\|_\infty}|\xi|\}$ and the dominated convergence theorem, we first let $n,m \rightarrow \infty$ in (\ref{prob2}) and then let $\theta\rightarrow 1$, resulting in
 $\lim _{N\rightarrow \infty}\sup_{m,n\ge N} \mathbb P\left(\sup _{t \in[0, T]}\left|Y_t^n-Y_t^m\right| \geq \delta\right)=0$, which implies that the sequence $Y^n$ uniformly converges in probability (u.c.p.).
Then, there exists a process $Y^0$, up to a subsequence, such that
\begin{equation}
\label{ycvgunif}
\lim_{n\to\infty}\sup_{t\in[0,T]}|Y^n_t- Y^0_t|=0.
\end{equation}
Then, $ Y^0$ is c\` adl\`ag. Making use of Lemma \ref{submartingle property} and Fatou’s lemma, $ Y^0\in\mathcal E$. Furthermore, analogous to (\ref{cvg_Y}), we deduce that
\begin{equation}
\label{cvg_Y2}
\lim _{n \rightarrow \infty}\mathbb E\left[\sup _{t \in[0, T]}\left|Y_t^n-Y_t^0\right|^q\right]=0, \quad \forall q \in[1, \infty).
\end{equation}

\textbf{Step 2: Construction of "presolution" $U^0$.}
Compared to Step 2 of the proof in the bounded terminal case, thanks to (\ref{cvg_Y2}), the current proof is more straightforward.

For any $m,\ n \in \mathbb{N}$, applying It\^o’s formula to the process $\left|Y^n-Y^m\right|^2$, we can infer that
$$
\begin{aligned}
& \int_0^T \int_E\left|U_s^n(e)-U_s^m(e)\right|^2 \phi_s(d e) d A_s \\
& =\left|\xi^n-\xi^m\right|^2-\left|Y_0^n-Y_0^m\right|^2+2 \int_0^T\left(Y_{s^{-}}^n-Y_{s^{-}}^m\right)\left(f\left(s, Y_s^n, U_s^n\right)-f\left(s, Y_s^m, U_s^m\right)\right) d A_s \\
& \quad-2 \int_0^T \int_E\left(\left(Y_{s^{-}}^n-Y_{s^{-}}^m\right)\left(U_s^n-U_s^m\right)+\left|U_s^n-U_s^m\right|^2\right) q(d s d e) \\
& \leq 2 \sup _{t \in[0, T]}\left|Y_t^n-Y_t^m\right|\left(2 \int_0^T \alpha_s d A_s+\beta\left\|A_T\right\|\left(Y_*^n+Y_*^m\right)+\frac{1}{\lambda} \int_0^T\left(j_\lambda\left(U_s^m\right)+j_\lambda\left(U_s^n\right)+j_\lambda\left(-U_s^m\right)+j_\lambda\left(-U_s^n\right)\right) d A_s\right) \\
& \quad+\sup _{t \in[0, T]}\left|Y_t^n-Y_t^m\right|^2-2 \int_0^T \int_E\left(\left(Y_{s^{-}}^n-Y_{s^{-}}^m\right)\left(U_s^n-U_s^m\right)+\left|U_s^n-U_s^m\right|^2\right) q(d s d e).
\end{aligned}
$$
The last term is a martingale owing to the integrability condition of $Y^n, Y^m$ and $U^n, U^m$. Then, similar to (\ref{U-cauchy}), by taking expectations and using H\"older’s inequality, we deduce, from (\ref{sub mart |yn|2}) and (\ref{e|u|}) in the proof of Proposition \ref{priori estimate on U and K}, the existence of a constant $c>0$, which varies, such that
$$
\begin{aligned}
\mathbb{E}[ & \left.\left(\int_0^T \int_E\left|U_s^n-U_s^m\right|^2 \phi_s(e) d A_s\right)\right] \\
\leq & \mathbb{E}\left[\sup _{t \in[0, T]}\left|Y_t^n-Y_t^m\right|^2\right]+c\left\{\mathbb{E}\left[\sup _{t \in[0, T]}\left|Y_t^n-Y_t^m\right|^2\right]\right\}^{\frac{1}{2}} \\
& \times\left\{\sup _{m \in \mathbb{N}} \mathbb{E}\left[e^{2 \lambda Y_*^m}+\int_0^T \int_E\left(e^{\lambda U_s^m}-1\right)^2 \phi_s(e) d A_s+\int_0^T \int_E\left(e^{-\lambda U_s^m}-1\right)^2 \phi_s(e) d A_s\right]\right\}^{\frac{1}{2}} \\
\leq & \mathbb{E}\left[\sup _{t \in[0, T]}\left|Y_t^n-Y_t^m\right|^2\right]+c\left\{\mathbb{E}\left[\sup _{t \in[0, T]}\left|Y_t^n-Y_t^m\right|^2\right]\right\}^{\frac{1}{2}}.
\end{aligned}
$$
Hence, it follows that
$$
\lim _{N \rightarrow \infty} \sup _{m, n \geq N}\mathbb E\left[\left(\int_0^T \int_E\left|U_s^n(e)-U_s^m(e)\right|^2 \phi_s(d e) d A_s\right)\right]=0.
$$
Thus, $\left\{U^n\right\}$ is a Cauchy sequence in $H_{\nu}^{2,2}$, which implies that there exists a $U^0 \in H_{\nu}^{2,2}$ such that
\begin{equation}
\label{cvg U2}
\lim _{n \rightarrow \infty} \mathbb E\left[\left(\int_0^T \int_E\left|U_s^n(e)-U_s^0(e)\right|^2 \phi_s(d e) d A_s\right)\right]=0.
\end{equation}
Thus, up to a subsequence,
$$
\lim _{n \rightarrow \infty}\left(\int_0^T \int_E\left|U_s^n(e)-U_s^0(e)\right|^2 \phi_s(d e) d A_s\right)=0,\ \mathbb{P}\text{-a.s}.
$$
Then, on the set $\left\{(t, \omega) \in[0, T] \times \Omega: d A_t(\omega) \neq 0\right\}$, it holds that
\begin{equation}
\label{ucvgphi}
\lim _{n \rightarrow \infty}\left(\int_E\left|U_t^n(e)-U_t^0(e)\right|^2 \phi_t(d e)\right)=0.
\end{equation}
Moreover, with the help of Proposition \ref{priori estimate on U and K} and Fatou’s lemma, similar to (\ref{u0}) and (\ref{U0e}), $U^0 \in H_{\nu}^{2, p}$, for each $p\ge 1$. Moreover, $U^0$ satisfies the martingale property in Definition \ref{def solution}.

\textbf{Step 3: Verification of the solution $(Y^0,U^0)$. }

We have constructed a "presolution" $(Y^0,\ U^0)\in \mathcal E\times \cap_{p\ge 1}H_{\nu}^{2,p}$. It remains to be verified whether it is a true solution.
Since $f$ is continuous with respect to $y$ and $u$, referring to (\ref{ycvgunif}), (\ref{ucvgphi}) and the definition of $\{\bar f^n\}$, for $t \in [0,T]$, on $\{(t,\omega), dA_t(\omega)\not=0\}$,
$$
\lim _{n \rightarrow \infty} \bar f^n\left(t, \omega, Y_t^n, U_t^n\right)=f\left(t, \omega,  Y_t^0, U_t^0\right).
$$
Thus, by the dominated convergence theorem,
\begin{equation}
\label{cvgf2}
\lim_{n\to\infty}\mathbb{E}\left[\int_0^{T}\left|\bar f^n\left(t, Y_t^{n},U_t^{n}\right)-f\left(t, Y^0_t, U^0_t\right)\right| d A_t\right]=0.
\end{equation}
Then, with the help of (\ref{cvg_Y2}), (\ref{cvgf2}), (\ref{cvg U2}), and the Burkholder–Davis–Gundy inequality, we obtain
\begin{equation}
\begin{aligned}
&\mathbb E\left[\left|Y^0_t-(\xi+\int_t^T f\left(s,Y^0_s, U^0_s\right) d A_s -\int_t^T \int_E U^0_s(e) q(d s, d e))\right|\right]\\
&\quad\le\lim_{n\to\infty} \left(\mathbb E\left[\left|Y^0_t-Y^n_t\right|^2\right]\right)^{1/2}+\lim_{n\to\infty}\mathbb E\left[\left|\left(\int_t^T f\left(s, Y^0_s, U^0_s\right) d A_s-\int_t^T \int_E U^0_s(e) q(d s, d e)\right)\right.\right.\\
&\quad\quad\left.\left.-\left(\int_t^T \bar f^n\left(s,Y^n_s, U^n_s\right) d A_s -\int_t^T \int_E U^n_s(e) q(d s, d e)\right)\right|\right]\\
&\quad= 0.
\end{aligned}
\end{equation}
The proof is complete.

\end{proof}

\begin{remark}
Under unbounded terminal conditions, the existence of exponential growth BSDEs holds even without the assumption of convexity, as noted in \cite{karoui2016quadratic}. The authors use a different method to approximate the solution from solutions of a set of Lipschitz BSDEs. However, \cite{karoui2016quadratic} does not provide a uniqueness result. Recently, a uniqueness result for a specific type of quadratic–exponential BSDEs, arising from a robust utility maximization problem under a jump setting, was established in Kaaka\"i, Matoussi and Tamtalini\cite{kaakai2022utility}. Compared to their work, in this study, by imposing the additional assumption of convexity/concavity on the driver $f$, we remove the absolutely continuous assumption on the compensator and obtain a uniqueness result for a broader class of BSDEs using the $\theta-$method.
\end{remark}

\begin{remark}
\label{remark general}
Consider the following BSDE
\begin{equation}
\label{BSDE with B}
  Y_t=\xi+\int_t^T f\left(s,Y_s, U_s\right) d A_s +\int_t^T g\left(s,Y_s, Z_s\right) d s-\int_t^T Z_s d W_s-\int_t^T \int_E U_s(e) q(d sd e),
\end{equation}
where $W\in \mathbb R^d$ denotes a $d$-dimensional standard Brownian motion independent of the MPP. Assume that (H1)–(H4) hold, and $g(t,y,z)$ satisfies the additional condition:\\

\noindent(i) for all $t \in[0, T]$, for all $y \in \mathbb{R}, z \longmapsto g(t, y, z)$ is convex or concave;\\

\noindent(ii) for all $(t, z) \in[0, T] \times \mathbb{R}^d$,
$$
\forall\left(y, y^{\prime}\right) \in \mathbb{R}^2, \quad\left|g(t, y, z)-g\left(t, y^{\prime}, z\right)\right| \leq \beta\left|y-y^{\prime}\right|;
$$

\noindent(iii) $g$ fulfills the following growth condition:
$$
\forall(t, y, z) \in[0, T] \times \mathbb{R} \times \mathbb{R}^d, \quad|g(t, y, z)| \leq \alpha_t+\beta|y|+\frac{\gamma}{2}|z|^2.
$$
For every $p\ge 1$, there exists a unique solution $(Y,Z,U)\in \mathcal E\times \mathbb H^p\times H_{\nu}^{2,p}$ to (\ref{BSDE with B}).
By combining the comparison theorem from Briand and Hu \cite[Theorem 5]{briand2008quadratic} with Theorem \ref{Comparison Theorem for BSDE}, we can also derive a comparison theorem for BSDE (\ref{BSDE with B}), which inherently implies uniqueness. The existence result is inherited from Theorem \ref{thm Quadratic unbounded} and Briand and Hu \cite[Theorem 2]{briand2006bsde}.
\end{remark}

\begin{remark}
\label{remark_RBSDEJ}
Our techniques can be readily adapted to BSDEs of the following form, featuring a general generator $f(t,y,z,u)$ satisfying the quadratic–exponential growth condition, where $C_t$ is a predictable, continuous non-decreasing process starting from zero.
\begin{equation}
\label{BSDEJ with B}
Y_t=\xi+\int_t^T f\left(s,Y_s, Z_s, U_s\right) d C_s -\int_t^T Z_s d B_s-\int_t^T \int_E U_s(e) q(d s d e), \quad 0 \leq t \leq T, \quad \mathbb{P} \text {-a.s. }
\end{equation}
This form is more general than the BSDEJ settings described in \cite{kazi2015quadratic}.
Motivated by Theorem \ref{thm Quadratic unbounded}, we assume that the structural and growth conditions on $f$ are as follows, where the differential form is understood as integration over any measurable subset of $[0,T]$.
\hspace*{\fill}\\

\noindent(\textbf{H1}) The process A is continuous, with $\|A_T\|_\infty<\infty$.
\hspace*{\fill}\\

\noindent(\textbf{H2*}) For every $\omega \in \Omega,\ t \in[0, T],\ r\in \mathbb{R},\ z\in\mathbb R^d$, the mapping
$
f(\omega, t, r, z,\cdot):L^2(E,\mathcal{B}(E),\phi_t(\omega,dy))\rightarrow \mathbb{R}
$
satisfies:
for every $U \in {H_\nu^{2,2}}$,
$$
(\omega, t, r,z) \mapsto f\left(\omega, t, r,z, U_t(\omega, \cdot)\right)
$$
is Prog $\otimes \mathscr{B}(\mathbb{R})\otimes \mathscr{B}(\mathbb{R}^d)$-measurable.

\hspace*{\fill}\\
\hspace*{\fill}\\
\noindent(\textbf{H3*})

\textbf{(a) (Continuity condition)}
For every $\omega \in \Omega, t \in[0, T], y \in \mathbb{R}$, $z\in\mathbb R^d$ $u\in L^2(E,\mathcal{B}(E),\phi_t(\omega,dy))$, $(y,z, u) \longrightarrow f(t, y,z, u)$ is continuous.
\hspace*{\fill}\\
\hspace*{\fill}\\

\textbf{(b) (Lipschitz condition in $y$)}
There exists $\beta\geq 0$, such that for every $\omega \in \Omega,\ t \in[0, T],\ y, y^{\prime} \in \mathbb{R}$, $z\in\mathbb R^d$, \ $u\in L^2(E,\mathcal{B}(E),\phi_t(\omega,dy))$, we have
$$
\begin{aligned}
& \left|f(\omega, t, y,z, u(\cdot))-f\left(\omega, t, y^{\prime},z, u(\cdot)\right)\right| dC_t\leq \beta\left|y-y^{\prime}\right|(dA_t+dt).
\end{aligned}
$$

\hspace*{\fill}\\

\textbf{(c) (Quadratic–exponential growth condition)}
For all $t \in[0, T], \ (y,z, u) \in \mathbb{R} \times\mathbb R^d\times L^2(E,\mathcal{B}(E),\phi_t(\omega,dy)):\ \mathbb{P}$-a.s, there exists $\lambda>0$ such that
\begin{equation}
\label{fc condition}
\begin{aligned}
-\left(\alpha_t+\beta|y|+\frac{\gamma}{2}|z|^2 \right)dt+\left(-\alpha_t-\beta|y|-\frac{1}{\lambda} j_{\lambda}(t, -u)\right)dA_t\\
\le f(t, y, z,u)dC_t
\leq \left(\alpha_t+\beta|y|+\frac{1}{\lambda} j_{\lambda}(t, u)\right)dA_t+\left(\alpha_t+\beta|y|+\frac{\gamma}{2}|z|^2 \right)dt,
\end{aligned}
\end{equation}
where $\{\alpha_t\}_{0 \leq t \leq T}$ is a progressively measurable non-negative stochastic process.

\hspace*{\fill}\\
\hspace*{\fill}\\

\textbf{(d) (Integrability condition)}
We assume that
$$
\forall p>0, \quad \mathbb{E}\left[\exp \left(p\left(\left|\xi\right|+\int_0^T \alpha_s (d A_s+ds)\right)\right)\right]<+\infty.
$$
\hspace*{\fill}\\

\textbf{(e) (Convexity/Concavity condition)}
 For all $t \in[0, T]$ and $y \in \mathbb{R}$, $(z,u)\in\mathbb R^d\times L^2(E,\mathcal{B}(E),\phi_t(\omega,dy))\longmapsto f(t, y, z,u)$ is
jointly convex or concave.
\hspace*{\fill}\\

\noindent\textbf{(H4*) (Uniform linear bound condition)}
There exists a positive constant $C_0$ such that for each $t \in[0, T]$, $u\in L^2(E,\mathcal{B}(E),\phi_t(\omega,dy))$, if
 $f$ is convex (resp. concave) in $u$, then $f(t,0,,0,u)-f(t,0,0,0)\ge -C_0\|u\|_t$ (resp. $f(t,0,0,u)-f(t,0,0,0)\le C_0\|u\|_t$).
\hspace*{\fill}\\
\hspace*{\fill}\\
Under the abovementioned assumptions, the well-posedness of (\ref{BSDEJ with B}) holds, following from a parallel proof of Theorem \ref{thm Quadratic unbounded}, with assistance from Briand and Hu \cite[Theorem 5]{briand2008quadratic} in conjunction with \cite[Theorem 2]{briand2006bsde}. Specifically, for each $p\ge 1$, there exists a unique solution $(Y,Z,U)\in \mathcal E\times \mathbb H^p\times H_{\nu}^{2,p}$ to (\ref{BSDEJ with B}).

In comparison with \cite{kazi2015quadratic}, we do not assume a bounded terminal condition, nor do we impose the Fr\'echet differentiability condition or the local Lipschitz condition on the components $(z,u)$ of the generator $f$. Moreover, the linear bound condition (H4*) is weaker than the $A_{\gamma}$ condition utilized therein. Additionally, the growth condition (H3*)(c) is weaker, and it is unnecessary to subtract $f(t,0,0,0)$. Thus, our results apply under more general circumstances.
\end{remark}

\section{Mean-reflected BSDE}
\label{section mean reflected}

We further consider the BSDE (\ref{eq standard}) with the mean-reflected condition given by
\begin{equation}
\label{eq standard}
\left\{\begin{array}{l}
Y_t=\xi+\int_t^T f\left(s, Y_s, U_s\right) d A_s-\int_t^T \int_E U_s(e)q(dsde)+K_T-K_t, \quad 0 \leq t \leq T, \\
\mathbb{E}\left[\ell\left(t, Y_t\right)\right] \geq 0, \quad \forall t \in[0, T] \text { and } \int_0^T \mathbb{E}\left[\ell\left(t, Y_{t^-}\right)\right] d K_t=0.
\end{array}\right.
\end{equation}
Additional assumptions on the running loss function $\ell$ are also required.\\
\textbf{(H5)} $\ell:\Omega \times [0, T] \times \mathbb{R} \rightarrow \mathbb{R}$ satisfies the following properties:

1. $(t, y) \rightarrow \ell(t, y)$ is continuous.

2. $\forall t \in[0, T], y \rightarrow \ell(t, y)$ is strictly increasing,

3. $\forall t \in[0, T], \mathbb{E}\left[\lim_{y\uparrow \infty} \ell(t, y)\right]>0$,

4. $\forall t \in[0, T], \forall y \in \mathbb{R},|\ell(t, y)| \leq C(1+|y|)$ for some constant $C \geq 0$.\\
\textbf{(H6)} There exist two constants $\overline\kappa>\underline\kappa>0$ such that for each $t\in[0, T]$ and $y_1, y_2 \in \mathbb{R}$,
$$
\underline{\kappa}\left|y_1-y_2\right| \leq\left|\ell\left(t, y_1\right)-\ell\left(t, y_2\right)\right| \leq \overline\kappa\left|y_1-y_2\right|.
$$

To investigate mean-reflected BSDEs, we define the following map $L_t: \mathcal E\rightarrow \mathbb{R}$ for each $t \in[0, T]:$
$$
L_t(\eta)=\inf \{x \geq 0: \mathbb{E}[\ell(t, x+\eta)] \geq 0\}, \quad \forall \eta \in \mathcal E.
$$
When assumption (H5) is satisfied, the operator $X \mapsto L_t(X)$ is well-defined, similar to \cite{briand2018bsdes}.
\begin{remark}
\label{remark1}
Moreover, if assumption (H6) is also fulfilled, then for each $t \in[0, T]$, $\kappa:=\overline\kappa/\underline\kappa>1$,
\begin{equation}
\label{eqL}
\left|L_t\left(\eta^1\right)-L_t\left(\eta^2\right)\right| \leq \kappa \mathbb{E}\left[\left|\eta^1-\eta^2\right|\right], \quad \forall \eta^1, \eta^2 \in \mathcal E.
\end{equation}
\end{remark}

Moreover, in order to first construct local solutions on small time intervals and then stitch them to obtain the global solution on $[0,T]$, we need the following further assumption on the process $A$ in BSDE(\ref{BSDE}).
\hspace*{\fill}\\

\noindent(\textbf{H1'}) The process $A$ is continuous with $\|A_s-A_t\|_\infty<|\rho(|s-t|)$, for any $s,t\in[0,T]$, where $\rho(\cdot)$ is a deterministic continuous increasing function with $\rho(0)=0$ and $\rho(T)<\infty$.
\hspace*{\fill}\\
\hspace*{\fill}\\
It is obvious that (H1') implies (H1), i.e.,  $\|A_T\|_\infty<\infty$.

The main result of this section is as follows.
\begin{thm}
\label{thm1}
  If assumptions (H1') and (H2)–(H6) are satisfied, then the BSDE (\ref{eq standard}) with mean reflection possesses a unique deterministic flat solution $(Y, U, K)$ in the space $ \mathcal E\times H_{\nu}^{2,p} \times \mathcal{A}_D$, for every $p\ge 1$.
\end{thm}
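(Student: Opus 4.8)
The plan is to reduce the mean‑reflected problem, for a given deterministic $K$, to a classical exponential‑growth BSDE to which Theorem \ref{thm Quadratic unbounded} applies, to encode the constraint and the flatness condition through the operator $L_t$, to solve the resulting fixed‑point problem on a short interval by Banach's theorem, and finally to patch the local solutions into a global one. Recall from Remark \ref{remark1} that under (H5)–(H6) the map $L_t$ is well defined with $0\le L_t(\eta)$, $\sup_tL_t(0)<\infty$ (from the linear growth in (H5) and the lower bi‑Lipschitz bound in (H6)), and $|L_t(\eta^1)-L_t(\eta^2)|\le\kappa\,\mathbb E|\eta^1-\eta^2|$ with $\kappa=\overline\kappa/\underline\kappa>1$. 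The key observation is that, on an interval $[t_0,t_1]$, a triple $(Y,U,K)$ is a deterministic flat solution with $Y_{t_1}=\chi$ if and only if, setting $m_t:=K_{t_1}-K_t$ (deterministic, nonincreasing, $m_{t_1}=0$), the pair $(\mathcal Y,\mathcal U):=(Y-m,U)$ is the unique solution of the classical BSDE with terminal $\chi$ and generator $f^m(s,y,u):=f(s,y+m_s,u)$ — which inherits (H1)–(H4), since $m$ is bounded and deterministic (possibly after enlarging $\alpha$, $C_0$, and the coefficient of $|y|$ in (H3)(c), which Lemma \ref{lemma monotonicity} and Theorem \ref{thm Quadratic unbounded} tolerate) — together with the fixed‑point relation $m=\Psi(m)$, where $\Psi(m)_t:=\sup_{t\le s\le t_1}L_s(\mathcal Y^m_s)$ and $\mathcal Y^m$ denotes the corresponding classical solution. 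A fixed point then yields feasibility ($m_t\ge L_t(\mathcal Y^m_t)$, hence $\mathbb E[\ell(t,Y_t)]\ge0$) and flatness ($m$ moves only where $m_t=L_t(\mathcal Y^m_t)$, i.e. where $\mathbb E[\ell(t,Y_t)]=0$; since the MPP jump times are totally inaccessible, $Y_{t^-}$ may be replaced by $Y_t$ $dK$‑a.e.).

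Using the a priori bound of Lemma \ref{submartingle property} for $\mathcal Y^m$ together with $\sup_tL_t(0)<\infty$, one checks that $\Psi$ maps a large enough closed ball of $C([t_0,t_1])$ into itself. The crux is contractivity. For $m^1,m^2$ in the ball one has $|f^{m^1}-f^{m^2}|\le\beta\|m^1-m^2\|_\infty=:\delta$ uniformly, so by the comparison theorem (Theorem \ref{Comparison Theorem for BSDE}) both $\mathcal Y^{m^i}$ lie between the solutions with generators $f^{m^2}\pm\delta$. Re‑running the $\theta$‑method computation of Theorem \ref{Comparison Theorem for BSDE} on $[t_0,t_1]$, now carrying the extra perturbation term $\theta\,\delta f(t)\le\delta$, leads to
$$
\mathcal Y^{m^1}_t-\theta\mathcal Y^{m^2}_t\le e^{2\beta\rho(h)}\,\delta\,\rho(h)+\tfrac{1-\theta}{\lambda}\log\mathbb E_t\Big[\exp\big\{\lambda e^{2\beta\rho(h)}\big(|\chi|+\textstyle\int_t^{t_1}(\alpha_s+\beta M+2\beta|\mathcal Y^{m^2}_s|)\,dA_s\big)\big\}\Big],
$$
where $h=t_1-t_0$, $M$ bounds the ball, and $\rho$ is the modulus from (H1'); the conditional expectation is a.s.\ finite by (H3)(d) and $\chi,\mathcal Y^{m^2}\in\mathcal E$, so letting $\theta\uparrow1$ kills the logarithmic term. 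By symmetry, $\sup_t|\mathcal Y^{m^1}_t-\mathcal Y^{m^2}_t|\le e^{2\beta\rho(h)}\beta\rho(h)\|m^1-m^2\|_\infty$, hence $\|\Psi(m^1)-\Psi(m^2)\|_\infty\le\kappa\, e^{2\beta\rho(h)}\beta\rho(h)\,\|m^1-m^2\|_\infty$, which is a strict contraction as soon as $h$ is small enough that $\kappa e^{2\beta\rho(h)}\beta\rho(h)<1$; Banach's theorem then gives a unique local deterministic flat solution on $[t_0,t_1]$ with terminal $\chi$. This is where (H1') is indispensable: under (H1) alone $\|A_{t_1}-A_{t_0}\|_\infty$ need not be small, so no short‑interval gain is available. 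It is also here that the exponential‑growth structure bites: there is no Lipschitz‑in‑$u$ estimate to lean on, so one must push the $\theta$‑method through with the perturbation and verify that the surviving deterministic term is of order $\delta\rho(h)$, small enough to beat the fixed constant $\kappa>1$ — this is the main obstacle of the whole argument.

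To globalize, fix such an $h$, partition $[0,T]$ into $\lceil T/h\rceil$ intervals, and solve backwards: the rightmost with terminal $\xi$ (feasible, $\mathbb E[\ell(T,\xi)]\ge0$), then each preceding one with terminal the already‑constructed value at its right endpoint, which lies in $\mathcal E$ and is feasible because the constraint propagates. Concatenating the local pieces produces $(Y,U,K)$ with $K\in\mathcal A_D$: it is continuous because each local $m^{(i)}$ is continuous in $t$ — as $s\mapsto L_s(\mathcal Y^{(i)}_s)$ is, by the continuity of $\ell$ in $(t,y)$, the absence of fixed‑time jumps of $\mathcal Y^{(i)}$, dominated convergence (the linear growth in (H5) plus $\mathcal Y^{(i)}\in\mathcal E$), and (H6) — and because $m^{(i)}_{t_{i+1}}=0$, so no spurious jump is created at a partition point. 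Moreover $Y\in\mathcal E$ and $U\in H_{\nu}^{2,p}$ for every $p\ge1$ (finite sums of pieces, with the exponential moments of Lemma \ref{submartingle property} staying controlled over the finitely many steps, since the terminal coefficient $e^{\beta\rho(h)}$ compounds to $e^{\beta A_T}$ and the $\alpha$‑integrals telescope), and the BSDE together with both reflection conditions hold globally by chaining the local identities.

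Uniqueness follows by backward induction: the restriction of any deterministic flat solution to $[t_i,t_{i+1}]$ is itself a local deterministic flat solution with terminal $Y_{t_{i+1}}$ (no jump of $K$ at $t_{i+1}$, since $\mathbb E[\ell(t_{i+1},Y_{t_{i+1}})]\ge0$ and $Y_{t_{i+1}^-}=Y_{t_{i+1}}$ a.s.), hence coincides with the unique fixed point of $\Psi$; therefore $Y$ is determined from $\xi$ interval by interval, and then $U$ is determined by Corollary \ref{uniqueness}, while $K$ is recovered from $m$. This establishes existence and uniqueness of the deterministic flat solution of \eqref{eq standard} in $\mathcal E\times H_{\nu}^{2,p}\times\mathcal A_D$.
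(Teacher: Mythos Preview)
Your approach is correct and genuinely different from the paper's. The paper separates the two halves: uniqueness is obtained by applying the $\theta$-method directly to two putative solutions (via the representation Lemma~\ref{representation}) and closing the resulting exponential-moment inequality on a short interval; existence is obtained by a Picard iteration in the $Y$-variable, namely $Y^{(m)}$ solves the mean-reflected equation with $Y^{(m-1)}$ frozen in the driver, and convergence is forced by the technical Lemmas~\ref{lemma_uniform estimate} and~\ref{lemma_limit estimation}. You instead run a Banach fixed point on the deterministic increment $m=K_{t_1}-K_\cdot$, which delivers existence and uniqueness simultaneously and sidesteps those two lemmas entirely. The decisive estimate you extract---that a bounded deterministic additive perturbation $\delta$ of the generator moves the exponential-growth BSDE solution by at most $e^{2\beta\rho(h)}\delta\rho(h)$ on an interval of length $h$---is obtained by keeping the $\theta\,\delta f$ term inside the process $D_t$ rather than bounding it crudely; since the extra exponent is $\zeta_\theta$ times a quantity independent of $\theta$, it survives the division by $\zeta_\theta$ after taking logarithms, and the residual $\tfrac{1-\theta}{\lambda}\log\mathbb E_t[\cdots]$ indeed vanishes as $\theta\uparrow1$ because the conditional expectation is finite a.s.\ and its exponent carries only $\theta$-free coefficients. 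This stability bound is a nice byproduct in its own right. One small imprecision: $f^m(t,0,u)-f^m(t,0,0)\ge -C_0\|u\|_t-2\beta M$ is not literally of the form required by (H4), so $f^m$ does not satisfy (H4) with an enlarged $C_0$; however, the only place (H4) enters the existence theory is Lemma~\ref{lemma monotonicity}(iv)--(v), and there the extra additive constant $2\beta M$ is harmlessly absorbed into $\alpha_t$, exactly as you indicate. What the paper's route buys is a closer parallel to the Hu--Moreau--Wang scheme and estimates (Lemmas~\ref{lemma_uniform estimate}--\ref{lemma_limit estimation}) that may be reused elsewhere; what your route buys is economy and a clean quantitative stability statement for the underlying BSDE.
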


\subsection{Uniqueness of mean-reflected BSDEs}

We first prove the uniqueness of Theorem \ref{thm1}. The following simple case plays an important role in the proof.

\noindent(\textbf{H3''})

\textbf{(a) (Continuity condition)}
For every $\omega \in \Omega, t \in[0, T]$, $u\in L^2(E,\mathcal{B}(E),\phi_t(\omega,dy))$, $(t, u) \longrightarrow f(t, u)$ is continuous.
\hspace*{\fill}\\
\hspace*{\fill}\\

\textbf{(b) (Growth condition)}
For all $t \in[0, T],\ u \in L^2(E,\mathcal{B}(E),\phi_t(\omega,dy)):\ \mathbb{P}$-a.s,
$$
-\frac{1}{\lambda} j_{\lambda}(t,- u)-\alpha_t \leq f(t, u) \leq \frac{1}{\lambda} j_{\lambda}(t, u)+\alpha_t.
$$
where $\{\alpha_t\}_{0 \leq t \leq T}$ is a non-negative progressively measurable stochastic process.
\hspace*{\fill}\\
\hspace*{\fill}\\

\textbf{(c) (Integrability condition)}
$$
\forall p>0, \quad \mathbb{E}\left[\exp \left(p\left(\left|\xi\right|+\int_0^T \alpha_s d A_s\right)\right)\right]<+\infty.
$$
\hspace*{\fill}\\

\textbf{(d) (Convexity/concavity condition)}
$\forall t\in[0, T]$, $u\in L^2(E,\mathcal{B}(E),\phi_t(\omega,dy))$, $u \rightarrow f(t, u)$ is convex or concave.
\hspace*{\fill}\\

\noindent\textbf{(H4') (Uniform linear bound condition)}
There exists a positive constant $C_0$ such that for each $t \in[0, T]$, $u\in L^2(E,\mathcal{B}(E),\phi_t(\omega,dy))$, if $f$ is convex (resp. concave) in $u$, then $f(t,u)-f(t,0)\ge -C_0\|u\|_t$ (resp. $f(t,u)-f(t,0)\le C_0\|u\|_t$).

\hspace*{\fill}\\

\begin{lemma}[A simple case]
\label{simple}
Assuming that assumptions (H1'), (H2), (H3''), (H4'), (H5), and (H6) are satisfied, the exponential growth mean-reflected BSDE (\ref{eq simple}), with a driver free of $Y$, $f(t,u)$, possesses a unique deterministic flat solution $(Y, U, K)$ in the space $ \mathcal{E}\times \cap_{p\ge 1}{H}_{\nu}^{2,p}\times \mathcal{A}_D$.
\begin{equation}
\label{eq simple}
\left\{\begin{array}{l}
Y_t=\xi+\int_t^T f\left(s, U_s\right) d A_s-\int_t^T \int_E U_s(e)q(dsde)+K_T-K_t, \quad 0 \leq t \leq T, \\
\mathbb{E}\left[\ell\left(t, Y_t\right)\right] \geq 0, \quad \forall t \in[0, T] \text { and } \int_0^T \mathbb{E}\left[\ell\left(t, Y_{t^-}\right)\right] d K_t=0.
\end{array}\right.
\end{equation}
\end{lemma}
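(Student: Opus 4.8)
The plan is to reduce the mean-reflected problem (\ref{eq simple}) to the non-reflected exponential-growth BSDE treated in Section \ref{section non reflected}, exploiting that the driver $f(t,u)$ does not depend on $Y$, and then to recover the deterministic reflecting process $K$ by an explicit running-supremum formula, in the spirit of \cite{briand2018bsdes}.

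\textbf{Reduction.} First I would observe that (H3'') is precisely (H3) with $\beta=0$ and $f$ free of $y$, that (H4') is (H4), and that (H1') implies (H1); hence Theorem \ref{thm Quadratic unbounded} provides a unique solution $(\hat Y,\hat U)\in\mathcal E\times\bigcap_{p\ge1}H_\nu^{2,p}$ of the BSDE with data $(\xi,f)$, i.e. $\hat Y_t=\xi+\int_t^T f(s,\hat U_s)\,dA_s-\int_t^T\int_E\hat U_s(e)\,q(dsde)$. Since $f$ does not see $Y$, adding a constant $c$ to the terminal merely shifts the solution, so BSDE$(\xi+c,f)$ has the unique solution $(\hat Y+c,\hat U)$. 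Consequently, if $(Y,U,K)$ is any deterministic flat solution of (\ref{eq simple}), then $Y+K$ solves BSDE$(\xi+K_T,f)$ (note $K$ is bounded, so $Y+K\in\mathcal E$), which forces $U=\hat U$ and $Y_t=\hat Y_t+K_T-K_t$; the whole problem thus reduces to identifying the deterministic, nondecreasing process $K$ with $K_0=0$.

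\textbf{Construction of $K$.} Set $\ell_t:=L_t(\hat Y_t)$, $t\in[0,T]$. Using $\hat Y\in\mathcal E$, the Lipschitz estimate (\ref{eqL}), assumptions (H5)--(H6) and the linear growth and joint continuity of $\ell$, the deterministic function $\ell_\cdot$ is nonnegative, bounded and c\`adl\`ag, with $\ell_T=L_T(\xi)$; here $\ell_T=0$ is exactly the (necessary) terminal compatibility $\mathbb E[\ell(T,\xi)]\ge0$, which I take as part of the hypotheses. Define $R_t:=\sup_{s\in[t,T]}\ell_s$ (nonincreasing, c\`adl\`ag, $R_T=0$) and $K_t:=R_0-R_t$, so that $K\in\mathcal A_D$, $K_0=0$ and $K_T-K_t=R_t$. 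Put $Y_t:=\hat Y_t+R_t$ and $U:=\hat U$; since $R$ is deterministic and bounded, $Y\in\mathcal E$ and $U\in\bigcap_{p}H_\nu^{2,p}$. Rewriting $R_t=K_T-K_t$ in $\hat Y$'s equation yields the first line of (\ref{eq simple}), and $R_t\ge\ell_t=L_t(\hat Y_t)$ together with $R_t\ge0$ gives $\mathbb E[\ell(t,Y_t)]=\mathbb E[\ell(t,\hat Y_t+R_t)]\ge0$ by the definition of $L_t$ and monotonicity of $\ell$.

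\textbf{Flatness, uniqueness, and the main obstacle.} It remains to verify $\int_0^T\mathbb E[\ell(t,Y_{t^-})]\,dK_t=0$ and the uniqueness of $K$, and this is the delicate part. The Skorokhod-type property of a running supremum shows that $dK$ charges only the set $\{R_{t^-}=\ell_{t^-}\}$; on this set $R_{t^-}=\lim_{s\uparrow t}L_s(\hat Y_s)=L_t(\hat Y_{t^-})$, using continuity of $L_\cdot$ on $[0,T]$ and the $\kappa$-Lipschitz continuity of $L_t$ together with $\hat Y_s\to\hat Y_{t^-}$ in $\mathbb L^1$ (itself a consequence of the exponential a priori bounds of Section \ref{section non reflected}). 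Because the jumps of $\hat Y$ originate from the MPP and are therefore totally inaccessible (see the discussion following (H1)), they a.s. avoid the fixed countable support of the deterministic measure $dK$, so $dK$-a.e. one has $\hat Y_{t^-}=\hat Y_t$, whence $\mathbb E[\ell(t,Y_{t^-})]=\mathbb E[\ell(t,\hat Y_t+L_t(\hat Y_t))]=0$ (the last equality by continuity of $x\mapsto\mathbb E[\ell(t,x+\hat Y_t)]$ and $R_{t^-}>0$ at an increase time of $K$). For uniqueness, any deterministic flat solution shares the same $(\hat Y,\hat U)$ by the reduction, its reflecting process satisfies $K_T-K_t\ge\sup_{s\in[t,T]}\ell_s=R_t$ from the constraint, and the flatness forces equality exactly as in the uniqueness of the deterministic Skorokhod problem in \cite{briand2018bsdes}, so $K$ coincides with the one built above. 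I expect this verification of the Skorokhod condition to be the main obstacle: unlike the Brownian case $Y$ genuinely jumps, so one must match the left-limit barrier $\lim_{s\uparrow t}L_s(\hat Y_s)$ with $L_t(\hat Y_{t^-})$ and invoke total inaccessibility to neutralize the discrepancy between $\hat Y_{t^-}$ and $\hat Y_t$ along the support of $dK$.
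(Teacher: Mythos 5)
Your proposal is correct and takes essentially the same route as the paper: solve the unreflected exponential-growth BSDE $(\xi,f)$ via Theorem \ref{thm Quadratic unbounded} and define $K_t=\sup_{0\le s\le T}L_s(y_s)-\sup_{t\le s\le T}L_s(y_s)$ in the spirit of \cite{briand2018bsdes}, the paper simply delegating the verification of the flat Skorokhod condition and uniqueness to \cite{2310.15203} while you carry it out explicitly. Your verification is sound, up to the harmless inaccuracy that the deterministic measure $dK$ need not have countable support; the total-inaccessibility argument still applies, since for every fixed $t$ one has $\hat Y_{t^-}=\hat Y_t$ a.s., and one then integrates in $t$ against $dK$.
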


\begin{proof}[Proof of Lemma \ref{simple}]
Consider the following BSDE:
\begin{equation}
\label{eq small fixed}
y_t=\xi+\int_t^T f(s,u_s) d A_s-\int_t^T \int_E u_s(e) q(d s d e),
\end{equation}
which is a special case of (\ref{BSDE}). Therefore, owing to Theorem \ref{thm Quadratic unbounded}, the BSDE (\ref{eq small fixed}) has a unique solution $(y, u) \in\mathcal E\times \cap_{p\ge 1}{H}_{\nu}^{2,p}$.

Thus, inspired by \cite{briand2018bsdes,Briand_2020}, we define
$$
k_t=\sup_{0\le s\le T}L_s(y_s)-\sup _{t\le s\le T}L_s(y_s).
$$
Inherited from the proof of Theorem 4.1 in \cite{2310.15203},
$(Y_t, U_t, K_t)=((y_t+k_T-k_t), u_t, k_t)\in \mathcal E\times \cap_{p\ge 1} {H}_{\nu}^{2,p} \times \mathcal{A}_D$ is the unique deterministic flat solution to the BSDE with mean reflection (\ref{eq simple}).

\end{proof}

A straightforward corollary from Lemma \ref{simple} reads.
\begin{corollary}
\label{cor_1}
Suppose the assumptions in Theorem \ref{thm1} hold, and $\hat Y$ belongs to the space $\mathcal{E}$. Then, the exponential growth mean-reflected BSDE (\ref{eq standard}), with the driver $f^{\hat Y}(t,u):=f(t,\hat Y,u)$, has a unique deterministic flat solution $(Y, U, K)$ in the space $ \mathcal{E}\times \cap_{p\ge 1}{H}_{\nu}^{2,p}\times \mathcal{A}_D$.
\end{corollary}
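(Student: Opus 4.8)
The plan is to freeze the $Y$-argument and reduce the statement to the ``simple case'' Lemma~\ref{simple}. Once $\hat Y\in\mathcal E$ is fixed inside the generator, the driver $f^{\hat Y}(t,u)=f(t,\hat Y_t,u)$ no longer involves the solution's $Y$-component, so it suffices to check that the pair $(\xi,f^{\hat Y})$ satisfies the hypotheses of Lemma~\ref{simple} after enlarging the reference process $\alpha$ to $\bar\alpha_t:=\alpha_t+\beta|\hat Y_t|$, whose role is to absorb the $\beta|\hat Y_t|$ terms produced by the Lipschitz-in-$y$ and exponential-growth parts of (H3). Assumptions (H1'), (H5), (H6) involve only $A$ and $\ell$ and transfer verbatim; $u\mapsto f^{\hat Y}(t,u)$ is continuous by (H3)(a) and convex or concave by (H3)(e), which gives (H3'')(a) and (H3'')(d); and since $\hat Y\in\mathcal E\subset S^0$ is adapted and c\`adl\`ag, hence progressively measurable, the map $(\omega,t)\mapsto f(\omega,t,\hat Y_t(\omega),U_t(\omega,\cdot))$ is progressively measurable by composition with (H2) for $f$. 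The growth bound (H3'')(b) for $f^{\hat Y}$, with $\bar\alpha$ in place of $\alpha$, is exactly (H3)(c) read at the point $(\hat Y_t,u)$, and the integrability (H3'')(c) follows from $\int_0^T\bar\alpha_s\,dA_s\le\int_0^T\alpha_s\,dA_s+\beta\|A_T\|_\infty\hat Y_*$ together with Cauchy--Schwarz, (H3)(d), and the fact that $\mathbb E[e^{p\hat Y_*}]<\infty$ for every $p>0$ (i.e.\ $\hat Y\in\mathcal E$).

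The step I expect to be the main obstacle is the uniform linear bound (H4'). Decomposing $f^{\hat Y}(t,u)-f^{\hat Y}(t,0)=[f(t,\hat Y_t,u)-f(t,0,u)]+[f(t,0,u)-f(t,0,0)]+[f(t,0,0)-f(t,\hat Y_t,0)]$ and using the $\beta$-Lipschitz property of $f$ in $y$ together with (H4) for $f$ yields only $f^{\hat Y}(t,u)-f^{\hat Y}(t,0)\ge -C_0\|u\|_t-2\beta|\hat Y_t|$, i.e.\ the linear bound holds only up to an additive term dominated by $\bar\alpha_t$. One therefore has to check that this weaker ``linear bound modulo $\bar\alpha$'' is all that the argument of Lemma~\ref{simple} actually requires: (H4') enters only through the lower growth estimate for the inf-convolution approximants $f^{\hat Y,n}(t,u)=\inf_{r}\{f^{\hat Y}(t,r)+n\|u-r\|_t\}$ — the analogue of Lemma~\ref{lemma monotonicity}(iv), which underlies Theorem~\ref{thm Quadratic unbounded} and hence Lemma~\ref{simple} — and running that computation with the weakened bound produces, for $n>C_0$, an estimate of the form $f^{\hat Y,n}(t,u)\ge -c\,\bar\alpha_t-\frac{1}{\lambda}j_\lambda(t,-u)$ with a purely numerical constant $c$. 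Since $\bar\alpha$ still has exponential moments of every order, this larger constant in front of $\bar\alpha$ is harmless: it affects none of the a priori estimates of Lemma~\ref{submartingle property} and Proposition~\ref{priori estimate on U and K} qualitatively, nor the integrability class of $(Y,U)$.

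Once the hypotheses are in place, Lemma~\ref{simple} applied to $(\xi,f^{\hat Y})$ (with $\alpha$ read as $\bar\alpha$) yields the claim directly: the associated non-reflected BSDE $y_t=\xi+\int_t^T f(s,\hat Y_s,u_s)\,dA_s-\int_t^T\int_E u_s(e)\,q(ds\,de)$ is well-posed in $\mathcal E\times\bigcap_{p\ge1}H_\nu^{2,p}$ by Theorem~\ref{thm Quadratic unbounded}; the reflecting component is then built by setting $K_t=k_t:=\sup_{0\le s\le T}L_s(y_s)-\sup_{t\le s\le T}L_s(y_s)$ and $Y_t=y_t+K_T-K_t$ exactly as in the proof of Lemma~\ref{simple}, with~(\ref{eqL}) from Remark~\ref{remark1} providing the Lipschitz continuity of $L$ needed for the Skorokhod condition $\int_0^T\mathbb E[\ell(t,Y_{t^-})]\,dK_t=0$; this produces a deterministic flat solution $(Y,U,K)\in\mathcal E\times\bigcap_{p\ge1}H_\nu^{2,p}\times\mathcal{A}_D$. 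Uniqueness is inherited from the uniqueness statement of Lemma~\ref{simple} — equivalently, from Corollary~\ref{uniqueness} for the $y$-part together with the deterministic and flat requirements on $K$.
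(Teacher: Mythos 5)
Your proposal is correct and follows the same route the paper takes: the paper offers no written proof of Corollary \ref{cor_1}, presenting it as an immediate consequence of Lemma \ref{simple} after freezing $\hat Y$ in the driver and absorbing the $\beta|\hat Y_t|$ terms into the coefficient $\bar\alpha_t=\alpha_t+\beta|\hat Y_t|$, which is exactly your reduction. Where you go beyond the paper is in noticing that (H4') does not transfer verbatim to $f^{\hat Y}$ — only the weakened bound $f^{\hat Y}(t,u)-f^{\hat Y}(t,0)\ge -C_0\|u\|_t-2\beta|\hat Y_t|$ is available — and in arguing that this suffices because (H4') is only used to produce the lower growth estimate of Lemma \ref{lemma monotonicity}(iv) for the inf-convolution approximants, where the extra $2\beta|\hat Y_t|$ merely inflates the numerical constant in front of $\bar\alpha$, harmless since $\bar\alpha$ retains exponential moments of all orders; this observation and its resolution are sound, and indeed more careful than the paper's treatment (the same point is silently needed in Lemma \ref{representation} and in the iteration defining $Y^{(m)}$ in the existence proof of Theorem \ref{thm1}). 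One small addition: (H4') also enters through assertion (v) of Lemma \ref{lemma monotonicity}, i.e.\ through the identity $f^n(t,0)=f(t,0)$ used to carry the square-integrability hypothesis of Theorem \ref{thm_lip} to the approximants inside Theorem \ref{thm Quadratic unbounded}; with your weakened bound one only gets $|f^{\hat Y,n}(t,0)-f^{\hat Y}(t,0)|\le 2\beta|\hat Y_t|$, but since $\hat Y\in\mathcal E$ and $A_T$ is bounded this perturbation is square integrable, so the argument survives — worth stating explicitly, but not a gap.
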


We provide a representation of the solution to (\ref{eq standard}), which proves essential in subsequent analyses.
\begin{lemma}[The representation of the solution]
\label{representation}
  Assuming that assumptions (H1') and (H2)–(H6) hold, let $(Y, U, K) \in \mathcal E\times\cap_{p\ge 1} H_{\nu}^{2,p}\times \mathcal A_D$ be a deterministic flat solution to the BSDE with mean reflection (\ref{eq standard}). Then, for each $t \in [0, T]$
$$
\left(Y_t, U_t, K_t \right)=\left(y_t+\sup _{t \leq s \leq T} L_s\left(y_s\right), u_t, \sup _{0 \leq s \leq T} L_s\left(y_s\right)-\sup _{t \leq s \leq T} L_s\left(y_s\right)\right),
$$
where $(y, u) \in \mathcal E\times\cap_{p\ge 1} H_{\nu}^{2,p}$ represents the solution to the following BSDE (\ref{eq samll}) with the driver $f\left(s, Y_s, u_s\right)$ over the time horizon $[0, T]$, and $Y \in \mathcal E$ is determined by the solution of (\ref{eq standard}).
\begin{equation}
{\label{eq samll}}
y_t=\xi+\int_t^T f\left(s, Y_s, u_s\right) d A_s-\int_t^T \int_E u_s(e) q(d s d e).
\end{equation}
\end{lemma}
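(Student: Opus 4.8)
The idea is to \emph{freeze} the first component and reduce to the driver-free situation of Lemma \ref{simple}. Since $Y\in\mathcal E$ is part of the given data, $\hat f(s,u):=f(s,Y_s,u)$ is a genuinely $Y$-free generator, and because $f(s,Y_s,U_s)=\hat f(s,U_s)$ the triple $(Y,U,K)$ is, verbatim, a deterministic flat solution of the mean-reflected BSDE obtained from (\ref{eq standard}) upon replacing $f(s,\cdot,\cdot)$ by $\hat f(s,\cdot)$. Observe that $\hat f$ satisfies (H2) and (H3) with the process $\alpha$ replaced by $\hat\alpha_s:=\alpha_s+\beta|Y_s|$ (it is trivially $\beta$-Lipschitz in $y$), and that $\mathbb E\big[\exp\big(p(|\xi|+\int_0^T\hat\alpha_s\,dA_s)\big)\big]<\infty$ for every $p>0$ by (H3)(d), $Y\in\mathcal E$ and $\|A_T\|_\infty<\infty$ (Cauchy--Schwarz); thus the frozen problem is of exactly the type treated in the preceding subsections.

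The first step is to recover $(y,u)$. Put $y_t:=Y_t-(K_T-K_t)$; since $0\le K_t\le K_T$ with $K_T$ a deterministic constant we have $y\in\mathcal E$, and rearranging the forward equation in (\ref{eq standard}) shows that
$$
y_t=\xi+\int_t^T f(s,Y_s,U_s)\,dA_s-\int_t^T\int_E U_s(e)\,q(dsde),\qquad 0\le t\le T,
$$
i.e.\ $(y,U)$ solves the non-reflected BSDE (\ref{eq samll}) with $u:=U$. As $\hat f=f(\cdot,Y_\cdot,\cdot)$ satisfies (H1)--(H3), Corollary \ref{uniqueness} shows that (\ref{eq samll}) has at most one solution in $\mathcal E\times\cap_{p\ge1}H_\nu^{2,p}$; hence $(y,U)$ is \emph{the} solution, which fixes $y$ and identifies $U=u$ as in the statement.

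It then remains to identify $K$, equivalently to prove $K_T-K_t=\sup_{t\le s\le T}L_s(y_s)$ for every $t$; together with $K_0=0$ this yields $K_t=\sup_{0\le s\le T}L_s(y_s)-\sup_{t\le s\le T}L_s(y_s)$ and $Y_t=y_t+\sup_{t\le s\le T}L_s(y_s)$ (consistency at $t=T$ uses the constraint at $T$, which reads $\mathbb E[\ell(T,\xi)]\ge0$, so $L_T(\xi)=0$). The lower bound $K_T-K_t\ge\sup_{t\le s\le T}L_s(y_s)$ is immediate: for $s\in[t,T]$, the relation $\mathbb E[\ell(s,y_s+(K_T-K_s))]=\mathbb E[\ell(s,Y_s)]\ge0$ together with $K_T-K_s\ge0$ and the monotonicity of $x\mapsto\mathbb E[\ell(s,y_s+x)]$ forces $K_T-K_s\ge L_s(y_s)$, whence $K_T-K_t\ge K_T-K_s\ge L_s(y_s)$ since $K$ is nondecreasing, and one takes the supremum over $s$. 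The reverse inequality follows from minimality of the flat solution: in the frozen driver-free setting, Corollary \ref{cor_1} applied with $\hat Y=Y$ (equivalently Lemma \ref{simple} for $\hat f$) gives uniqueness of the deterministic flat solution, and the explicit construction carried out in the proof of Lemma \ref{simple} (following \cite{2310.15203}) produces precisely the triple $\big(y_t+\sup_{t\le s\le T}L_s(y_s),\,u_t,\,\sup_{0\le s\le T}L_s(y_s)-\sup_{t\le s\le T}L_s(y_s)\big)$, with $(y,u)$ the solution of the associated non-reflected BSDE, which is (\ref{eq samll}); by uniqueness the given $(Y,U,K)$ must coincide with it.

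The main obstacle is the reverse inequality $K_T-K_t\le\sup_{t\le s\le T}L_s(y_s)$, i.e.\ that the Skorokhod/flat condition $\int_0^T\mathbb E[\ell(s,Y_{s^-})]\,dK_s=0$ forces $K$ to increase only minimally. The standard route is by contradiction: if $K_T-K_{t_0}>\sup_{t_0\le s\le T}L_s(y_s)$ for some $t_0<T$, then, using the right-continuity of $t\mapsto\sup_{t\le u\le T}L_u(y_u)$ (a consequence of $y$ being c\`adl\`ag, the Lipschitz bound (\ref{eqL}), and the continuity of $\ell$ in $t$), one exhibits a right-neighbourhood of $t_0$ on which $\mathbb E[\ell(s,Y_{s^-})]>0$ strictly --- by the strict monotonicity of $\ell(s,\cdot)$ from (H5) --- so the flat condition forces $dK\equiv0$ there; tracking the left limits of $K$ and of $\sup_{\cdot\le u\le T}L_u(y_u)$ up to the first time the two processes meet yields the contradiction. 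Because freezing $Y$ converts (\ref{eq standard}) into an instance of the driver-free mean-reflected BSDE already handled in Lemma \ref{simple}, in the write-up I would invoke that argument rather than reproduce it.
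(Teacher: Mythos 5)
Your proposal is correct and follows essentially the paper's own route: freeze $Y$ in the driver, solve the non-reflected BSDE (\ref{eq samll}) for $(y,u)$, and identify $(Y,U,K)$ with the explicit flat solution constructed in Lemma \ref{simple} by invoking the uniqueness of deterministic flat solutions for the frozen problem (Corollary \ref{cor_1}). The additional material you include --- recovering $(y,u)=(Y_\cdot-(K_T-K_\cdot),\,U)$ via Corollary \ref{uniqueness}, the direct lower bound $K_T-K_t\ge\sup_{t\le s\le T}L_s(y_s)$, and the sketched contradiction argument for the reverse inequality --- is sound but becomes redundant once uniqueness from Lemma \ref{simple} is invoked, which is exactly how the paper concludes.
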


\begin{proof}
First, given $Y\in\mathcal E$, it is obvious that $f\left(s, Y_s, u_s\right)$ satisfies assumption (H2). Consequently, owing to Theorem \ref{thm Quadratic unbounded}, (\ref{eq samll}) has a unique solution $(y, u) \in\mathcal E\times\cap_{p\ge 1} H_{\nu}^{2,p}$.

Define
$$
k_t=\sup_{0\le s\le T}L_s(y_s)-\sup _{t\le s\le T}L_s(y_s).
$$
By means of the proof of Lemma \ref{simple}, $(y_t+k_T-k_t,u_t,k_t)\in \mathcal E\times\cap_{p\ge 1} H_{\nu}^{2,p} \times \mathcal A_D$ is the unique deterministic flat solution to the following BSDE with mean reflection, with the driver $f\left(s, Y_s, u_s\right)$:
\begin{equation}{\label{tilde}}
\left\{\begin{array}{l}
\hat{Y}_t =\xi+\int_t^T f\left(s, Y_s, \hat U_s\right) d A_s
-\int_t^T \int_E \hat{U}_s(e) q(d s d e)+ \left(\hat{K}_T-\hat{K}_t\right), \quad \forall t \in[0, T] \text { a.s. };\\
\mathbb{E}\left[\ell\left(t, \hat{Y}_t\right)\right] \geq 0, \quad \forall t \in[0, T].
\end{array}\right.
\end{equation}

Notice that $(Y,U,K)$ is also a deterministic flat solution to (\ref{tilde}). By uniqueness, $(Y_t,U_t,K_t)=(y_t+k_T-k_t,u_t,k_t)$.

Therefore,
$$
\left(Y_t,U_t,K_t \right)=\left(y_t+\sup _{t \leq s \leq T} L_s\left(y_s\right), u_t, \sup _{0 \leq s \leq T} L_s\left(y_s\right)-\sup _{t \leq s \leq T} L_s\left(y_s\right)\right).
$$
\end{proof}

Next, we establish the uniqueness of the mean-reflected BSDE (\ref{eq standard}).
\begin{proof}[Proof of Uniqueness in Theorem \ref{thm1}]
For $i=1,2$, let $\left(Y^i, U^i, K^i\right)$ be a deterministic $\mathcal{E}\times \cap_{p\ge 1}{H}_{\nu}^{2,p} \times \mathcal{A}_D$-solution to the exponential growth mean-reflected BSDE. Following from the representation provided in Lemma \ref{representation}, we obtain
\begin{equation}
\label{L_relation}
Y_t^i:=y_t^i+\sup _{t \leq s \leq T} L_s\left(y_s^i\right), \quad \forall t \in[0, T],
\end{equation}
where $\left(y^i, z^i\right) \in \mathcal{E}\times \cap_{p\ge 1}{H}_{\nu}^{2, p}$ is the solution to the following BSDE:
\begin{equation}
\label{BSDE01}
y_t^i=\xi+\int_t^T f\left(s, Y_s^i, u_s^i\right) d A_s-\int_t^T \int_E u_s^iq(dsde).
\end{equation}
Without loss of generality, let us assume that $f(t, y,\cdot)$ is convex. For each $\theta \in (0,1)$, we denote
$$
\delta_\theta \ell=\frac{\ell^1-\theta\ell^2}{1-\theta}, \delta_\theta \widetilde{\ell}=\frac{\ell^2-\theta\ell^1}{1-\theta} \text { and } \delta_\theta \bar{\ell}:=\left|\delta_\theta \ell\right|+\left|\delta_\theta \widetilde{\ell}\right|
$$
for $\ell=Y,\ y$ and $u$. Then, the pair of processes $\left(\delta_\theta y, \delta_\theta u\right)$ satisfies the following BSDE:
$$
\delta_\theta y_t=\xi+\int_t^T\left(\delta_\theta f\left(s, \delta_\theta u_s\right)+\delta_\theta f_0(s)\right) d A_s-\int_t^T\int_E \delta_\theta u_s q(dsde),
$$
where the generator is given by
$$
\begin{aligned}
& \delta_\theta f_0(t)=\frac{1}{1-\theta}\left(f\left(t, Y_t^1, u_t^1\right)-f\left(t, Y_t^2, u_t^1\right)\right), \\
& \delta_\theta f(t, u)=\frac{1}{1-\theta}\left(f\left(t, Y_t^2,\theta u_t^2+(1-\theta)u\right)-\theta f(t, Y_t^2,u_t^2)\right).
\end{aligned}
$$
Recalling assumption (H3), we have
$$
\begin{aligned}
&| \delta_\theta f_0(t) |\leq \beta\left(\left|Y_t^2\right|+\left|\delta_\theta Y_t\right|\right), \\
& \delta_\theta f(t, u) \leq f\left(t, Y_t^2, u\right) \leq \alpha_t+\beta\left|Y_t^2\right|+\frac{1}{\lambda}j_\lambda(t, u).
\end{aligned}
$$
Set,
$$
\begin{aligned}
\chi & =\int_0^T \alpha_s d A_s+2 \beta A_T\left(\sup _{s \in[0, T]}\left|Y_s^1\right|+\sup _{s \in[0, T]}\left|Y_s^2\right|\right), \\
\tilde{\chi} & =\int_0^T \alpha_s d A_s+2 \beta A_T\left(\sup _{s \in[0, T]}\left|Y_s^1\right|+\sup _{s \in[0, T]}\left|Y_s^2\right|\right)+\sup _{s \in[0, T]}\left|y_s^1\right|+\sup _{s \in[0, T]}\left|y_s^2\right|.
\end{aligned}
$$
Applying assertion (ii) of Lemma \ref{a priori esti 2} to (\ref{BSDE01}), we deduce that for any $p \geq 1$,
$$
\exp \left\{p\lambda \left(\delta_\theta y_t\right)^{+}\right\} \leq \mathbb{E}_t\left[\exp \left\{p\lambda\left(|\xi|+\chi+\beta(A_T-A_t)\sup _{s \in[t, T]}\left|\delta_\theta Y_s\right|\right)\right\}\right].
$$
Similarly, we have
$$
\exp \left\{p\lambda\left(\delta_\theta \widetilde{y}_t\right)^{+}\right\} \leq \mathbb{E}_t\left[\exp \left\{p\lambda\left(|\xi|+\chi+\beta(A_T-A_t)\sup _{s \in[t, T]}\left|\delta_\theta \widetilde Y_s\right|\right)\right\}\right].
$$

Note the fact that
$$
\left(\delta_\theta y\right)^{-} \leq\left(\delta_\theta \widetilde{y}\right)^{+}+2\left|y^1\right| \text { and }\left(\delta_\theta \tilde{y}\right)^{-} \leq\left(\delta_\theta y\right)^{+}+2\left|y^2\right|,
$$
we have
$$
\begin{aligned}
\exp \left\{p\lambda \left|\delta_\theta y_t\right|\right\} \vee \exp \left\{p\lambda \left|\delta_\theta \tilde{y}_t\right|\right\} & \leq \exp \left\{p\lambda\left(\left(\delta_\theta y_t\right)^{+}+\left(\delta_\theta \tilde{y}_t\right)^{+}+2\left|y_t^1\right|+2\left|y_t^2\right|\right)\right\} \\
& \leq \mathbb{E}_t \left[ \exp \left\{p\lambda\left(|\xi|+\tilde{\chi}+\beta(A_T-A_t)\sup _{s \in[t, T]} \delta_\theta \bar{Y}_s\right)\right\}\right]^2.
\end{aligned}
$$
Applying Doob’s maximal inequality and H\"older’s inequality, we conclude that for each $p \geq 1$ and $t \in [0, T]$,
\begin{equation}
\label{eq_exp estimate}
\begin{aligned}
\mathbb{E}\left[\exp \left\{p\lambda \sup _{s \in[t, T]} \delta_\theta \bar{y}_s\right\}\right] & \leq \mathbb{E}\left[\exp \left\{p\lambda \sup _{s \in[t, T]}\left|\delta_\theta y_s\right|\right\} \exp \left\{p\lambda \sup _{s \in[t, T]}\left|\delta_\theta \tilde{y}_s\right|\right\}\right] \\
& \leq 4 \mathbb{E}\left[\exp \left\{4 p\lambda\left(|\xi|+\tilde{\chi}+\beta(A_T-A_t)\sup _{s \in[t, T]} \delta_\theta \bar{Y}_s\right)\right\}\right].
\end{aligned}
\end{equation}
Set $C_1:=\sup _{0 \leq s \leq T}\left|L_s(0)\right|+2 \kappa \sup _{s \in[0, T]} \mathbb{E}\left[\left|y_s^1\right|+\left|y_s^2\right|\right]$. Recalling (\ref{L_relation}) and assumption (H6), we obtain
$$
\left|\delta_\theta Y_t\right| \leq C_1+\left|\delta_\theta y_t\right|+\kappa \sup _{t \leq s \leq T} \mathbb{E}\left[\left|\delta_\theta y_s\right|\right] \text { and }\left|\delta_\theta \widetilde{Y}_t\right| \leq C_1+\left|\delta_\theta \widetilde{y}_t\right|+\kappa \sup _{t \leq s \leq T} \mathbb{E}\left[\left|\delta_\theta \widetilde{y}_s\right|\right], \forall t \in[0, T].
$$
Then, by definition of $\delta_\theta \bar{Y}_t$, we obtain
$$
\delta_\theta \bar{Y}_t=\left|\delta_\theta Y_t\right|+\left|\delta_\theta \widetilde{Y}_t\right| \le 2C_1+\delta_\theta \bar{y}_t+2\kappa \sup _{t \leq s \leq T} \mathbb{E}\left[\left|\delta_\theta \bar y_s\right|\right].
$$
Therefore, in conjunction with Jensen’s inequality, this implies that for each $p \geq 1$ and $t \in [0, T]$,
\begin{equation}
\label{jensen01}
\begin{aligned}
\mathbb{E}\left[\exp \left\{p\lambda \sup _{s \in[t, T]} \delta_\theta \bar{Y}_s\right\}\right] &\le e^{2 p\lambda C_1}\mathbb{E}\left[\exp \left\{p\lambda \sup _{s \in[t, T]} \delta_\theta \bar{y}_s\right\}\right]\exp \left\{2 \kappa p\lambda \sup _{s \in[t, T]}\sup _{u \in[s, T]}\mathbb E\left[ \delta_\theta \bar{y}_u\right]\right\}
\\
&\le e^{2 p\lambda C_1}\mathbb{E}\left[\exp \left\{p\lambda \sup _{s \in[t, T]} \delta_\theta \bar{y}_s\right\}\right]\exp \left\{2 \kappa p\lambda \sup _{s \in[t, T]}\mathbb E\left[ \delta_\theta \bar{y}_s\right]\right\}
\\
&\le e^{2 p\lambda C_1}\mathbb{E}\left[\exp \left\{p\lambda \sup _{s \in[t, T]} \delta_\theta \bar{y}_s\right\}\right]\exp \left\{2 \kappa p\lambda \mathbb E\left[\sup _{s \in[t, T]} \delta_\theta \bar{y}_s\right]\right\}
\\
&\leq e^{2 p\lambda C_1} \mathbb{E}\left[\exp \left\{p\lambda \sup _{s \in[t, T]} \delta_\theta \bar{y}_s\right\}\right] \mathbb{E}\left[\exp \left\{2 \kappa p\lambda \sup _{s \in[t, T]} \delta_\theta \bar{y}_s\right\}\right] {\quad\text { (Jensen’s inequality)}}\\
&\leq e^{2 p\lambda C_1} \mathbb{E}\left[\exp \left\{(2+4 \kappa) p\lambda \sup _{s \in[t, T]} \delta_\theta \bar{y}_s\right\}\right] \\
&\leq 4 \mathbb{E}\left[\exp \left\{(8+16 \kappa) p\lambda\left(|\xi|+\tilde{\chi}+C_1+\beta(A_T-A_t)\sup _{s \in[t, T]} \delta_\theta \bar{Y}_s\right)\right\}\right],
\end{aligned}
\end{equation}
where we used (\ref{eq_exp estimate}) in the last inequality.
In view of (H1'), we can select a constant $h \in (0, T]$ depending solely on $\beta$ and $\kappa$, such that $T=Nh$, $\max_{1\le i\le N}\{(16+32 \kappa) \beta \|A_{ih}-A_{(i-1)h}\|_\infty\}<1$. By H\"older’s inequality, we derive that for any $p \geq 1$,
$$
\begin{aligned}
& \mathbb{E}\left[\exp \left\{p\lambda \sup _{s \in[T-h, T]} \delta_\theta \bar{Y}_s\right\}\right] \\
& \leq 4 \left(\mathbb{E}\left[\exp \left\{(16+32 \kappa) p\lambda\left(|\xi|+\tilde{\chi}+C_1\right)\right\}\right]\right)^{\frac{1}{2}} \mathbb{E}\left[\exp \left\{(16+32 \kappa) \beta\|A_T-A_{T-h}\|_\infty p\lambda \sup _{s \in[T-h, T]} \delta_\theta \bar{Y}_s\right\}\right]^{1/2} \\
& \leq 4 \mathbb{E}\left[\exp \left\{(16+32 \kappa) p\lambda\left(|\xi|+\tilde{\chi}+C_1\right)\right\}\right] \mathbb{E}\left[\exp \left\{p\lambda \sup _{s \in[T-h, T]} \delta_\theta \bar{Y}_s\right\}\right]^{(8+16 \kappa) \beta\|A_T-A_{T-h}\|_\infty},
\end{aligned}
$$
which together with the fact that $(16+32 \kappa) \beta\|A_T-A_{T-h}\|_\infty<1$ implies that for any $p \geq 1$ and $\theta \in(0,1)$
$$
\mathbb{E}\left[\exp \left\{p\lambda \sup _{s \in[T-h, T]} \delta_\theta \bar{Y}_s\right\}\right] \leq \mathbb{E}\left[4 \exp \left\{(16+32 \kappa) p \lambda\left(|\xi|+\tilde{\chi}+C_1\right)\right\}\right]^{\frac{1}{1-(8+16 \kappa) \beta\|A_T-A_{T-h}\|_\infty}}<\infty.
$$
Note that $Y^1-Y^2=(1-\theta)\left(\delta_\theta Y-Y^2\right)$. It follows that
$$
\mathbb{E}\left[\sup _{t \in[T-h, T]}\left|Y_t^1-Y_t^2\right|\right] \leq(1-\theta)\left(\frac{1}{p\lambda} \sup _{\theta \in(0,1)} \mathbb{E}\left[\exp \left\{p\lambda\sup _{s \in[T-h, T]} \delta_\theta \bar{Y}_s\right\}\right]+\mathbb{E}\left[\sup _{t \in[0, T]}\left|Y_t^2\right|\right]\right).
$$
Letting $\theta \rightarrow 1$, we obtain $Y^1=Y^2$. The representation provided in Lemma \ref{representation} gives $\left(U^1, K^1\right)=\left(U^2, K^2\right)$ on $[T-h, T]$. The uniqueness of the solution across the entire interval stems from the uniqueness observed over each small time interval. Hence, the proof is concluded.
\end{proof}

\begin{remark}
\label{remcvx}
When $f$ is concave in $u$, we employ $\theta l^1-l^2$ and $\theta l^2-l^1$ in the definition of $\delta_\theta l$ and $\delta_\theta\tilde l$, respectively. The parallel proof holds. Hence, in the following discussion, unless explicitly stated otherwise, we always assume that $f$ is convex in $u$.
\end{remark}

\subsection{Existence of mean-reflected BSDEs}
Next, we proceed with the proof of the existence part of Theorem \ref{thm1}. Before demonstrating its existence, we outline some useful a priori estimates. Let us assume that $f$ is convex in $u$ without loss of generality.

According to Corollary \ref{cor_1}, we recursively define a sequence of stochastic processes $\left(Y^{(m)}\right)_{m=1}^{\infty}$ via the following exponential growth BSDE with mean reflection:
$$
\left\{\begin{array}{l}
Y_t^{(m)}=\xi+\int_t^T f\left(s, Y_s^{(m-1)}, U_s^{(m)}\right) d A_s-\int_t^T\int_E U_s^{(m)}(e)q(dsde) +K_T^{(m)}-K_t^{(m)}, \quad 0 \leq t \leq T, \\
\mathbb{E}\left[\ell\left(t, Y_t^{(m)}\right)\right] \geq 0, \quad \forall t \in[0, T] \text { and } \int_0^T \mathbb{E}\left[\ell\left(t, Y_{t^-}^{(m)}\right)\right] d K_t^{(m)}=0,
\end{array}\right.
$$
where $Y^{(0)} \equiv 0$. It is obvious that $\left(Y^{(m)}, U^{(m)}, K^{(m)}\right) \in \mathcal{E}\times \cap_{p\ge 1}{H}_{\nu}^{2,p} \times \mathcal{A}_D$.

\begin{lemma}
\label{lemma_uniform estimate}
 Assume that the conditions in Theorem \ref{thm1} are satisfied. Then, for any $p \geq 1$, we have
$$
\sup _{m \geq 0} \mathbb{E}\left[\exp \left\{p\lambda \sup _{s \in[0, T]}\left|Y_s^{(m)}\right|\right\}\right]<\infty.
$$
\end{lemma}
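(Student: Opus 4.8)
The plan is to obtain a bound that is uniform in $m$ by exploiting the representation of the mean-reflected solution through the associated non-reflected BSDE together with the a priori estimate of Lemma \ref{submartingle property}, and then to close the induction via a short-time contraction argument based on assumption (H1'). First I would fix $m\ge 1$ and invoke Lemma \ref{representation} (applied with the fixed driver $f(s,Y^{(m-1)}_s,\cdot)$, which is legitimate because $Y^{(m-1)}\in\mathcal E$) to write
$$
Y^{(m)}_t=y^{(m)}_t+\sup_{t\le s\le T}L_s\bigl(y^{(m)}_s\bigr),\qquad t\in[0,T],
$$
where $(y^{(m)},u^{(m)})\in\mathcal E\times\cap_{p\ge1}H^{2,p}_\nu$ solves the BSDE with driver $f(s,Y^{(m-1)}_s,u_s)$. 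Since $f$ satisfies the growth bound (H3)(c), the driver of this BSDE obeys $|f(s,Y^{(m-1)}_s,u)|\le \alpha_s+\beta|Y^{(m-1)}_s|+\tfrac1\lambda\bigl(j_\lambda(s,u)+j_\lambda(s,-u)\bigr)$ with the "source" term $\alpha_s$ replaced by $\alpha_s+\beta|Y^{(m-1)}_s|$; applying Lemma \ref{submartingle property}(i) (in the degenerate, $y$-free form, since after freezing $Y^{(m-1)}$ the generator no longer depends on the unknown $y$) then yields, for every $p\ge1$ and $t\in[0,T]$,
$$
\exp\bigl\{p\lambda|y^{(m)}_t|\bigr\}\le \mathbb E_t\Bigl[\exp\Bigl\{p\lambda e^{\beta A_T}|\xi|+p\lambda\int_t^T e^{\beta A_s}\bigl(\alpha_s+\beta|Y^{(m-1)}_s|\bigr)\,dA_s\Bigr\}\Bigr].
$$

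Next I would feed this into the Lipschitz estimate for $L$ from Remark \ref{remark1}: since $|L_s(\eta)|\le |L_s(0)|+\kappa\mathbb E[|\eta|]$ and $|L_s(0)|$ is bounded (it is controlled by (H5)(4) and (H6), uniformly in $s$ by continuity on the compact $[0,T]$), the representation gives
$$
\sup_{t\in[0,T]}|Y^{(m)}_t|\le \sup_{t\in[0,T]}|y^{(m)}_t|+C_0'+\kappa\sup_{s\in[0,T]}\mathbb E\bigl[|y^{(m)}_s|\bigr]
$$
for a constant $C_0'$ independent of $m$. Exponentiating, using $e^{a+b+c}\le$ (products) and Doob's and Jensen's inequalities exactly as in the uniqueness proof (the chain of inequalities culminating in \eqref{jensen01}), one arrives at an inequality of the form
$$
\mathbb E\Bigl[\exp\bigl\{p\lambda\sup_{[T-h,T]}|Y^{(m)}|\bigr\}\Bigr]\le C\,\Bigl(\mathbb E\bigl[\exp\{q\lambda(|\xi|+\chi)\}\bigr]\Bigr)^{a}\,\mathbb E\Bigl[\exp\bigl\{q'\beta\|A_T-A_{T-h}\|_\infty\,p\lambda\sup_{[T-h,T]}|Y^{(m-1)}|\bigr\}\Bigr]^{b},
$$
where $\chi$ collects $\int_0^T\alpha_s\,dA_s$ (whose exponential moments are finite by (H3)(d)) and $q,q',a,b$ are explicit constants. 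By (H1') we may pick $h\in(0,T]$, depending only on $\beta,\kappa$, with $T=Nh$ and $q'\beta\|A_{ih}-A_{(i-1)h}\|_\infty<1$ on every subinterval; then the exponent on the $Y^{(m-1)}$ factor is strictly less than $1$, and a standard iteration in $m$ (bounding the recursion by its fixed point) yields $\sup_m\mathbb E[\exp\{p\lambda\sup_{[T-h,T]}|Y^{(m)}|\}]<\infty$. Stitching the $N$ subintervals together — using the value $Y^{(m)}_{T-h}$ as a new (uniformly exponentially integrable) terminal condition on $[T-2h,T-h]$ and repeating — gives the claimed bound on all of $[0,T]$.

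The main obstacle is bookkeeping the constants so that the "feedback" coefficient multiplying $\sup|Y^{(m-1)}|$ is genuinely $<1$ on each small block: the term $\beta|Y^{(m-1)}_s|$ enters the exponent of the conditional expectation through $\int_t^T e^{\beta A_s}\beta|Y^{(m-1)}_s|\,dA_s$, which after applying Doob/Hölder contributes a factor proportional to $\|A_T-A_{T-h}\|_\infty$ times the exponential moment of $\sup|Y^{(m-1)}|$ — and one must also absorb the extra factor $\kappa$ coming from the mean-reflection term $L_s$, exactly as in the transition from \eqref{eq_exp estimate} to \eqref{jensen01}. Choosing $h$ small enough to beat the combined constant $(8+16\kappa)\beta$ (or whatever explicit constant the computation produces) is what makes the contraction work; everything else is a routine repetition of the estimates already developed for the uniqueness proof, now made uniform in the iteration index $m$ by induction on $m$ with $Y^{(0)}\equiv0$ as the base case.
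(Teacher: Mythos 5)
Your proposal is correct and follows essentially the same route as the paper's proof: representation of $Y^{(m)}$ via Lemma \ref{representation}, the a priori exponential estimate for the frozen-driver BSDE (the paper uses the degenerate Corollary \ref{a priori esti 2}(i), which avoids the $e^{\beta A}$ weights, but your weighted version only changes constants), the Lipschitz bound on $L$ from Remark \ref{remark1}, Doob/Jensen/H\"older as in the uniqueness argument, and the short-time contraction from (H1') with iteration in $m$ and stitching over the $N$ subintervals.
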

\begin{lemma}
\label{lemma_limit estimation}
Assume that all conditions of Theorem \ref{thm1} are satisfied. Then, for any $p \geq 1$, we have
$$
\Pi(p):=\sup _{\theta \in(0,1)} \lim _{m \rightarrow \infty} \sup _{q \geq 1} \mathbb{E}\left[\exp \left\{p\lambda \sup _{s \in[0, T]} \delta_\theta \bar{Y}_s^{(m, q)}\right\}\right]<\infty,
$$
where we use the following notations
$$
\delta_\theta Y^{(m, q)}=\frac{ Y^{(m+q)}-\theta Y^{(m)}}{1-\theta}, \delta_\theta \widetilde{Y}^{(m, q)}=\frac{ Y^{(m)}-\theta Y^{(m+q)}}{1-\theta} \text { and } \delta_\theta \bar{Y}:=\left|\delta_\theta Y^{(m, q)}\right|+\left|\delta_\theta \widetilde{Y}^{(m, q)}\right|.
$$
\end{lemma}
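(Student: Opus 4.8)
The plan is to reprise the $\theta$-method from the uniqueness proof, but now along the Picard sequence $(Y^{(m)})_{m\ge0}$, with Lemma~\ref{lemma_uniform estimate} supplying all the ``data'' bounds uniformly in $m$ and $q$. As there, assume $f$ is convex in $u$ (the concave case being symmetric, cf. Remark~\ref{remcvx}), and first pass to the associated non-reflected BSDEs: since $Y^{(m)}$ solves the mean-reflected BSDE with the $Y$-free driver $f(s,Y^{(m-1)}_s,\cdot)$, Corollary~\ref{cor_1} together with the construction in the proof of Lemma~\ref{simple} gives $Y^{(m)}_t=y^{(m)}_t+\sup_{t\le s\le T}L_s(y^{(m)}_s)$, where $(y^{(m)},u^{(m)})\in\mathcal E\times\cap_{p\ge1}H^{2,p}_\nu$ solves $y^{(m)}_t=\xi+\int_t^T f(s,Y^{(m-1)}_s,u^{(m)}_s)\,dA_s-\int_t^T\!\!\int_E u^{(m)}_s(e)\,q(ds\,de)$. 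Applying Corollary~\ref{a priori esti 2} to this BSDE (whose driver obeys the growth bound of (H3)(c) with $|Y^{(m-1)}|$ in place of $|y|$) and combining with Lemma~\ref{lemma_uniform estimate} and (H3)(d), one obtains $\sup_m\mathbb E[\exp\{p\lambda\sup_{[0,T]}|y^{(m)}|\}]<\infty$ for every $p\ge1$; hence every quantity built from $\xi$, $\alpha$, $\sup_{[0,T]}|Y^{(m)}|$ and $\sup_{[0,T]}|y^{(m)}|$ has a finite exponential moment, uniformly in $m$ and $q$.

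For fixed $\theta\in(0,1)$, $m\ge1$, $q\ge1$, set $\delta_\theta y^{(m,q)}=\tfrac{y^{(m+q)}-\theta y^{(m)}}{1-\theta}$, $\delta_\theta u^{(m,q)}=\tfrac{u^{(m+q)}-\theta u^{(m)}}{1-\theta}$. Subtracting the two BSDEs, $\delta_\theta y^{(m,q)}$ solves a BSDE whose generator splits as in the uniqueness proof: a term bounded via (H3)(b) by $\tfrac{\beta}{1-\theta}|Y^{(m+q-1)}_s-Y^{(m-1)}_s|=\beta\,|\delta_\theta Y^{(m-1,q)}_s-Y^{(m-1)}_s|$, plus a term which convexity of $f$ in $u$ and (H3)(c) bound above by $\alpha_s+\beta|Y^{(m-1)}_s|+\tfrac1\lambda j_\lambda(s,\delta_\theta u^{(m,q)}_s)$. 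Then Corollary~\ref{a priori esti 2}(ii) (and the symmetric estimate for $\delta_\theta\widetilde y^{(m,q)}$, using $(\delta_\theta y^{(m,q)})^-\le(\delta_\theta\widetilde y^{(m,q)})^++|y^{(m)}|+|y^{(m+q)}|$) gives, for every $p\ge1$, $t\in[0,T]$, a conditional estimate of the form $\exp\{p\lambda(\delta_\theta y^{(m,q)}_t)^+\}\le\mathbb E_t[\exp\{p\lambda(|\xi|+\chi^{(m,q)}+\beta(A_T-A_t)\sup_{[t,T]}\delta_\theta\bar Y^{(m-1,q)})\}]$, where $\chi^{(m,q)}$ gathers $\int_0^T\alpha\,dA$ and $\sup_{[0,T]}(|Y^{(m-1)}|+|Y^{(m+q-1)}|)$. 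The key structural feature is that the only $\theta$-difference on the right-hand side is the one of the \emph{previous} level, $\delta_\theta\bar Y^{(m-1,q)}$.

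Transferring back to $Y$ via the representation and the $\kappa$-Lipschitz property of $L_t$ (Remark~\ref{remark1}), one gets $\delta_\theta\bar Y^{(m,q)}_t\le\delta_\theta\bar y^{(m,q)}_t+\kappa\sup_{[t,T]}\mathbb E[\delta_\theta\bar y^{(m,q)}_s]+C$, with $C$ depending only on $\sup_s|L_s(0)|$ and the uniform bounds above. Feeding the conditional estimates into Doob's and H\"older's inequalities exactly as in the chain (\ref{eq_exp estimate})--(\ref{jensen01}) yields, for each subinterval $[r,T]$, $\mathbb E[\exp\{p\lambda\sup_{[r,T]}\delta_\theta\bar Y^{(m,q)}\}]\le C_p\big(\mathbb E[\exp\{C_p'p\lambda(|\xi|+\chi^{(m,q)}+C)\}]\big)^{1/2}\big(\mathbb E[\exp\{C_p''p\lambda\beta\|A_T-A_r\|_\infty\sup_{[r,T]}\delta_\theta\bar Y^{(m-1,q)}\}]\big)^{1/2}$, with constants independent of $m,q,\theta$. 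By (H1') choose $h\in(0,T]$ with $T=Nh$ so small that $C_p''\beta\|A_{ih}-A_{(i-1)h}\|_\infty<1$ for all $i$; since $\delta_\theta\bar Y\ge0$ the last factor is then $\le\big(\mathbb E[\exp\{p\lambda\sup_{[T-h,T]}\delta_\theta\bar Y^{(m-1,q)}\}]\big)^{1/2}$, while the first factor is uniformly bounded in $m,q$. Writing $\Theta_m:=\sup_{q\ge1}\mathbb E[\exp\{p\lambda\sup_{[T-h,T]}\delta_\theta\bar Y^{(m,q)}\}]$, this reads $\Theta_m\le\overline C_p\,\Theta_{m-1}^{1/2}$ with $\overline C_p$ independent of $m,q,\theta$. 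Since $Y^{(0)}\equiv0$ gives $\delta_\theta\bar Y^{(0,q)}=\tfrac{1+\theta}{1-\theta}|Y^{(q)}|$, Lemma~\ref{lemma_uniform estimate} makes $\Theta_0<\infty$ for each fixed $\theta$; iterating the relation yields $\limsup_{m\to\infty}\Theta_m\le\overline C_p^{\,2}<\infty$, uniformly in $\theta$. One then propagates backward over the $N$ blocks: on $[T-2h,T-h]$ the same computation with ``terminal value'' $\delta_\theta\bar Y^{(m,q)}_{T-h}$ (whose exponential moments are already controlled, again uniformly in the $\limsup_m\sup_q$) gives the analogous bound there, and after $N$ steps a summation over blocks gives $\sup_\theta\limsup_m\sup_q\mathbb E[\exp\{p\lambda\sup_{[0,T]}\delta_\theta\bar Y^{(m,q)}\}]<\infty$, which is $\Pi(p)<\infty$.

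The main obstacle is the presence of the previous-level term $\delta_\theta\bar Y^{(m-1,q)}$ on the right-hand side: one must localize on blocks over which $A$ oscillates little --- this is precisely where the stronger assumption (H1'), not just (H1), is used --- so that the coefficient of that term is strictly below $1$ and the level-to-level inequality becomes a genuine contraction in $m$; and one must check that \emph{every} constant entering this contraction, in particular $\overline C_p$, depends only on $\beta,\kappa,\lambda,p$, the $A$-oscillation modulus $\rho$ and the uniform estimates of Lemma~\ref{lemma_uniform estimate}, but never on $\theta$, so that the outer $\sup_\theta$ stays finite.
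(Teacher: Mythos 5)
Your proposal is correct and follows essentially the same route as the paper's proof: the $\theta$-difference of the auxiliary non-reflected BSDEs $y^{(m)}$, the one-sided estimate from Corollary \ref{a priori esti 2}(ii), transfer to $Y^{(m)}$ via the representation and the $\kappa$-Lipschitz property of $L_t$, Doob/H\"older/Jensen, and the small-block decomposition from (H1') to make the previous-level term appear with a coefficient below one. The only cosmetic difference is that you close the induction with the contraction $\Theta_m\le \overline C_p\,\Theta_{m-1}^{1/2}$ starting from $Y^{(0)}\equiv 0$, whereas the paper iterates the small exponent $(8+16\kappa)\beta\|A_T-A_{T-h}\|_\infty$ down to level $1$ so that the $\theta$-dependent factor is raised to a power tending to zero; both yield the same $\theta$-uniform limit bound.
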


The proofs of Lemmas \ref{lemma_uniform estimate} and \ref{lemma_limit estimation} can be found in Appendix \ref{appendix B}.

We conclude this section by establishing the existence of the mean-reflected BSDE (\ref{eq standard}) as stated in Theorem \ref{thm1}.
\begin{proof}[Proof of existence in Theorem \ref{thm1}]
Note that for any integer $p \geq 1$ and for any $\theta \in(0,1)$,
$$
\begin{aligned}
\limsup _{m \rightarrow \infty} \sup _{q \geq 1} \mathbb{E}\left[\sup _{t \in[0, T]}\left|Y_t^{(m+q)}-Y_t^{(m)}\right|^p\right] \le& \limsup _{m \rightarrow \infty} \sup _{q \geq 1} \mathbb{E}\left[\sup _{t \in[0, T]}2^{p-1}\left(\left|Y_t^{(m+q)}-\theta Y_t^{(m)}\right|^p+(1-\theta)^p\left|Y_t^{(m)}\right|^p\right)\right] \\
\leq& 2^{p-1}(1-\theta)^p\left(\frac{\Pi(1) p!}{\lambda^p}+\sup _{m \geq 1} \mathbb{E}\left[\sup _{t \in[0, T]}\left|Y_t^{(m)}\right|^p\right]\right),
\end{aligned}
$$
where we use the fact that for $x>0$, $\frac{x^p}{p!}\le e^x$, and let $x=\frac{\lambda}{1-\theta}\sup_{s\in [0,T]}\left|Y_t^{(m+q)}-\theta Y_t^{(m)}\right|$
in the last inequality.
Letting $\theta$ go to $1$, thanks to Lemmas \ref{lemma_uniform estimate} and \ref{lemma_limit estimation}, it turns out that
$$
\lim _{m \rightarrow \infty} \sup _{q \geq 1} \mathbb{E}\left[\sup _{t \in[0, T]}\left|Y_t^{(m+q)}-Y_t^{(m)}\right|^p\right]=0, \ \forall p \geq 1.
$$

Therefore, there exists $Y$ such that
\begin{equation}
\label{lim Y}
\lim _{m \rightarrow \infty} \mathbb{E}\left[\sup _{t \in[0, T]}\left|Y_t^{(m)}-Y_t\right|^p\right]=0, \ \forall p \geq 1.
\end{equation}
In fact, $Y\in\mathcal E$ thanks to (\ref{Y_space}) and Fatou’s lemma.

Next, we consider the following BSDE,
$$
y_t=\xi+\int_{t}^Tf(s,Y_s,u_s)dA_s-\int_t^T\int_E u_s(e)q(dsde),
$$
which is uniquely solvable with a solution $(y, u) \in \mathscr{E} \times\cap_{p\ge 1} H_{\nu}^{2,p}$.

Define
$$
K_t=\sup _{0 \leq s \leq T} L_s\left(y_s\right)-\sup _{t \leq s \leq T} L_s\left(y_s\right),
$$
and
\begin{equation}
\label{tilde Y}
\tilde{Y}_t=y_t+\sup _{t \leq s \leq T} L_s\left(y_s\right).
\end{equation}
It follows from Lemma \ref{representation} that $(\tilde{Y}, u, K)$ is a deterministic flat solution to the following BSDE with mean reflection:
$$
\left\{\begin{array}{l}
\hat{Y}_t=\xi+\int_t^T f\left(s, Y_s, \hat{U}_s\right) d A_s-\int_t^T \int_E\hat{U}_sq(dsde)+\left(\hat{K}_T-\hat{K}_t\right); \\
\mathbb{E}\left[\ell\left(t, \hat{Y}_t\right)\right] \geq 0, \quad \forall t \in[0, T];\\
\int_0^T \mathbb{E}\left[\ell\left(t, \hat Y_{t^-}\right)\right] d \hat K_t=0.
\end{array}\right.
$$

It remains to show $\tilde {Y}=Y$, a.s., with $Y$ being the limit of $Y^{(m)}$ found in (\ref{lim Y}) and $\tilde{Y}$ defined as (\ref{tilde Y}). To obtain the desired equality, i.e., $Y=\tilde{Y}$, we claim that
\begin{equation}
\label{tilde Y to Y}
\mathbb{E}\left[\sup _{0 \leq s \leq T}\left|\tilde{Y}_s-Y_s\right|\right]=0.
\end{equation}
Indeed, for each $m \in \mathbb{N}$,
$$
\begin{aligned}
{\mathbb{E}} & {\left[\sup _{0 \leq s \leq T}\left|\tilde{Y}_s-Y_s\right|\right] } \\
& \leq {\mathbb{E}}\left[\sup _{0 \leq s \leq T}\left|\tilde{Y}_s-Y_s^{(m)}\right|\right]+{\mathbb{E}}\left[\sup _{0 \leq s \leq T}\left|Y_s^{(m)}-Y_s\right|\right] \\
& \leq {\mathbb{E}}\left[\sup _{0 \leq s \leq T}\left|y_s-y_s^{(m)}\right|\right]+{\mathbb{E}}\left[\sup _{0 \leq s \leq T}\left|L_s\left(y_s\right)-L_s\left(y_s^{(m)}\right)\right|\right]+{\mathbb{E}}\left[\sup _{0 \leq s \leq T}\left|Y_s^{(m)}-Y_s\right|\right] \\
& \leq(1+\kappa) \mathbb{E}\left[\sup _{0 \leq s \leq T}\left|y_s-y_s^{(m)}\right|\right]+{\mathbb{E}}\left[\sup _{0 \leq s \leq T}\left|Y_s^{(m)}-Y_s\right|\right].
\end{aligned}
$$
The last inequality follows from assumption (H6). Given this inequality and (\ref{lim Y}), establishing (\ref{tilde Y to Y}) necessitates demonstrating that
\begin{equation}
\label{y lim}
\lim _{m \rightarrow \infty} {\mathbb{E}}\left[\sup _{0 \leq s \leq T}\left|y_s-y_s^{(m)}\right|\right]=0,
\end{equation}
where $y_t^{(m)}$ is the solution to the following exponential growth BSDE
\begin{equation}
y_t^{(m)}=\xi+\int_t^T f\left(s, Y_s^{(m-1)}, u_s^{(m)}\right) d A_s-\int_t^T\int_E u_s^{(m)}q(dsde).
\end{equation}

For each $\theta \in(0,1)$, we similarly set
$$
\delta_\theta^{(m)} l:=\frac{l-\theta l^{(m)}}{1-\theta}, \quad \delta_\theta^{(m)} \tilde{l}:=\frac{l^{(m)}-\theta l}{1-\theta} \quad \text { and } \quad \delta_\theta^{(m)} \bar{l}=\left|\delta_\theta^{(m)} l\right|+\left|\delta_\theta^{(m)} \tilde{l}\right|,
$$
for $l=y,\ Y$ and $u$.
Consider the following BSDE,
$$
\delta_\theta y_t^{(m)}=\xi+\int_t^T\left(\delta_\theta f^{(m)}\left(s, \delta_\theta u_s^{(m)}\right)+\delta_\theta f_0^{(m)}(s)\right) d A_s-\int_t^T \int_E\delta_\theta u_s^{(m)}(e)q(dsde),
$$
where the generator is given by
$$
\begin{aligned}
& \delta_\theta f_0^{(m)}(t)=\frac{1}{1-\theta}\left(f\left(t, Y_t, u_t\right)-f\left(t, Y_t^{(m-1)}, u_t\right)\right), \\
& \delta_\theta f^{(m)}(t, u)=\frac{1}{1-\theta}\left(-\theta f\left(t, Y_t^{(m-1)}, u_t^{(m)}\right)+f\left(t, Y_t^{(m-1)},(1-\theta) u+\theta u_t^{(m)}\right)\right).
\end{aligned}
$$
From assumptions (H3)(c) and (H3)(e), we obtain
$$
\begin{aligned}
& \delta_\theta f_0^{(m)}(t) \leq \beta\left(\left|Y_t^{(m-1)}\right|+\left|\delta_\theta Y_t^{(m-1)}\right|\right), \\
& \delta_\theta f^{(m)}(t, u) \leq f\left(t, Y_t^{(m-1)}, u\right) \leq \alpha_t+\beta\left(\left|Y_t^{(m-1)}\right|\right)+\frac{1}{\lambda}j_\lambda(t,u).
\end{aligned}
$$
For any $m\geq 1$, denote
$$
\begin{aligned}
& \zeta^{(m)}=|\xi|+\int_0^T \alpha_s d A_s+\beta A_T\left(\sup _{s \in[0, T]}\left|Y_s^{(m-1)}\right|+\sup _{s \in[0, T]}\left|Y_s\right|\right), \\
& \chi^{(m)}=\int_0^T \alpha_s d A_s+2 \beta A_T\left(\sup _{s \in[0, T]}\left|Y_s\right|+\sup _{s \in[0, T]}\left|Y_s^{(m-1)}\right|\right).
\end{aligned}
$$
Assertion (ii) of Lemma \ref{submartingle property} yields, for any $p \geq 1$,
$$
\exp \left\{p\lambda\left(\delta_\theta y_t^{(m)}\right)^{+}\right\} \leq \mathbb{E}_t \exp \left\{
p\lambda\left(|\xi|+\chi^{(m)}+\beta(A_T-A_t)\sup _{s \in[t, T]}\left|\delta_\theta Y_s^{(m-1)}\right|\right)\right\},
$$
and in the same manner, it also holds that
$$
\exp \left\{p\lambda\left(\delta_\theta \widetilde{y}_t^{(m)}\right)^{+}\right\}\leq \mathbb{E}_t\exp \left\{p\lambda\left(|\xi|+\chi^{(m)}+\beta(A_T-A_t)\sup _{s \in[t, T]}\left|\delta_\theta \widetilde{Y}_s^{(m-1)}\right|\right)\right\}.
$$

Thanks to the fact that
$$
\left(\delta_\theta y^{(m)}\right)^{-} \leq\left(\delta_\theta \widetilde{y}^{(m)}\right)^{+}+2\left|y\right| \text { and }\left(\delta_\theta \widetilde{y}^{(m)}\right)^{-} \leq\left(\delta_\theta y^{(m)}\right)^{+}+2\left|y^{(m)}\right|,
$$
we derive, relying on H\"older’s inequality, that
$$
\begin{aligned}
& \exp \left\{p\lambda\left|\delta_\theta y_t^{(m)}\right|\right\} \vee \exp \left\{p\lambda\left|\delta_\theta \widetilde{y}_t^{(m)}\right|\right\} \\
& \leq \exp \left\{p\lambda\left(\left(\delta_\theta y_t^{(m)}\right)^{+}+\left(\delta_\theta \widetilde{y}_t^{(m)}\right)^{+}+2\left|y_t^{(m)}\right|+2\left|y_t\right|\right)\right\} \\
& \leq \mathbb{E}_t\left[\exp \left\{p\lambda\left(|\xi|+\chi^{(m)}+\beta(A_T-A_t)\sup _{s \in[t, T]} \delta_\theta \bar{Y}_s^{(m-1)}\right)\right\}\right]^2 \\
& \quad \times \exp \left\{2 p\lambda\left(\left|y_t^{(m)}\right|+\left|y_t\right|\right)\right\} \\
& \leq \mathbb{E}_t\left[\exp \left\{p\lambda\left(|\xi|+\chi^{(m)}+\beta(A_T-A_t)\left(\sup _{s \in[t, T]} \delta_\theta \bar{Y}_s^{(m-1)}\right)\right)\right\}\right]^2 \\
& \quad \times \mathbb{E}_t\left[\exp \left\{4 p \lambda\zeta^{(m)}\right\}\right].
\end{aligned}
$$
Making use of Doob’s maximal inequality and H\"older’s inequality, we obtain that for all $p>1$ and $t \in [0, T]$,
$$
\begin{aligned}
& \mathbb{E}\left[\exp \left\{p\lambda \sup _{s \in[0, T]} \delta_\theta \bar{y}_s^{(m)}\right\}\right] \\
& \leq 4 \mathbb{E}\left[\exp \left\{8 p\lambda\left(|\xi|+\chi^{(m)}+\beta A_T\left(\sup _{s \in[0, T]} \delta_\theta \bar{Y}_s^{(m-1)}\right)\right)\right\}\right]^{\frac{1}{2}} \mathbb{E}\left[\exp \left\{16 p \lambda\zeta^{(m)}\right\}\right]^{\frac{1}{2}}.
\end{aligned}
$$
Combining Lemmas \ref{lemma_uniform estimate} and \ref{lemma_limit estimation} and H\"older’s inequality, we obtain
$$
\limsup_{m\to\infty}\mathbb{E}\left[\exp \left\{p\lambda \sup _{s \in[0, T]} \delta_\theta \bar{y}_s^{(m)}\right\}\right]<A_p,
$$
where $A_p$ is a constant that depends on $p$ and is free of $\theta$.

Note that $y^{(m)}-y=(1-\theta)\left(\delta_\theta^{(m)} \tilde{y}-y\right)$. It follows that
$$
\mathbb{E}\left[\sup _{t \in[0, T]}\left|y_t-y_t^{(m)}\right|\right] \leq(1-\theta)\left(\frac{1}{p\lambda} \sup _{\theta \in(0,1)} {\mathbb{E}}\left[\exp \left\{p\lambda \sup _{s \in[0, T]} \delta_\theta^{(m)} \bar{y}_s\right\}\right]+ \mathbb{E}\left[\sup _{t \in[0, T]}\left|y_t\right|\right]\right).
$$
First, let $m \rightarrow \infty$, and then let $\theta \rightarrow 1$, it turns out that
$$
\lim _{m \rightarrow \infty} {\mathbb{E}}\left[\sup _{0 \leq s \leq T}\left|y_s-y_s^{(m)}\right|\right]=0.
$$
The proof is complete.
\end{proof}

\begin{remark}
The well-posedness of (\ref{eq standard}) can also be generalized to (\ref{eq general}) through a similar argument as in Remark \ref{remark general}.
\begin{equation}
\label{eq general}
\left\{\begin{array}{l}
Y_t=\xi+\int_t^T f\left(s, Y_s, U_s\right) d A_s+\int_t^Tg(s,Y_s,Z_s)ds-\int_t^TZ_sdW_s-\int_t^T \int_E U_s(e)q(dsde)+K_T-K_t, \quad 0 \leq t \leq T, \\
\mathbb{E}\left[\ell\left(t, Y_t\right)\right] \geq 0, \quad \forall t \in[0, T] \text { and } \int_0^T \mathbb{E}\left[\ell\left(t, Y_{t^-}\right)\right] d K_t=0.
\end{array}\right.
\end{equation}
For each $p\ge 1$, there exists a unique solution $(Y,Z,U,K)\in\mathcal E\times \mathbb H^p\times H_{\nu}^{2,p}\times \mathcal A_D$ to (\ref{eq general}).
\end{remark}

\begin{appendices}
\section{Proofs in section \ref{section non reflected}}
\label{appexdix A}
\begin{proof}[Proof of Lemma \ref{submartingle property}]
Note that $y$ is c\`adl\`ag and $A$ is continuous.

(i) Applying It\^o’s formula to $|y|$,
\begin{equation}
\begin{aligned}
d|y_t|&=sign(y_{t^-})dy_t+dL_t^y+\int_E\left(|y_{t^-}+u_t(e)|-|y_{t^-}|-sign(y_{t^-})u_t(e)\right)p(dtde)\\
&=-sign(y_{t^-})f(s,y_{t},u_t)dA_t+dL_t^y+\int_E\left(|y_t+u_t(e)|-|y_t|-sign(y_{t^-})u_t(e)\right)\phi_t(de)dA_t\\
&\quad+\int_E\left(|y_{t^-}+u_t(e)|-|y_{t^-}|\right)q(dtde).
\end{aligned}
\end{equation}
Define $G_t=e^{\beta A_t}\lambda |y_t|+\int_{0}^t\lambda e^{\beta A_s}\alpha_sdA_s$. Then applying It\^o’s formula to $G_t$, we obtain
$$
\begin{aligned}
dG_t&=e^{\beta A_t}\left[\lambda\beta |y_{t}|dA_t+\lambda d|y_t|+\lambda\alpha_tdA_t\right]\\
&=e^{\beta A_t}\lambda\left(\left(-sign(y_{t^-})f(t,y_{t},u_t)+\alpha_t+\beta |y_t|\right)dA_t+dL_t^y+\int_E\left(|y_t+u_t(e)|-|y_t|-sign(y_{t})u_t(e)\right)\phi_t(de)dA_t\right.\\
&\left.\quad+\int_E\left(|y_{t^-}+u_t(e)|-|y_{t^-}|\right)q(dtde)\right)\\
&\ge -j_\lambda(sign(y_{t^-})e^{\beta A_t}u_t)dA_t+e^{\beta A_t}\lambda\left(dL_t^y+\int_E\left(|y_{t}+u_t(e)|-|y_{t}|-sign(y_{t})u_t(e)\right)\phi_t(de)dA_t\right.\\
&\left.\quad+\int_E\left(|y_{t^-}+u_t(e)|-|y_{t^-}|\right)q(dtde)\right)\\
&\ge e^{\beta A_t}\lambda\int_E\left(|y_{t^-}+u_t(e)|-|y_{t^-}|\right)q(dtde)-j_{\lambda}\left(e^{\beta A_t}\left(|y_{t^-}+u_t(e)|-|y_{t^-}|\right)\right)dA_t,
\end{aligned}
$$
where, for the first inequality, we use the observation for any $k\ge 1$,
$$
j_\lambda(ku)\ge kj_{\lambda}(u).
$$
Moreover, the last inequality follows from the fact $|y+u|-|y| \geq sign(y) u$.

Finally, applying It\^o’s formula to $e^{G_t}$, and considering the integrability condition on $y$ along with (\ref{integrability_condition}), we deduce that $e^{G_t}$ is a submartingale. This implies
$$
\exp\left\{\lambda |y_t|\right\}\le\exp\left\{e^{\beta A_t}\lambda |y_t|\right\}\le \mathbb E_t\left[ \exp\left\{e^{\beta A_T}\lambda|\xi|+\int_t^T\lambda e^{\beta A_t}\alpha_s dA_s\right\}\right].
$$
Hence, for each $p\ge 1$, by Jensen’s inequality, it holds that
$$
\exp\left\{p\lambda |y_t|\right\}\le \mathbb E_t\left[ \exp\left\{pe^{\beta A_T}\lambda|\xi|+\int_t^Tp\lambda e^{\beta A_t}\alpha_s dA_s\right\}\right].
$$

(ii)
Similar to assertion (i), applying It\^o’s formula to $y^+$,
\begin{equation}
\begin{aligned}
dy_t^+&=1_{y_{t^-}>0}dy_t+\frac{1}{2}dL_t^y+\int_E\left((y_{t^-}+u_t(e))^+-y_{t^-}^+-1_{y_{t^-}>0}u_t(e)\right)p(dtde)\\
&=-1_{y_{t^-}>0}f(s,y_{t},u_t)dA_t+\frac{1}{2}dL_t^y+\int_E\left((y_{t}+u_t(e))^+-y_{t}^+-1_{y_{t}>0}u_t(e)\right)\phi_t(de)dA_t\\
&\quad+\int_E\left((y_{t^-}+u_t(e))^+-y_{t^-}^+\right)q(dtde).
\end{aligned}
\end{equation}
Define $G_t=e^{\beta A_t}\lambda y_t^++\int_{0}^t\lambda e^{\beta A_s}\alpha_sdA_s$, and apply It\^o’s formula to $e^{G_t}$. Then, by following the proof of assertion (i) from line to line, we conclude the proof of Lemma \ref{submartingle property}.
\end{proof}

\begin{proof}[Proof of Proposition \ref{priori estimate on U and K}]

The proof is inspired by \cite[Proposition 4.5]{karoui2016quadratic}. Let $\bar G_t=Y_t+\int_0^t\alpha_sdA_s+\int_0^t\beta|Y_s|dA_s$. We first claim that $e^{\lambda\bar G}$ is a positive local submartingale. Indeed, by applying It\^o’s formula to $e^{\lambda\bar G_t}$, we obtain
\begin{equation}
\label{ebarG}
\begin{aligned}
de^{\lambda\bar G_t}&=e^{\lambda\bar G_{t^-}}\left(\lambda d \bar G_t+\int_E\left(e^{\lambda U_t(e)}-\lambda U_t(e)-1\right)p(dtde)\right)\\
&=e^{\lambda\bar G_{t^-}}\left(\left[\lambda\beta |Y_{t}| dA_t+\lambda d Y_t+\lambda\alpha_tdA_t\right]+\int_E\left(e^{\lambda U_t(e)}-\lambda U_t(e)-1\right)p(dtde)\right)\\
&=e^{\lambda\bar G_{t^-}}\left(\left[\lambda\beta |Y_{t}| dA_t+\lambda \left(-f(t,Y_t,U_t)dA_t+\int_EU_t(e)q(dtde)\right)+\lambda\alpha_tdA_t\right]+\int_E\left(e^{\lambda U_t(e)}-\lambda U_t(e)-1\right)p(dtde)\right)\\
&=e^{\lambda\bar G_{t^-}}\left(\left[\lambda\beta |Y_{t}|-\lambda f(t,Y_t,U_t)+\lambda\alpha_t+j_1(\lambda U_t)\right]dA_t+\int_E\left(e^{\lambda U_t(e)}-1\right)q(dtde)\right)\\
&\ge e^{\lambda\bar G_{t^-}}\int_E\left(e^{\lambda U_t(e)}-1\right)q(dtde).
\end{aligned}
\end{equation}
We plug in the growth condition of $f$ in the last inequality.

Furthermore, from (\ref{ebarG}), we observe that $e^{\lambda\bar G}$ has the following decomposition:
\begin{equation}
\label{ebarGdecom}
e^{\lambda\bar G_t}=e^{\lambda\bar G_0}\mathcal E(\bar M_t)\exp(\bar A_t),
\end{equation}
where,
$$
\bar M_t=\int_0^t\int_E\left(e^{\lambda U_s(e)}-1\right)q(dsde),
$$
$\mathcal E(\bar M_t)$ is the Dol\'eans-Dade exponential of $\bar M$,
and $\bar A$ is a non-decreasing process with $\bar A_0=0$ in the form
$$
\bar A_t=\int_0^t\left(\lambda \left(-f(t,Y_s,U_s)+\alpha_s+\beta |Y_s|\right)+j_1(\lambda U_s(e))\right)dA_s.
$$
Then,
\begin{equation}
\label{compact bar G}
d e^{\lambda\bar G_t}=e^{\lambda\bar G_{t^-}}(d\bar A_t+d\bar M_t).
\end{equation}
Note that $\bar G\in\mathcal E$, then, considering (\ref{ebarGdecom}),
$\mathcal E(\bar M_t)$ is a true martingale.

Now, we estimate the quadratic variation of $\bar{M}$ :
$$
d[\bar{M}]_t=\int_E\left(e^{\lambda U_t(e)}-1\right)^2 \phi_t\left(de\right) d A_t+\int_E\left(e^{ \lambda U_t(e)}-1\right)^2 q(dtde).
$$
Obviously, by (\ref{compact bar G}),
$$
d[e^{\lambda\bar{G}}]_t=e^{2 \lambda\bar{G}_{t-}} d[\bar{M}]_t.
$$
We can also determine predictable quadratic variations through direct calculation,
$$
d\langle \bar M\rangle_t=\int_E\left(e^{ \lambda U_t(e)}-1\right)^2 \phi_t\left(de\right) d A_t,
$$
and
$$
d\langle e^{\lambda\bar G}\rangle_t= e^{2 \lambda\bar{G}_{t-}} d\langle\bar{M}\rangle_t.
$$

Then, for any stopping time $\sigma \leq T$, it holds that
\begin{equation}
\label{M-qv}
\langle\bar{M}\rangle_{T}-\langle\bar{M}\rangle_\sigma =\int_{\sigma}^T \frac{d\langle e^{\lambda\bar{G}}\rangle_t}{e^{2 \lambda\bar{G}_t-}}
\le \sup _{\sigma \leq t \leq T}\left(e^{-2 \lambda\bar{G}_t}\right)\left(\langle e^{\lambda\bar{G}}\rangle_{T}-\langle e^{\lambda\bar{G}}\rangle_\sigma\right).
\end{equation}
Next, we obtain an a priori estimate of $\langle e^{\lambda\bar{G}}\rangle_T-\langle e^{\lambda\bar{G}}\rangle_\sigma$ via It\^o’s formula,
\begin{equation}
\label{ito}
d e^{2\lambda\bar G_t}=2 e^{2\lambda \bar{G}_t-}\left(d \bar{M}_t+d \bar{A}_t\right)+d[e^{\lambda\bar{G}}]_t\ge 2 e^{2\lambda \bar{G}_t-}d \bar{M}_t+e^{2\lambda \bar{G}_t-}\int_E\left(e^{\lambda U_t(e)}-1\right)^2 q(dtde)+d\langle e^{\lambda\bar{G}}\rangle_t.
\end{equation}
Taking the conditional expectation on both sides of (\ref{ito}), we obtain
$$
\mathbb{E}\left[\langle e^{\lambda\bar{G}}\rangle_T-\langle e^{\lambda\bar{G}}\rangle_\sigma \mid \mathcal F_\sigma\right]\leqslant \mathbb{E}\left[e^{2\lambda \bar{G}_T}-e^{2 \lambda\bar{G}_\sigma} \mid \mathcal F_\sigma\right]
\leq \mathbb{E}\left[e^{2 \lambda\bar{G}_T} 1_{\sigma<T} \mid \mathcal F_\sigma\right],
$$
where we assume without loss of generality that $\bar M$ and $\int_0^{\cdot}e^{2\lambda \bar{G}_t-}\int_E\left(e^{\lambda U_t(e)}-1\right)^2 q(dtde)$ are martingales; otherwise, one can take advantage of standard localization and monotone convergence arguments.
Then, employing the Garcia–Neveu Lemma, see for example \cite[Lemma 4.3]{Barrieu_2013}, we find that for each $p\ge 1$,
$$
\mathbb{E}\left[\left(\langle e^{\lambda\bar{G}}\rangle_T\right)^p\right] \leqslant p^p \mathbb{E}\left[e^{2 p\lambda \bar{G}_T}\right].
$$
Then, by (\ref{M-qv}),
\begin{equation}
\label{bar M}
\begin{aligned}
\mathbb{E}\left[\left(\langle \bar{M}\rangle_T\right)^p\right] & \leqslant \mathbb{E}\left[\sup _{t \leqslant T}\left(e^{-2 p\lambda \bar{G}_t}\right)\left(\langle e^{\lambda\bar G}\rangle_T\right)^p\right] \\
& \leqslant\left(\mathbb{E}[\sup _{t \leq T} e^{-4 p\lambda\bar{G}_t}]\right)^{\frac{1}{2}}\left(\mathbb{E}\left[\left(\langle e^{\lambda\bar{G}}\rangle_T\right)^{2 p}\right]\right)^{\frac{1}{2}} \\
& \leqslant\left(\mathbb{E}\left[\sup _{t \leq T} e^{-4 p\lambda \bar{G} _t}\right]\right)^{\frac{1}{2}} \cdot(2 p)^p\left(\mathbb{E}\left[e^{4 p \lambda\bar{G}_T}\right]\right)^{\frac{1}{2}} \\
& \leqslant(2 p)^p \mathbb{E}\left[\sup _{t \leq T} e^{4 p\lambda\widetilde{G_t}}\right]\\
&\le C_p\mathbb E\left[e^{8p\lambda(1+\beta\|A_T\|_\infty)Y_*}\right],
\end{aligned}
\end{equation}
where
$
\widetilde{G_t}=|Y_t|+\int_0^t\alpha_sdA_s+\int_0^t\beta|Y_s|dA_s,
$
and $C_p$ is a positive constant that depends on $p$.

Similarly, let $\underline G_t=-Y_t+\int_0^t\alpha_sdA_s+\int_0^t\beta|Y_s|dA_s$. We can estimate the quadratic variation of
$$
\underline M_t=\int_0^t\int_E\left(e^{-\lambda U_s(e)}-1\right)q(dsde).
$$
Following the abovementioned proof step by step, we obtain
\begin{equation}
\label{underline M}
\mathbb{E}\left[\left(\langle \underline{M}\rangle_T\right)^p\right]
\le C_p\mathbb E\left[e^{8p\lambda(1+\beta\|A_T\|_\infty)Y_*}\right].
\end{equation}

Thus, combining (\ref{bar M}) and (\ref{underline M}), we obtain
\begin{equation}
\label{e|u|}
\mathbb E\left[\left(\int_0^T\int_E\left(e^{ \lambda |U_t(e)|}-1\right)^2 \phi_t\left(de\right) d A_t\right)^p\right]\le C_p\mathbb E\left[e^{8p\lambda (1+\beta\|A_T\|_\infty)Y_*}\right].
\end{equation}

Moreover, for each $q\ge 1$, repeating the procedure above for processes $e^{q\lambda\bar G}$ and $e^{q\lambda\underline G}$, we can obtain the following stronger estimates: for each $p,q\ge 1$,
\begin{equation}
\label{e|u|q}
\mathbb E\left[\left(\int_0^T\int_E\left(e^{q \lambda |U_t(e)|}-1\right)^2 \phi_t\left(de\right) d A_t\right)^p\right]\le  C_p\mathbb E\left[e^{8pq\lambda (1+\beta\|A_T\|_\infty)Y_*}\right].
\end{equation}
In fact, we only need to note that according to Definition \ref{def solution}, for each $p\ge 1$, the processes
$$
\int_0^\cdot\int_E\left(e^{p\lambda U_s(e)}-1\right)q(dsde)
$$
and
$$
\int_0^\cdot\int_E\left(e^{-p\lambda U_s(e)}-1\right)q(dsde)
$$
are local martingales and can be assumed as martingales without loss of generality in the proof of (\ref{e|u|q}).
The proof of (\ref{e|u|q}) is similar to that of (\ref{e|u|}) and thus is omitted.

Finally, by Jensen’s inequality,
$$
\begin{aligned}
\mathbb E\left[\left(\int_0^T\int_E|U_t(e)|^2 \phi_t\left(de\right) d A_t\right)^{p/2}\right]\le \left(\mathbb E\left[\left(\int_0^T\int_E|U_t(e)|^2 \phi_t\left(de\right) d A_t\right)^{p}\right]\right)^{1/2}\\
\le C_p\left(\mathbb E\left[\left(\int_0^T\int_E\left(e^{ \lambda |U_t(e)|}-1\right)^2 \phi_t\left(de\right) d A_t\right)^p\right]\right)^{1/2}\le C_p\mathbb E\left[e^{8p\lambda (1+\beta\|A_T\|_\infty)Y_*}\right],
\end{aligned}
$$
where we drop the power $\frac{1}{2}$ in the last inequality because the expectation term is greater than 1. The constant $C_p$ may vary.

\end{proof}

\section{Proofs in section \ref{section mean reflected}}
\label{appendix B}

\begin{proof}[Proof of Lemma \ref{lemma_uniform estimate}]
Referring to the representation in Lemma \ref{representation}, we find for any $m \geq 1$,
\begin{equation}
\label{representation_Ym}
Y_t^{(m)}:=y_t^{(m)}+\sup _{t \leq s \leq T} L_s\left(y_s^{(m)}\right), \quad \forall t \in[0, T],
\end{equation}
where $y_t^{(m)}$ is the solution to the following exponential growth BSDE:
\begin{equation}
y_t^{(m)}=\xi+\int_t^T f\left(s, Y_s^{(m-1)}, u_s^{(m)}\right) d A_s-\int_t^T\int_E u_s^{(m)}q(dsde).
\end{equation}

Applying assertion (i) of Corollary \ref{a priori esti 2} yields for any $t \in[0, T]$,
$$
\exp \left\{\lambda\left|y_t^{(m)}\right|\right\} \leq \mathbb{E}_t \exp \left\{\lambda\left(|\xi|+\int_0^T \alpha_s d A_s+\beta(A_T-A_t)\sup _{s \in[t, T]}\left|Y_s^{(m-1)}\right|\right)\right\}.
$$
Doob’s maximal inequality implies that for each $m \geq 1, \ p \geq 2$ and $t \in[0, T]$,
$$
\begin{aligned}
& \mathbb{E}\left[\exp \left\{p\lambda\sup _{s \in[t, T]}\left|y_s^{(m)}\right|\right\}\right] \\
& \quad \leq 4 \mathbb{E}\left[\exp \left\{p\lambda \left(|\xi|+\int_0^T \alpha_s d A_s+\beta(A_T-A_t)\sup _{s \in[t, T]}\left|Y_s^{(m-1)}\right|\right)\right\}\right].
\end{aligned}
$$
In view of (\ref{representation_Ym}), we obtain
$$
\left|Y_t^{(m)}\right| \leq\left|y_t^{(m)}\right|+\sup _{0 \leq s \leq T}\left|L_s(0)\right|+\kappa \sup _{t \leq s \leq T} \mathbb{E}\left[\left|y_s^{(m)}\right|\right].
$$
Set $\widetilde{\alpha}=\sup _{0 \leq s \leq T}\left|L_s(0)\right|+\int_0^T \alpha_s d A_s$. With the help of Jensen’s inequality, we derive from a similar justification as (\ref{jensen01}) that for any $m \geq 1,\ p \geq 2$ and $t \in[0, T]$,
$$
\begin{aligned}
 \mathbb{E}\left[\exp \left\{p\lambda \sup_{s \in[t, T]}\left|Y_s^{(m)}\right|\right\}\right] &\leq e^{p\lambda \sup _{0 \leq s \leq T}{\left|L_s(0)\right|}} \mathbb{E}\left[\exp \left\{p\lambda \sup _{s \in[t, T]}\left|y_s^{(m)}\right|\right\}\right] \mathbb{E}\left[\exp \left\{\kappa p\lambda \sup _{s \in[t, T]}\left|y_s^{(m)}\right|\right\}\right] \\
& \leq e^{p\lambda \sup _{0 \leq s \leq T}\left|L_s(0)\right|} \mathbb{E}\left[\exp \left\{(2+2 \kappa) p\lambda \sup _{s \in[t, T]}\left|y_s^{(m)}\right|\right\}\right] \\
& \leq 4 \mathbb{E}\left[\exp \left\{(2+2 \kappa) p\lambda\left(|\xi|+\widetilde{\alpha}+\beta(A_T-A_t) \sup _{s \in[t, T]}\left|Y_s^{(m-1)}\right|\right)\right\}\right].
\end{aligned}
$$
Considering (H1'), we can select a constant $h \in(0, T]$ depending solely on $\beta$ and $\kappa$ such that
\begin{equation}
\label{h_condition}
T=Nh,\ \max_{1\le i\le N}\left\{(32+64 \kappa) \beta\|A_{ih}-A_{(i-1)h}\|_\infty\right\}<1.
\end{equation}
Applying H\"older’s inequality, we deduce that for any $p \geq 2$,
\begin{equation}
\label{estimate-rho}
\begin{aligned}
& \mathbb{E}\left[\exp \left\{p\lambda \sup _{s \in[T-h, T]}\left|Y_s^{(m)}\right|\right\}\right] \\
& \leq 4 \left(\mathbb{E}[\exp \{(4+4 \kappa) p\lambda(|\xi|+\widetilde{\alpha})\}]\right)^{\frac{1}{2}}\left(\mathbb{E}\left[\exp \left\{(4+4 \kappa) \beta \|A_T-A_{T-h}\|_\infty p\lambda \sup _{s \in[T-h, T]}\left|Y_s^{(m-1)}\right|\right\}\right]\right)^{1/2} \\
& \leq 4 \left(\mathbb{E}[\exp \{(8+8 \kappa) p\lambda|\xi|\}]\right)^{\frac{1}{4}} \left(\mathbb{E}[\exp \{(8+8 \kappa) p \lambda\widetilde{\alpha}\}]\right)^{\frac{1}{4}} \left(\mathbb{E}\left[\exp \left\{p\lambda \sup _{s \in[T-h, T]}\left|Y_s^{(m-1)}\right|\right\}\right]\right)^{(2+2 \kappa) \beta \|A_T-A_{T-h}\|_\infty},
\end{aligned}
\end{equation}
where we use Jensen’s inequality in the last step. Define $\rho=\frac{1}{1-(2+2\kappa)\beta\max_{1\le i\le N}\|A_{ih}-A_{(i-1)h}\|_\infty}$.

If $N=1$, it follows from (\ref{estimate-rho}) that for each $p \geq 2$ and $m \geq 1$,
$$
\begin{aligned}
& \mathbb{E}\left[\exp \left\{p\lambda \sup _{s \in[0, T]}\left|Y_s^{(m)}\right|\right\}\right] \\
& \leq 4 \mathbb{E}[\exp \{(8+8 \kappa) p\lambda|\xi|\}]^{\frac{1}{4}} \mathbb{E}[\exp \{(8+8 \kappa) p\lambda \widetilde{\alpha}\}]^{\frac{1}{4}} \mathbb{E}\left[\exp \left\{p\lambda \sup _{s \in[0, T]}\left|Y_s^{(m-1)}\right|\right\}\right]^{(2+2 \kappa) \beta \|A_T-A_{T-h}\|_\infty}.
\end{aligned}
$$
Iterating the abovementioned procedure $m$ times yields
\begin{equation}
\label{1st iteration 1}
\mathbb{E}\left[\exp \left\{p\lambda \sup _{s \in[0, T]}\left|Y_s^{(m)}\right|\right\}\right] \leq 4^\rho \mathbb{E}[\exp \{(8+8 \kappa) p\lambda|\xi|\}]^{\frac{\rho}{4}} \mathbb{E}[\exp \{(8+8 \kappa) p\lambda \widetilde{\alpha}\}]^{\frac{\rho}{4}},
\end{equation}
which is uniformly bounded with respect to $m$ owing to assumption (H3)(d).

For $N=2$, following the preceding procedure, we conclude for any $p \geq 2$,
\begin{equation}
\label{2nd iteration 1}
\mathbb{E}\left[\exp \left\{p\lambda \sup _{s \in[T-h, T]}\left|Y_s^{(m)}\right|\right\}\right] \leq 4^\rho \mathbb{E}[\exp \{(8+8 \kappa) p\lambda|\xi|\}]^{\frac{\rho}{4}} \mathbb{E}\left[\exp \{(8+8 \kappa) p\lambda \widetilde{\alpha}\}^{\frac{\rho}{4}}.\right.
\end{equation}
Next, we consider the exponential growth mean-reflected BSDE within the time interval $[0, T-h]$ :
$$
\left\{\begin{array}{l}
Y_t^{(m)}=Y_{T-h}^{(m)}+\int_t^{T-h} f\left(s, Y_s^{(m-1)}, U_s^{(m)}\right) d A_s-\int_t^{T-h}\int_E U_s^{(m)}(e)q(dsde)+K_{T-h}^{(m)}-K_t^{(m)}, \\
\mathbb{E}\left[\ell\left(t, Y_t^{(m)}\right)\right] \geq 0, \quad \forall t \in[0, T-h] \text { and } \int_0^{T-h} \mathbb{E}\left[\ell\left(t, Y_{t^-}^{(m)}\right)\right] d K_t^{(m)}=0.
\end{array}\right.
$$
Considering the derivation of (\ref{1st iteration 1}), we infer that
\begin{equation}
\label{2nd iteration 2}
\begin{aligned}
& \mathbb{E}\left[\exp \left\{p\lambda \sup _{s \in[0, T-h]}\left|Y_s^{(m)}\right|\right\}\right] \leq 4^\rho \mathbb{E}\left[\exp \left\{(8+8 \kappa) p\lambda\left|Y_{T-h}^{(m)}\right|\right\}\right]^{\frac{\rho}{4}} \mathbb{E}[\exp \{(8+8 \kappa) p\lambda \widetilde{\alpha}\}]^{\frac{\rho}{4}} \\
& \leq 4^{\rho+\frac{\rho^2}{4}} \mathbb{E}\left[\exp \left\{(8+8 \kappa)^2 p\lambda|\xi|\right\}\right]^{\frac{\rho^2}{16}} \mathbb{E}\left[\exp \left\{(8+8 \kappa)^2 p\lambda \widetilde{\alpha}\right\}\right]^{\frac{\rho^2}{16}} \mathbb{E}[\exp \{(8+8 \kappa) p\lambda \widetilde{\alpha}\}]^{\frac{\rho}{4}},
\end{aligned}
\end{equation}
where we use (\ref{2nd iteration 1}) in the last inequality. By combining (\ref{2nd iteration 1}) with (\ref{2nd iteration 2}) and subsequently applying H\"older’s inequality again, we arrive at the following estimate for any $p \geq 2$
$$
\begin{aligned}
\mathbb{E}\left[\exp \left\{p\lambda \sup _{s \in[0, T]}\left|Y_s^{(m)}\right|\right\}\right] & \leq \mathbb{E}\left[\exp \left\{2 p\lambda \sup _{s \in[0, T-h]}\left|Y_s^{(m)}\right|\right\}\right]^{\frac{1}{2}} \mathbb{E}\left[\exp \left\{2 p\lambda \sup _{s \in[T-h, T]}\left|Y_s^{(m)}\right|\right\}\right]^{\frac{1}{2}} \\
& \leq 4^{\rho+\frac{\rho^2}{8}} \mathbb{E}\left[\exp \left\{(8+8 \kappa)^2 2 p\lambda|\xi|\right\}\right]^{\frac{\rho}{8}+\frac{\rho^2}{32}} \mathbb{E}\left[\exp \left\{(8+8 \kappa)^2 2 p\lambda \widetilde{\alpha}\right\}\right]^{\frac{\rho}{4}+\frac{\rho^2}{32}},
\end{aligned}
$$
which is also uniformly bounded with respect to $m$.

For arbitrary $N$, repeating the abovementioned procedure $N$ times yields
\begin{equation}
\label{Y_space}
\sup _{m \geq 0} \mathbb{E}\left[\exp \left\{p\lambda \sup _{s \in[0, T]}\left|Y_s^{(m)}\right|\right\}\right]<\infty, \ \forall p \geq 1,
\end{equation}
which together with (\ref{representation_Ym}) implies that
$$
\sup _{m \geq 0} \mathbb{E}\left[\exp \left\{p\lambda \sup _{s \in[0, T]}\left|y_s^{(m)}\right|\right\}\right]<\infty,
\ \forall p \geq 1.
$$
\end{proof}

\begin{proof}[Proof of Lemma \ref{lemma_limit estimation}]
Without loss of generality, assume $f(t, y,\cdot)$ is convex. For each fixed $m, q \geq 1$ and $\theta \in(0,1)$, define similarly $\delta_\theta \ell^{(m, q)}, \delta_\theta \widetilde{\ell}^{(m, q)}$ and $\delta_\theta \bar{\ell}^{(m, q)}$ for $y, u$. Then, the pair of processes $\left(\delta_\theta y^{(m, q)}, \delta_\theta u^{(m, q)}\right)$ solves the following BSDE:
\begin{equation}
\label{delta-BSDE}
\delta_\theta y_t^{(m, q)}=\xi+\int_t^T\left(\delta_\theta f^{(m, q)}\left(s, \delta_\theta u_s^{(m, q)}\right)+\delta_\theta f_0^{(m, q)}(s)\right) d A_s-\int_t^T \int_E\delta_\theta u_s^{(m, q)}(e)q(dsde),
\end{equation}
where the generator is given by
$$
\begin{aligned}
& \delta_\theta f_0^{(m, q)}(t)=\frac{1}{1-\theta}\left(f\left(t, Y_t^{(m+q-1)}, u_t^{(m+q)}\right)-f\left(t, Y_t^{(m-1)}, u_t^{(m+q)}\right)\right), \\
& \delta_\theta f^{(m, q)}(t, u)=\frac{1}{1-\theta}\left(-\theta f\left(t, Y_t^{(m-1)}, u_t^{(m)}\right)+f\left(t, Y_t^{(m-1)},(1-\theta) u+\theta u_t^{(m)}\right)\right).
\end{aligned}
$$
From assumptions (H3)(c) and (H3)(e), we deduce
$$
\begin{aligned}
& \delta_\theta f_0^{(m, q)}(t) \leq \beta\left(\left|Y_t^{(m-1)}\right|+\left|\delta_\theta Y_t^{(m-1, q)}\right|\right), \\
& \delta_\theta f^{(m, q)}(t, u) \leq f\left(t, Y_t^{(m-1)}, u\right) \leq \alpha_t+\beta\left(\left|Y_t^{(m-1)}\right|\right)+\frac{1}{\lambda}j_\lambda(t, u).
\end{aligned}
$$
For any $m,q\geq 1$, denote
$$
\begin{aligned}
& \zeta^{(m,q)}=|\xi|+\int_0^T \alpha_s d A_s+\beta A_T\left(\sup _{s \in[0, T]}\left|Y_s^{(m-1)}\right|+\sup _{s \in[0, T]}\left|Y_s^{(m+q-1)}\right|\right), \\
& \chi^{(m, q)}=\int_0^T \alpha_s d A_s+2 \beta A_T\left(\sup _{s \in[0, T]}\left|Y_s^{(m+q-1)}\right|+\sup _{s \in[0, T]}\left|Y_s^{(m-1)}\right|\right).
\end{aligned}
$$
Applying assertion (ii) of Lemma \ref{a priori esti 2} to (\ref{delta-BSDE}) yields for any $p \geq 1$,
$$
\exp \left\{p\lambda\left(\delta_\theta y_t^{(m, q)}\right)^{+}\right\} \leq \mathbb{E}_t\left[ \exp \left\{
p\lambda\left(|\xi|+\chi^{(m, q)}+\beta(A_T-A_t)\sup _{s \in[t, T]}\left|\delta_\theta Y_s^{(m-1, q)}\right|\right)\right\}\right],
$$
and in the same manner, we also have
$$
\exp \left\{p\lambda\left(\delta_\theta \widetilde{y}_t^{(m, q)}\right)^{+}\right\}
\leq \mathbb{E}_t\left[\exp \left\{p\lambda\left(|\xi|+\chi^{(m, q)}+\beta(A_T-A_t)\sup _{s \in[t, T]}\left|\delta_\theta \widetilde{Y}_s^{(m-1, q)}\right|\right)\right\}\right].
$$
By the fact that
$$
\left(\delta_\theta y^{(m, q)}\right)^{-} \leq\left(\delta_\theta \widetilde{y}^{(m, q)}\right)^{+}+2\left|y^{(m+q)}\right| \text { and }\left(\delta_\theta \widetilde{y}^{(m, q)}\right)^{-} \leq\left(\delta_\theta y^{(m, q)}\right)^{+}+2\left|y^{(m)}\right|,
$$
we derive, applying H\"older’s inequality, that
$$
\begin{aligned}
& \exp \left\{p\lambda\left|\delta_\theta y_t^{(m, q)}\right|\right\} \vee \exp \left\{p\lambda\left|\delta_\theta \widetilde{y}_t^{(m, q)}\right|\right\} \\
& \leq \exp \left\{p\lambda\left(\left(\delta_\theta y_t^{(m, q)}\right)^{+}+\left(\delta_\theta \widetilde{y}_t^{(m, q)}\right)^{+}+2\left|y_t^{(m)}\right|+2\left|y_t^{(m+q)}\right|\right)\right\} \\
& \leq \mathbb{E}_t\left[\exp \left\{p\lambda\left(|\xi|+\chi^{(m, q)}+\beta(A_T-A_t)\sup _{s \in[t, T]} \delta_\theta \bar{Y}_s^{(m-1, q)}\right)\right\}\right]^2 \\
& \quad \times \exp \left\{2 p\lambda\left(\left|y_t^{(m)}\right|+\left|y_t^{(m+q)}\right|\right)\right\} \\
& \leq \mathbb{E}_t\left[\exp \left\{p\lambda\left(|\xi|+\chi^{(m, q)}+\beta(A_T-A_t)\left(\sup _{s \in[t, T]} \delta_\theta \bar{Y}_s^{(m-1, q)}\right)\right)\right\}\right]^2 \\
& \quad \times \mathbb{E}_t\left[\exp \left\{4 p\lambda \zeta^{(m, q)}\right\}\right].
\end{aligned}
$$
Considering Doob’s maximal inequality and H\"older’s inequality, we conclude that for all $p>1$ and $t \in[0, T]$,
$$
\begin{aligned}
& \mathbb{E}\left[\exp \left\{p\lambda \sup _{s \in[t, T]} \delta_\theta \bar{y}_s^{(m, q)}\right\}\right] \\
& \leq 4 \mathbb{E}\left[\exp \left\{8 p\lambda\left(|\xi|+\chi^{(m, q)}+\beta(A_T-A_t)\left(\sup _{s \in[t, T]} \delta_\theta \bar{Y}_s^{(m-1, q)}\right)\right)\right\}\right]^{\frac{1}{2}} \\
& \quad \times \mathbb{E}\left[\exp \left\{16 p\lambda \zeta^{(m, q)}\right\}\right]^{\frac{1}{2}}.
\end{aligned}
$$
Let $C_2:=\sup _{0 \leq s \leq T}\left|L_s(0)\right|+2 \kappa \sup _m \mathbb{E}\left[\sup _{s \in[0, T]}\left|y_s^{(m)}\right|\right]<\infty$. According to assumption (H6) and (\ref{representation_Ym}),
$$
\delta_\theta \bar{Y}_t^{(m, q)} \leq \delta_\theta \bar{y}_t^{(m, q)}+2 \kappa \sup _{t \leq s \leq T} \mathbb{E}\left[\delta_\theta \bar{y}_t^{(m, q)}\right]+2 C_2,
$$
which with Jensen’s inequality implies that for each $p \geq 1$ and $t \in[0, T]$,
$$
\begin{aligned}
&\mathbb{E}\left[\exp \left\{p\lambda \sup _{s \in[t, T]} \delta_\theta \bar{Y}_s^{(m, q)}\right\}\right] \leq e^{2 p\lambda C_2} \mathbb{E}\left[\exp \left\{(2+4 \kappa) p\lambda \sup _{s \in[t, T]} \delta_\theta \bar{y}_s^{(m, q)}\right\}\right] \\
&\leq 4 \mathbb{E}\left[\exp \left\{(16+32 \kappa) p\lambda\left(|\xi|+\chi^{(m, q)}+C_2+\beta(A_T-A_t) \sup _{s \in[t, T]} \delta_\theta \bar{Y}_s^{(m-1, q)}\right)\right\}\right]^{\frac{1}{2}}
\mathbb{E}\left[\exp \left\{(32+64 \kappa) p\lambda \zeta^{(m, q)}\right\}\right]^{\frac{1}{2}}.
\end{aligned}
$$
Choosing $h$ as described in (\ref{h_condition}), we obtain from H\"older’s inequality and Jensen’s inequality that
\begin{equation}
\label{iter-1}
\begin{aligned}
& \mathbb{E}\left[\exp \left\{p\lambda \sup _{s \in[T-h, T]} \delta_\theta \bar{Y}_s^{(m, q)}\right\}\right] \\
& \leq 4 \mathbb{E}[\exp \{(64+128 \kappa) p\lambda|\xi|\}]^{\frac{1}{8}} \mathbb{E}\left[\exp \left\{(64+128 \kappa) p\lambda\left(\chi^{(m, q)}+C_2\right)\right\}\right]^{\frac{1}{8}} \\
& \quad \times \mathbb{E}\left[\exp \left\{(32+64 \kappa) p \lambda\zeta^{(m, q)}\right\}\right]^{\frac{1}{2}} \mathbb{E}\left[\exp \left\{p\lambda \sup _{s \in[T-h, T]} \delta_\theta \bar{Y}_s^{(m-1, q)}\right\}\right]^{(8+16 \kappa) \beta \|A_T-A_{T-h}\|_\infty}.
\end{aligned}
\end{equation}
Set $\widetilde{\rho}=\frac{1}{1-(8+16 \kappa) \beta \max_{1\le i\le N}\|A_{ih}-A_{(i-1)h}\|_\infty}$.

If $N=1$, it follows from (\ref{iter-1}) that for each $p \geq 1$ and $m, q \geq 1$,
\begin{equation}
\label{iter1-1}
\begin{aligned}
& \mathbb{E}\left[\exp \left\{p \lambda\sup _{s \in[0, T]} \delta_\theta \bar{Y}_s^{(m, q)}\right\}\right] \\
& \leq 4^{\tilde{\rho}} \mathbb{E} {[\exp \{(64+128 \kappa) p\lambda|\xi|\}]^{\frac{\bar{\rho}}{8}} \sup _{m, q \geq 1} \mathbb{E}\left[\exp \left\{(64+128 \kappa) p\left(\chi^{(m, q)}+C_2\right)\right\}\right]^{\frac{\bar{\rho}}{8}} } \\
& \quad \times \sup _{m, q \geq 1} \mathbb{E}\left[\exp \left\{(32+64 \kappa) p \lambda\zeta^{(m, q)}\right\}\right]^{\frac{\bar{\rho}}{2}} \mathbb{E}\left[\exp \left\{p\lambda \sup _{s \in[0, T]} \delta_\theta \bar{Y}_s^{(1, q)}\right\}\right]^{(8 \beta \|A_T-A_{T-h}\|_\infty+16 \kappa \beta \|A_T-A_{T-h}\|_\infty)^{m-1}}.
\end{aligned}
\end{equation}
 Lemma \ref{lemma_uniform estimate} insures that for any $\theta \in(0,1)$
$$
\lim _{m \rightarrow \infty} \sup _{q \geq 1} \mathbb{E}\left[\exp \left\{p\lambda \sup _{s \in[0, T]} \delta_\theta \bar{Y}_s^{(1, q)}\right\}\right]^{(8 \beta \|A_T-A_{T-h}\|_\infty+16 \kappa \beta \|A_T-A_{T-h}\|_\infty)^{m-1}}=1,
$$
which implies that
$$
\begin{aligned}
\sup _{\theta \in(0,1)} & \lim _{m \rightarrow \infty} \sup _{q \geq 1} \mathbb{E}\left[\exp \left\{p\lambda \sup _{s \in[0, T]} \delta_\theta \bar{Y}_s^{(m, q)}\right\}\right] \\
& \leq 4^{\tilde{\rho}} \mathbb{E}[\exp \{(64+128 \kappa) p\lambda|\xi|\}]^{\frac{\bar{\rho}}{8}} \sup _{m, q \geq 1} \mathbb{E}\left[\exp \left\{(64+128 \kappa) p\lambda\left(\chi^{(m, q)}+C_2\right)\right\}\right]^{\frac{\bar{\rho}}{8}} \\
& \quad \times \sup _{m, q \geq 1} \mathbb{E}\left[\exp \left\{(32+64 \kappa) p\lambda \zeta^{(m, q)}\right\}\right]^{\frac{\bar{\rho}}{2}}<\infty.
\end{aligned}
$$
If $N=2$, inherited from the derivation of (\ref{iter1-1}), for any $p \geq 1$,
$$
\begin{aligned}
& \mathbb{E}\left[\exp \left\{p\lambda \sup _{s \in[0, T]} \delta_\theta \bar{Y}_s^{(m, q)}\right\}\right] \\
& \leq 4^{\tilde{\rho}+\frac{\bar{\rho}^2}{16}} \mathbb{E}\left[\exp \left\{(64+128 \kappa)^2 2 p\lambda|\xi|\right\}\right]^{\frac{\bar{\rho}}{16}+\frac{\bar{\rho}^2}{128}} \sup _{m, q \geq 1} \mathbb{E}\left[\exp \left\{(64+128 \kappa)^2 2 p\lambda\left(\chi^{(m, q)}+C_2\right)\right\}\right]^{\frac{\bar{\rho}}{8}+\frac{\bar{\rho}^2}{128}} \\
& \quad \times \sup _{m, q \geq 1} \mathbb{E}\left[\exp \left\{(32+64 \kappa)(64+128 \kappa) 2 p\lambda \zeta^{(m, q)}\right\}\right]^{\frac{\bar{\rho}}{2}+\frac{\bar{\rho}^2}{32}} \\
& \quad \times \mathbb{E}\left[\exp \left\{(64+128 \kappa) 2 p\lambda \sup _{s \in[0, T]} \delta_\theta \bar{Y}_s^{(1, q)}\right\}\right]^{\left(\frac{1}{2}+\frac{\tilde{\rho}}{16}\right)(8 \beta \max_{i=1,2}\|A_{ih}-A_{(i-1)h}\|_\infty+16\kappa \beta \max_{i=1,2}\|A_{ih}-A_{(i-1)h}\|_\infty)^{m-1}},
\end{aligned}
$$
which also implies the desired outcome when $N=2$. For general $N$, repeating the abovementioned procedure $N$ times concludes the proof.
\end{proof}

\end{appendices}

\noindent\textbf{Acknowledgements}

The authors thank the anonymous referees and editors for their valuable advice
and insightful comments, which greatly helped improve the previous version of this paper. This work was partially supported by
NSFC under project No. 12371473 and by the Tianyuan Fund for Mathematics of NSFC under project No. 12326603.

\bibliographystyle{plain}
\bibliography{RBSDEJ}
\end{document}